\newtheorem{thm}{Theorem}[section]
\newtheorem{lem}[thm]{Lemma}
\newtheorem{pro}[thm]{Proposition}
\newtheorem{cor}[thm]{Corollary}
\numberwithin{equation}{section}
\newcommand{\id}{\mathrm{id}}
\newcommand{\op}{\mathrm{op}}
\newcommand{\ad}{\mathrm{ad}}
\newcommand{\tr}{\mathrm{tr}}
\newcommand{\rad}{\mathrm{Rad}}
\newcommand{\der}{\mathrm{Der}}
\newcommand{\ider}{\mathrm{Ider}}
\newcommand{\out}{\mathrm{Out}}
\newcommand{\lie}{\mathrm{Lie}}
\newcommand{\leib}{\mathrm{Leib}}
\newcommand{\sym}{\mathrm{sym}}
\newcommand{\bi}{\mathrm{bi}}
\newcommand{\Ker}{\mathrm{Ker}}
\newcommand{\im}{\mathrm{Im}}
\newcommand{\mult}{\mathrm{Mult}}
\newcommand{\ann}{\mathrm{Ann}}
\newcommand{\End}{\mathrm{End}}
\newcommand{\Hom}{\mathrm{Hom}}
\newcommand{\HL}{\mathrm{HL}}
\newcommand{\HCE}{\mathrm{H}}
\newcommand{\CL}{\mathrm{C}}
\newcommand{\N}{\mathbb{N}}
\newcommand{\F}{\mathbb{F}}
\newcommand{\C}{\mathbb{C}}
\newcommand{\af}{\mathfrak{A}}
\newcommand{\rf}{\mathfrak{R}}
\newcommand{\Sf}{\mathfrak{S}}
\newcommand{\ssf}{\mathfrak{s}}
\newcommand{\tf}{\mathfrak{T}}
\newcommand{\Bf}{\mathfrak{B}}
\newcommand{\lf}{\mathfrak{L}}
\newcommand{\If}{\mathfrak{I}}
\newcommand{\IIf}{\mathfrak{i}}
\newcommand{\jf}{\mathfrak{J}}
\newcommand{\kf}{\mathfrak{K}}
\newcommand{\nf}{\mathfrak{N}}
\newcommand{\ef}{\mathfrak{E}}
\newcommand{\slf}{\mathfrak{sl}}
\newcommand{\gl}{\mathfrak{gl}}
\begin{document}


\title[Leibniz algebras as non-associative algebras]{Leibniz algebras as non-associative algebras}

\author{J\"org Feldvoss}
\address{Department of Mathematics and Statistics, University of South Alabama,
Mobile, AL 36688-0002, USA}
\email{jfeldvoss@southalabama.edu}

\subjclass[2010]{Primary 17A32; Secondary 17A05, 17A20, 17A60, 17A65, 17A99}

\keywords{Non-associative algebra, derived series, solvable algebra, radical, Lie
multiplication algebra, nilpotent algebra, flexible algebra, power-associative algebra,
nil algebra, opposite algebra, Leibniz algebra, center, Leibniz kernel, Leibniz module,
Engel's theorem, Lie's theorem, trace form, invariant symmetric bilinear form, Killing
form, minimally degenerate, Cartan's solvability criterion, extension of Leibniz algebras,
Leibniz bimodule, Leibniz cohomology, decendent central series, simple Leibniz algebra,
Lie-simple Leibniz algebra, semisimple Leibniz algebra, first Whitehead lemma, second
Whitehead lemma}


\begin{abstract}
In this paper we define the basic concepts for left or right Leibniz algebras and prove
some of the main results. Our proofs are often variations of the known proofs but
several results seem to be new.
\end{abstract}


\date{October 17, 2018}
          
\maketitle


\section*{Introduction}


This paper is mainly a survey on (left or right) Leibniz algebras from the point of view of
the theory of non-associative (i.e., not necessarily associative) algebras, but there are
also several new results. Leibniz algebras were first introduced by Bloh in the mid sixties
of the last century (see \cite{Bl1,Bl2,Bl3}) and then forgotten for nearly thirty years. In
the early 1990's  they were rediscovered by Loday who together with his students and
collaborators developed much of the theory of Leibniz algebras, Leibniz bimodules, and
Leibniz cohomology (see, for example, \cite{L,LP1,C}). A left (resp.\ right) Leibniz algebra
is a vector space with a multiplication for which every left (resp.\ right) multiplication operator
is a derivation (i.e., a linear operator satisfying the usual Leibniz product rule). As such
Leibniz algebras are non-anticommutative versions of Lie algebras. In particular, Leibniz
algebras are examples of non-associative algebras (see \cite{S}). In contrast to other
papers on this topic, we study Leibniz algebras exclusively from this point of view. We
have tried to make the paper sufficiently self-contained so that it could serve as a first
introduction to Leibniz algebras, their modules (or representations), and their cohomology.
Leibniz algebras play an important role in different areas of mathematics and physics (see
\cite{L}). In the last three decades numerous papers on Leibniz algebras appeared and
many results have been duplicated. In this paper we develop the basics of the theory of
Leibniz algebras in a systematic way by considering them as a special class of non-associative
algebras. In the following we will describe the contents of the paper in more detail.

The first section is devoted to some background material on non-associative algebras which
will be useful for the rest of the paper. In particular, we introduce the concept of a radical
of an arbitrary algebra and prove three of its properties that can be used as axioms for
such a concept. We refer the reader to \cite{S} for more details and most of the proofs.

In the second section we give the definition of left resp.\ right Leibniz algebras and prove
some basic results. We use several low-dimensional Leibniz algebras to illustrate the concepts
to be introduced and the results to be proved. Among other notions, we define the (left/right)
center and the Leibniz kernel of a Leibniz algebra. The latter measures how much a Leibniz
algebra deviates from being a Lie algebra. Moreover, we associate several Lie algebras to
a Leibniz algebra and discuss how these are related to each other. We include an example
showing that left or right Leibniz algebras are not necessarily power-associative. This falsifies
a claim by Barnes in \cite{B1}. On the other hand, we prove that symmetric Leibniz algebras
(i.e., algebras satisfying the left and the right Leibniz identity) are flexible, power-associative,
and nil. The terms of the derived series of an arbitrary algebra are usually only subalgebras.
At the end of the second section we show that for left or right Leibniz algebras each term
of their derived series is an ideal.

Section 3 contains definitions of left Leibniz modules and Leibniz bimodules of a left Leibniz
algebra. In particular, following Eilenberg \cite{E} we motivate the defining identities of
a Leibniz bimodule by considering abelian extensions of a left Leibniz algebra. We prove
some basic properties of Leibniz bimodules following mainly Loday \cite{L,LP1} who
introduced and investigated Leibniz bimodules for a right Leibniz algebra. In addition, we
also briefly discuss trace forms associated to finite-dimensional left Leibniz modules (see
also \cite{AAO2} and \cite{DMS1}). Similarly to the Leibniz kernel of a Leibniz algebra, we
introduce the anti-symmetric kernel of a Leibniz bimodule. Using this concept, we give a
very short ``Schur's lemma type" proof of the fact that irreducible Leibniz bimodules are
either symmetric or anti-symmetric (see also the proof of Theorem~3.1 in \cite{FM}). Note
that this proof neither needs to assume that the Leibniz algebra is finite dimensional nor
that the Leibniz bimodule is finite dimensional as in \cite[Theorem 1.4]{B2}). In the fourth
section we define the cohomology of a left Leibniz algebra in analogy to the cohomology
of a right Leibniz algebra in \cite{L,LP1}, and describe the Leibniz cohomology spaces in
degree 0 and 1. We also show explicitly how Leibniz $2$-cocycles give rise to abelian
extensions of left Leibniz algebras.

The remaining three sections of the paper are devoted to several results for nilpotent,
solvable, and semisimple Leibniz algebras, respectively. We give variants of the known
proofs of Engel's and Lie's theorem for Leibniz algebras as well as derive some of their
applications. Furthermore, we prove Cartan's solvability criterion for Leibniz algebras.
We characterize the nilpotency and solvability of a Leibniz algebra in terms of the nilpotency
and solvability of their associated Lie algebras, respectively. These results seem to be
new. In the last section we derive some structural properties of (semi)simple Leibniz
algebras and explain that the first Whitehead lemma does not hold for Leibniz algebras.
In a previous version of this paper we derived the second Whitehead lemma for
Leibniz algebras from Levi's theorem for Leibniz algebras along the lines of the proof
of \cite[Proposition 3.22]{S}. Unfortunately, our proof was not correct. We are very
grateful to Bakhrom Omirov for bringing this to our attention.\footnote{Recently, in
joint work with Friedrich Wagemann we found a proof of the second Whitehead lemma
for Leibniz algebras by using spectral sequences.}

In this paper all algebras are defined over a field. For a subset $X$ of a vector space
$V$ over a field $\F$ we let $\langle X\rangle_\F$ be the subspace of $V$ that is spanned
by $X$. We use $[-,-]$ to denote the commutator of linear operators or matrices.
The identity function on a set $S$ will be denoted by $\id_S$, the set of non-negative
integers will be denoted by $\N_0$, and the set of positive integers will be denoted by
$\N$.


\section{Non-associative algebras}


In this section we briefly recall some of the definitions and results on non-associative
algebras that we will need in the remainder of the paper. For more details and most of the
proofs we refer the reader to \cite{S}.

An {\em algebra\/} $\af$ is a vector space over a field with a bilinear mapping $\af\times
\af\to\af$, $(x,y)\mapsto xy$, the {\em multiplication\/} of $\af$. The usual definitions of
the concepts of {\em subalgebra\/}, {\em left\/} or {\em right ideal\/}, {\em ideal\/} (=
left and right ideal), {\em homomorphism\/}, {\em isomorphism\/}, etc., are the same as
for associative algebras since they do not use the associativity of the multiplication. Moreover,
the {\em fundamental homomorphism theorem\/}, the {\em isomorphism theorems\/}, and
the {\em correspondence theorems for subalgebras\/} and for {\em one-sided\/} or {\em
two-sided ideals\/}, respectively, continue to hold with the same proofs as in the associative
case.

Let $\Sf$ and $\tf$ be two non-empty subsets of an algebra $\af$ over a field $\F$. Then
$$\Sf\tf:=\langle st\mid s\in\Sf,t\in\tf\rangle_\F$$ is the $\F$-subspace of $\af$ spanned
by the products $st$. In particular, $\Sf^2:=\Sf\Sf$ (see \cite[p.\ 9]{S}).

The {\em derived series\/} of subalgebras $$\af^{(0)}\supseteq\af^{(1)}\supseteq\af^{(2)}
\supseteq\cdots$$ of $\af$ is defined recursively by $\af^{(0)}:=\af$ and $\af^{(n+1)}:=
(\af^{(n)})^2$ for every non-negative integer $n$.

Note that $\af^{(m+n)}=(\af^{(m)})^{(n)}$ for all non-negative integers $m$ and $n$.
Moreover, if $\phi:\af\to\Bf$ is a homomorphism of algebras, then $\phi(\af^{(n)})=\phi
(\af)^{(n)}$ for every non-negative integer $n$.

An algebra $\af$ is called {\em solvable\/} if $\af^{(r)}=0$ for some non-negative integer
$r$. An algebra $\af$ is called {\em abelian\/} if $\af\af=0$ (i.e., if any product of elements
in $\af$ is zero).

The next result is an immediate consequence of the compatibility of homomorphisms of algebras
with the derived series.

\begin{pro}\label{subalghomimsolv}
Subalgebras and homomorphic images of solvable algebras are solvable.
\end{pro}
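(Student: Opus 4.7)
The plan is to treat the two assertions separately, exploiting the recursive definition $\af^{(n+1)}=(\af^{(n)})^2$ and the compatibility properties with subalgebras and homomorphisms that were noted just before the statement.

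For the subalgebra case, suppose $\Bf$ is a subalgebra of a solvable algebra $\af$, so that $\af^{(r)}=0$ for some $r\in\N_0$. I would prove by induction on $n$ that $\Bf^{(n)}\subseteq\af^{(n)}$ for every $n\in\N_0$. The base case $n=0$ is just $\Bf\subseteq\af$. For the inductive step, assuming $\Bf^{(n)}\subseteq\af^{(n)}$, the definition of $\Sf\tf$ as the span of products together with the bilinearity of the multiplication yield
$$\Bf^{(n+1)}=(\Bf^{(n)})^2\subseteq(\af^{(n)})^2=\af^{(n+1)}.$$
Taking $n=r$ then forces $\Bf^{(r)}\subseteq\af^{(r)}=0$, so $\Bf$ is solvable.

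For the homomorphic image case, let $\phi:\af\to\Bf$ be a homomorphism with $\af$ solvable, say $\af^{(r)}=0$. The compatibility relation $\phi(\af^{(n)})=\phi(\af)^{(n)}$ for every $n\in\N_0$, which was recorded immediately before the proposition, gives
$$\phi(\af)^{(r)}=\phi(\af^{(r)})=\phi(0)=0,$$
so $\phi(\af)$ is solvable. Since every homomorphic image of $\af$ is isomorphic to $\phi(\af)$ for some such $\phi$, this suffices.

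There is no real obstacle here; the entire argument is essentially a two-line application of the recursive definition plus the already-stated compatibility formula, which is why the paper itself introduces the proposition as an ``immediate consequence.'' The only minor subtlety is to be careful in the inductive step that the containment of subspaces $\Bf^{(n)}\subseteq\af^{(n)}$ automatically entails containment of the spans of their products, but this is immediate from the definition of $\Sf\tf$ as an $\F$-linear span.
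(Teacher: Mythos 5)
Your proposal is correct and matches the paper's (unwritten) argument: the paper dismisses the proposition as an immediate consequence of the compatibility of homomorphisms with the derived series, and your two inductions --- $\Bf^{(n)}\subseteq\af^{(n)}$ for subalgebras and $\phi(\af^{(n)})=\phi(\af)^{(n)}$ for images --- are exactly the standard details being suppressed. Nothing is missing.
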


The following results are well-known (see \cite[Proposition 2.2 and 2.3]{S}).

\begin{pro}\label{extsolv}
Extensions of solvable algebras by solvable algebras are solvable.
\end{pro}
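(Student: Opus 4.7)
The plan is to combine the two recalled formal properties of the derived series stated in the paragraph just before Proposition~\ref{subalghomimsolv}: the compatibility with homomorphisms $\phi(\af^{(n)})=\phi(\af)^{(n)}$, and the indexing identity $\af^{(m+n)}=(\af^{(m)})^{(n)}$. Concretely, I interpret ``extension of solvable by solvable'' as an algebra $\af$ possessing an ideal $\If$ such that both $\If$ and $\af/\If$ are solvable, and I want to produce a single integer $r$ with $\af^{(r)}=0$.

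First I would choose $s$ with $(\af/\If)^{(s)}=0$, which exists by the solvability of $\af/\If$. Applying the compatibility property to the canonical projection $\pi\colon\af\to\af/\If$ gives $\pi(\af^{(s)})=\pi(\af)^{(s)}=(\af/\If)^{(s)}=0$, so $\af^{(s)}\subseteq\If$. Next I would pick $t$ with $\If^{(t)}=0$, using the solvability of $\If$.

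The final step is to use the indexing identity: $\af^{(s+t)}=(\af^{(s)})^{(t)}$. Since $\af^{(s)}$ is a subalgebra of $\If$, an easy induction on $n$ (already implicit in Proposition~\ref{subalghomimsolv}) shows $(\af^{(s)})^{(n)}\subseteq\If^{(n)}$ for every $n$, so in particular $(\af^{(s)})^{(t)}\subseteq\If^{(t)}=0$. Hence $\af^{(s+t)}=0$ and $\af$ is solvable.

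There is no real obstacle here; the proof is essentially a bookkeeping argument, and the only thing to be careful about is that all the ingredients (compatibility with the projection, the additivity of the index, and monotonicity of the derived series under inclusion of subalgebras) have indeed been established earlier in the excerpt and apply to an arbitrary non-associative algebra without any further hypotheses.
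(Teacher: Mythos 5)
Your argument is correct and complete: it is the standard proof of this fact, which the paper itself does not spell out but delegates to \cite[Proposition 2.2]{S}, where essentially the same bookkeeping with $\af^{(s)}\subseteq\If$ and $\af^{(s+t)}=(\af^{(s)})^{(t)}\subseteq\If^{(t)}=0$ is carried out. All three ingredients you invoke (compatibility of the derived series with the quotient map, additivity of the index, and monotonicity under inclusion) are available for arbitrary non-associative algebras, so there is no gap.
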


\begin{pro}\label{sumidsolv}
The sum of two solvable ideals of an algebra is solvable.
\end{pro}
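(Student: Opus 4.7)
The plan is to reduce this to the previous two propositions via the second isomorphism theorem, which the excerpt explicitly asserts continues to hold in the non-associative setting.

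Let $\If$ and $\jf$ be two solvable ideals of an algebra $\af$. First I would note that $\If+\jf$ is again an ideal of $\af$ (this is immediate from the bilinearity of the multiplication and the fact that a sum of ideals is an ideal, a statement that does not use associativity). Moreover, $\jf$ is an ideal of $\If+\jf$ since it is already an ideal of the larger algebra $\af$, and $\If\cap\jf$ is an ideal of $\If$ for the same reason.

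Next, I would invoke the second isomorphism theorem (valid here by the general remarks on non-associative algebras in Section~1) to obtain
$$(\If+\jf)/\jf\;\cong\;\If/(\If\cap\jf).$$
The right-hand side is a homomorphic image of the solvable ideal $\If$, hence is solvable by Proposition~\ref{subalghomimsolv}. Therefore $(\If+\jf)/\jf$ is solvable, and $\jf$ is solvable by hypothesis. Applying Proposition~\ref{extsolv} to the short exact sequence $0\to\jf\to\If+\jf\to(\If+\jf)/\jf\to 0$ yields that $\If+\jf$ is solvable, as desired.

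There is no real obstacle here beyond bookkeeping: the only thing to be careful about is that we are using the non-associative versions of the isomorphism theorems and of ``extension of solvable by solvable is solvable,'' both of which are explicitly available in the paper. The argument is therefore essentially the same as in the associative or Lie case, and it does not require any finiteness assumption on $\af$.
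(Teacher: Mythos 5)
Your proof is correct. The paper itself gives no proof of this proposition, citing \cite[Proposition 2.3]{S} instead, and the argument there is exactly the one you give: the second isomorphism theorem yields $(\If+\jf)/\jf\cong\If/(\If\cap\jf)$, which is solvable as a homomorphic image of $\If$ by Proposition \ref{subalghomimsolv}, and then Proposition \ref{extsolv} applied to the extension of $(\If+\jf)/\jf$ by $\jf$ finishes the job, so your route coincides with the one the paper implicitly endorses.
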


Let $\af$ be a finite-dimensional algebra, and let $\rf$ be a solvable ideal of maximal dimension.
If $\If$ is any solvable ideal of $\af$, then it follows from Proposition \ref{sumidsolv} that $\rf+
\If$ is a solvable ideal of $\af$. Since $\rf\subseteq\rf+\If$, for dimension reasons we have that
$\rf=\rf+\If$, and thus $\If\subseteq\rf$. This shows that $\rf$ is the largest solvable ideal of
$\af$. The ideal $\rf$ is called the {\em radical\/} of the algebra $\af$ and will be denoted by
$\rad(\af)$.

\begin{pro}\label{rad}
Every finite-dimensional algebra $\af$ contains a largest solvable ideal $\rad(\af)$ satisfying the
following properties:
\begin{enumerate}
\item[(a)] $\rad(\rad(\af))=\rad(\af)$.
\item[(b)] $\rad(\af/\rad(\af))=0$.
\item[(c)] If $\phi:\af\to\Bf$ is a homomorphism of algebras, then $\phi(\rad(\af))\subseteq\rad
                (\phi(\af))$.
\end{enumerate}
\end{pro}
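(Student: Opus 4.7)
The plan is to verify each of the three properties separately, using the fact that $\rad(\af)$ has already been shown to be the largest solvable ideal of $\af$, together with Propositions \ref{subalghomimsolv}, \ref{extsolv}, and \ref{sumidsolv}.

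Property (a) is essentially tautological. Since $\rad(\af)$ is by definition a solvable algebra, $\rad(\af)$ is itself a solvable ideal of $\rad(\af)$ of maximal possible dimension, so $\rad(\rad(\af))=\rad(\af)$.

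For property (b) I argue by contradiction. Suppose that $\rad(\af/\rad(\af))\neq 0$. By the correspondence theorem for ideals, this radical has the form $\jf/\rad(\af)$ for some ideal $\jf$ of $\af$ that strictly contains $\rad(\af)$. Now $\jf/\rad(\af)$ is solvable and $\rad(\af)$ is solvable, so Proposition \ref{extsolv} implies that $\jf$ is itself a solvable ideal of $\af$. This contradicts the maximality of $\rad(\af)$, so $\rad(\af/\rad(\af))=0$.

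For property (c) I observe that $\phi(\rad(\af))$ is an ideal of $\phi(\af)$, being the image of an ideal of $\af$ under the surjective homomorphism $\af\to\phi(\af)$ obtained by restricting the codomain of $\phi$. As a homomorphic image of the solvable algebra $\rad(\af)$, this ideal is solvable by Proposition \ref{subalghomimsolv}. Since $\phi(\af)$ is finite dimensional (being a homomorphic image of the finite-dimensional algebra $\af$), its radical $\rad(\phi(\af))$ exists and contains every solvable ideal, so $\phi(\rad(\af))\subseteq\rad(\phi(\af))$.

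There is no real obstacle to the proof; the only small subtlety is to remember to combine the correspondence theorem with Proposition \ref{extsolv} in step (b), and to restrict the codomain of $\phi$ in step (c) so that Proposition \ref{subalghomimsolv} can be applied inside the finite-dimensional algebra $\phi(\af)$.
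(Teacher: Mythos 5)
Your proof is correct and follows essentially the same route as the paper: (a) from the solvability of $\rad(\af)$, (b) via the correspondence theorem combined with Proposition \ref{extsolv} (you phrase it as a contradiction, the paper concludes directly that the preimage ideal lies in $\rad(\af)$, but the content is identical), and (c) from Proposition \ref{subalghomimsolv} applied to the image. The extra care you take in (c) — restricting the codomain so that $\phi(\rad(\af))$ is an ideal of the finite-dimensional algebra $\phi(\af)$ — is exactly the detail the paper leaves implicit.
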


\begin{proof}
(a) follows from the solvability of $\rad(\af)$, and (c) is an immediate consequence of Proposition
\ref{subalghomimsolv}.

(b): According to the correspondence theorem for ideals, there exists an ideal $\If$ of $\af$ such
that $\rad(\af)\subseteq\If$ and $\rad(\af/\rad(\af))=\If/\rad(\af)$. This implies that $\If/\rad(\af)$
is solvable, and therefore Proposition \ref{extsolv} yields that $\If$ is solvable. Hence $\If\subseteq
\rad(\af)$, and so $\rad(\af/\rad(\af))=0$.
\end{proof}

Let $a\in\af$ be an arbitrary element. Then the left multiplication operator $L_a:\af\to\af$, $x
\mapsto ax$ is linear and $$L(\af):=\{L_a\mid a\in\af\}$$ is a subspace of the associative algebra
$\End(\af)$ of linear operators on $\af$. Similarly, the right multiplication operator $R_a:\af\to
\af$, $x\mapsto xa$ is linear and $$R(\af):=\{R_a\mid a\in\af\}$$ is a subspace of $\End(\af)$.
Let $\mult(\af)$ denote the subalgebra of $\End(\af)$ that is generated by $L(\af)\cup R(\af)$,
the {\em associative multiplication algebra\/} of $\af$ (see \cite[Section 2 in Chapter II]{S}). Let
$\gl(\af)$ denote the {\em general linear Lie algebra\/} on the underlying vector
space of $\af$ with the commutator $[X,Y]:=X\circ Y-Y\circ X$ as Lie bracket. Moreover, let $\lie
(\af)$ denote the subalgebra of $\gl(\af)$ that is generated by $L(\af) \cup R(\af)$, the {\em
Lie multiplication algebra\/} of $\af$ (see \cite[Section 3 in Chapter II]{S}). Finally, let $$\der
(\af):=\{D\in\End (\af)\mid\forall\,x,y\in\af:D(xy)=D(x)y+xD(y)\}$$ denote the {\em derivation
algebra\/} of $\af$. Note that $\der(\af)$ is a subalgebra of $\gl(\af)$, and therefore $\der(\af)$
is another Lie algebra associated to $\af$.

An algebra $\af$ is called {\em nilpotent\/} if there exists a positive integer $n$ such that any
product of $n$ elements in $\af$, no matter how associated, is zero. This generalizes the concept
of nilpotency for associative algebras. Note that every nilpotent algebra is solvable (see \cite[p.
18]{S}). For any subset $\Sf$ of an algebra $\af$ let $\Sf^*$ denote the subalgebra of $\mult
(\af)$ generated by $\{L_s:\af\to\af\mid s\in\Sf\}\cup\{R_s:\af\to\af\mid s\in\Sf\}$. Then an
ideal $\If$ of $\af$ is nilpotent if, and only if, $\If^*$ is nilpotent (see \cite[Theorem~2.4]{S}).
In particular, $\af$ is nilpotent if, and only if, $\mult(\af)=\af^*$ is nilpotent.

An algebra $\af$ is called {\em flexible\/} if $L_x\circ R_x=R_x\circ L_x$ holds for every element
$x\in\af$, or equivalently, if the identity $x(yx) =(xy)x$ is satisfied for all elements $x,y\in\af$
(see \cite[p.\ 28]{S}). An algebra $\af$ is called {\em power-associative\/} if any subalgebra of
$\af$ generated by one element is associative (see \cite[p.\ 30]{S}). In this case one can define
powers of an element $x\in\af$ recursively by $x^1:=x$ and $x^{n+1}:=xx^n$ for every positive
integer $n$. These powers then satisfy the usual power laws $x^{m+n}=x^mx^n$ and $(x^m)^n
=x^{mn}$ (see \cite[p.\ 30]{S}). Alternative algebras, Jordan algebras, and Lie algebras are flexible
and power-associative (see \cite[pp. 28, 30, and 92]{S}). 

An element $x$ of a power-associative algebra is called {\em nilpotent\/} if $x^n=0$ for some
positive integer $n$. A subset of a power-associative algebra consisting only of nilpotent elements
is called {\em nil\/} (see \cite[p.\ 30]{S}). Note that every solvable power-associative algebra is
nil (see \cite[p.\ 31]{S}).

The {\em opposite algebra\/} $\af^\op$ of an algebra $\af$ with multiplication $(x,y)\mapsto xy$
has the same underlying vector space structure and the opposite multiplication $(x,y)\mapsto x
\cdot y:=yx$. Since the derived series of $\af^\op$ coincides with the derived series of $\af$,
the opposite algebra of a solvable algebra is solvable. Moreover, it is clear from the definition of
nilpotency, that the opposite algebra of a nilpotent algebra is nilpotent.


\section{Leibniz algebras -- Definition and Examples}


A {\em left Leibniz algebra\/} is an algebra $\lf$ such that every left multiplication operator
$L_x:\lf\to\lf$, $y\mapsto xy$ is a derivation. This is equivalent to the identity
\begin{equation}\label{LLI}
x(yz)=(xy)z+y(xz)
\end{equation}
for all $x,y,z\in\lf$, the {\em left Leibniz identity\/}, which in turn is equivalent to the identity
\begin{equation}\label{RLLI}
(xy)z=x(yz)-y(xz)
\end{equation}
for all $x,y,z\in\lf$.

Similarly, one defines a {\em right Leibniz algebra\/} to be an algebra $\lf$ such that every right
multiplication operator $R_y:\lf\to\lf$, $x\mapsto xy$ is a derivation. This is equivalent to the
identity
\begin{equation}\label{RLI}
(xy)z=(xz)y+x(yz)
\end{equation}
for all $x,y,z\in\lf$, the {\em right Leibniz identity\/}, which in turn is equivalent to the identity
\begin{equation}\label{LRLI}
x(yz)=(xy)z-(xz)y
\end{equation}
for all $x,y,z\in\lf$.

Following Mason and Yamskulna \cite{MY} we call an algebra a {\em symmetric Leibniz algebra\/}
if it is at the same time a left and a right Leibniz algebra. Note that every Lie algebra is a symmetric
Leibniz algebra.

It is clear that the opposite algebra of a left Leibniz algebra is a right Leibniz algebra and that the
opposite algebra of a right Leibniz algebra is a left Leibniz algebra. Consequently, the opposite
algebra of a symmetric Leibniz algebra is again a symmetric Leibniz algebra. Therefore, in most
situations it is enough to consider only left or right Leibniz algebras.

The following results are direct consequences of the left and right Leibniz identity, respectively.

\begin{lem}\label{leftmultsq}
If $\lf$ is a left Leibniz algebra, then $L_{x^2}=0$ for every element $x\in\lf$.
\end{lem}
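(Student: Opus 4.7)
The plan is to read off the conclusion directly from the rewritten form of the left Leibniz identity given in the excerpt. Recall that the identity
\[
x(yz) = (xy)z + y(xz)
\]
was shown equivalent to
\[
(xy)z = x(yz) - y(xz).
\]
I would specialize this second form by setting $y := x$. The right-hand side then becomes $x(xz) - x(xz) = 0$, so $(xx)z = 0$ for every $z \in \lf$. Since $(xx)z = L_{x^2}(z)$, this says exactly that $L_{x^2} = 0$.

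In operator form, the left Leibniz identity asserts that $L_x$ is a derivation, which translates to $L_{xy} = L_x \circ L_y - L_y \circ L_x$ (applied to $z$ gives $(xy)z = x(yz) - y(xz)$). Setting $y = x$ yields $L_{x^2} = L_x \circ L_x - L_x \circ L_x = 0$.

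There is no real obstacle here: the lemma is an immediate consequence of the fact that the bracket $L_x \circ L_y - L_y \circ L_x$ is forced by the left Leibniz identity to equal $L_{xy}$, together with the trivial observation that this commutator vanishes when $y = x$. The only thing to be careful about is making sure to use the correct rearrangement of the identity, namely the form $(xy)z = x(yz) - y(xz)$ rather than the original $x(yz) = (xy)z + y(xz)$, so that the two copies of $x(xz)$ cancel rather than add.
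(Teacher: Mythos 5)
Your proof is correct and is essentially identical to the paper's: both specialize the rearranged left Leibniz identity $(xy)z=x(yz)-y(xz)$ at $y=x$ so that the two terms $x(xz)$ cancel, giving $L_{x^2}=0$. The operator-form remark ($L_{xy}=[L_x,L_y]$, which vanishes at $y=x$) is a nice equivalent phrasing but not a different argument.
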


\begin{lem}\label{rightmultsq}
If $\lf$ is a right Leibniz algebra, then $R_{x^2}=0$ for every element $x\in\lf$.
\end{lem}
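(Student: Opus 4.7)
The plan is to apply the right Leibniz identity directly with a cleverly chosen substitution. Recall that showing $R_{x^2}=0$ amounts to proving $y\cdot x^2=0$ for every $y\in\lf$, i.e., $y(xx)=0$.

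First I would invoke the equivalent form (\ref{LRLI}) of the right Leibniz identity, namely $a(bc)=(ab)c-(ac)b$, which was derived from (\ref{RLI}) by rearrangement. Substituting $a=y$, $b=x$, $c=x$ yields
\begin{equation*}
y(xx)=(yx)x-(yx)x=0,
\end{equation*}
so $y\cdot x^2=0$ for all $y\in\lf$, which is exactly the statement that $R_{x^2}=0$. Alternatively one can start from (\ref{RLI}) in the form $(yx)x=(yx)x+y(xx)$ (with the same substitution) and cancel the common term on both sides to reach the same conclusion.

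There is no real obstacle here; the only subtle point is keeping track of which argument of the multiplication $x^2=xx$ is being acted on, since in a non-anticommutative setting one has to be careful to distinguish $xx$ treated as a middle-right product from $xx$ treated as a left-middle product. The substitution $b=c=x$ is exactly what makes the two terms on the right-hand side of (\ref{LRLI}) coincide, so the argument really is a one-line computation once the correct form of the identity is in place.
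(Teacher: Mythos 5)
Your proof is correct and is essentially the paper's argument: the paper proves the left-handed version $L_{x^2}=0$ by the mirror-image computation $x^2y=x(xy)-x(xy)=0$ from (\ref{RLLI}) and then deduces the right-handed statement by passing to the opposite algebra, whereas you simply carry out the same one-line cancellation directly in the right Leibniz algebra using (\ref{LRLI}). Both routes are the identical computation up to the left/right symmetry, so there is nothing to add.
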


\begin{proof}
We only prove Lemma \ref{leftmultsq} as this yields Lemma \ref{rightmultsq} by considering
the opposite algebra. Let $x,y\in\lf$ be arbitrary elements. Then we obtain from identity
(\ref{RLLI}) that $$L_{x^2}(y)=x^2y=x(xy)-x(xy)=0\,,$$ which shows that $L_{x^2}=0$.
\end{proof}

Every abelian (left or right) Leibniz algebra is a Lie algebra, but there are many Leibniz algebras
that are not Lie algebras (see, for example, \cite{C,AO1,AO2,DA,AAO1,AAO2,AOR,ORT,CK,G,
GVKO,KRSH,CGVO,CCO,CGVOK,DMS1,D,DMS2,DMS3,DMS4}). We will use the following three
examples to illustrate the concepts introduced in this section.
\vspace{.2cm}

\noindent {\bf Examples.}
\begin{enumerate}
\item[(1)] Let $\af_\ell:=\F e\oplus\F f$ be a two-dimensional vector space with multiplication
                $ee=fe=ff=0$, and $ef=f$. Then $\af_\ell$ is a left Leibniz algebra, but not a right
                Leibniz algebra. We have that $\af_\ell^{(1)}=\F f$ and $\af_\ell^{(2)}=0$. Hence
                $\af_\ell$ is solvable.
\item[(2)] Let $\nf:=\F e\oplus\F f$ be a two-dimensional vector space with multiplication $ee=
                ef=fe=0$, and $ff=e$. Then $\nf$ is a symmetric Leibniz algebra. We will see
                in Section 5 that $\nf$ is nilpotent.
\item[(3)] Let $\Sf_\ell:=\slf_2(\C)\times L(1)$, where $\slf_2(\C)$ is the Lie algebra of traceless
                complex $2\times 2$ matrices and $L(1)$ is the two-dimensional left $\slf_2(\C)$-module
                (see \cite[Lemma 7.2]{H}). Then $\Sf_\ell$ with multiplication $(X,a)(Y,b):=([X,Y],X
                \cdot b)$ for any $X,Y\in\slf_2(\C)$ and any $a,b\in L(1)$ is a left Leibniz algebra (see
                Section~3 and Lemma \ref{ext})\footnote{This is known as the {\em hemi-semidirect product\/}
                of $\slf_2(\C)$ and $L(1)$ (see \cite[Definition 1.5]{OW}).}. Moreover, it can be shown
                that $\Sf_\ell$ is simple (see Section 7 for the definition of the simplicity of Leibniz algebras).
\end{enumerate}
\vspace{.3cm}

\noindent {\bf Remark.} One can prove that up to isomorphism $\af_\ell$, $\af_\ell^\op$, and $\nf$
are the only two-dimensional left or right non-Lie Leibniz algebras (see \cite[pp. 11/12]{DMS1}).
\vspace{.2cm}

Let $\lf$ be a left or right Leibniz algebra. Then $$C_\ell(\lf):=\{c\in\lf\mid L_c=0\}=\{c\in\lf
\mid\forall\,x\in\lf:cx=0\}$$  is called the {\it left center\/} of $\lf$, $$C_r(\lf):=\{c\in\lf
\mid R_c=0\}=\{c\in\lf\mid\forall\,x\in\lf:xc=0\}$$ is called the {\it right center\/} of $\lf$,
and $$C(\lf):=C_\ell(\lf)\cap C_r(\lf)$$ is called the {\em center\/} of $\lf$.\footnote{Note
that, as for Lie algebras, the given definition of the center of a left or right Leibniz algebra is
not the one used for other non-associative algebras (see \cite[p.\ 14]{S}). The reason for
this is that, in general, Leibniz algebras, contrary to alternative algebras or Jordan algebras,
are far from being associative (see Proposition \ref{leftass} and Proposition \ref{rightass}).}
\vspace{.4cm}

\noindent {\bf Remark.} For a Lie algebra the left center, the right center, and the center are
all the same.
\vspace{.2cm}

It is an immediate consequence of the definitions that the left center is a right ideal and the
right center is a left ideal. More precisely, we have the following results.

\begin{pro}\label{leftcen}
Let $\lf$ be a left Leibniz algebra. Then $\lf C_\ell(\lf)\subseteq C_\ell(\lf)$ and $C_\ell(\lf)\lf
=0$. In particular, $C_\ell(\lf)$ is an abelian ideal of $\lf$.
\end{pro}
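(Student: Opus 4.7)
The plan is to unwind the definitions and apply the left Leibniz identity once. The statement $C_\ell(\lf)\lf=0$ is literally the definition of the left center, so there is nothing to check there.

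For the first assertion $\lf C_\ell(\lf)\subseteq C_\ell(\lf)$, I would fix $c\in C_\ell(\lf)$ and $x\in\lf$, and show that $L_{xc}=0$, i.e.\ that $(xc)y=0$ for every $y\in\lf$. The natural tool is identity (\ref{RLLI}), which gives
\[
(xc)y = x(cy) - c(xy).
\]
Now $cy=0$ because $c$ lies in the left center, so the first term vanishes; similarly $c(xy)=0$ since $xy\in\lf$. Thus $(xc)y=0$, which shows $xc\in C_\ell(\lf)$ and establishes $\lf C_\ell(\lf)\subseteq C_\ell(\lf)$.

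For the ``in particular'' claim, the two displayed inclusions show respectively that $C_\ell(\lf)$ is a left ideal and that $C_\ell(\lf)\lf=0\subseteq C_\ell(\lf)$, so it is also a right ideal; hence $C_\ell(\lf)$ is a (two-sided) ideal of $\lf$. The abelian property is then automatic, since
\[
C_\ell(\lf)\cdot C_\ell(\lf)\subseteq C_\ell(\lf)\cdot\lf = 0.
\]
There is no real obstacle here: the only content of the proposition is the use of the left Leibniz identity to push $c$ across the product $xy$, which (\ref{RLLI}) accomplishes in a single line.
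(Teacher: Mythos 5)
Your proof is correct and follows essentially the same route as the paper: both apply identity (\ref{RLLI}) in the form $(xc)y=x(cy)-c(xy)$ and use $L_c=0$ to kill both terms, while the second statement is read off directly from the definition of the left center. The only difference is that you spell out the ``in particular'' claim explicitly, which the paper leaves implicit.
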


\begin{pro}\label{rightcen}
Let $\lf$ be a right Leibniz algebra. Then $C_r(\lf)\lf\subseteq C_r(\lf)$ and $\lf C_r(\lf)=0$.
In particular, $C_r(\lf)$ is an abelian ideal of $\lf$.
\end{pro}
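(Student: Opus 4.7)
The plan is to proceed in direct parallel with Proposition \ref{leftcen}, or equivalently, to deduce the statement by passing to the opposite algebra. Recall from Section 2 that if $\lf$ is a right Leibniz algebra, then $\lf^{\op}$ is a left Leibniz algebra, and the right center of $\lf$ is by definition the left center of $\lf^{\op}$; so Proposition \ref{leftcen} applied to $\lf^{\op}$ would yield the statement immediately. For clarity, however, I would give the direct argument using identity (\ref{LRLI}).

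First I would observe that $\lf C_r(\lf)=0$ is immediate from the definition: if $c\in C_r(\lf)$ and $x\in\lf$, then $xc=R_c(x)=0$. In particular $\lf C_r(\lf)\subseteq C_r(\lf)$, so it remains to show that $C_r(\lf)$ is a right ideal, i.e., $C_r(\lf)\lf\subseteq C_r(\lf)$.

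For this, let $c\in C_r(\lf)$ and $y\in\lf$; we must show that $cy\in C_r(\lf)$, i.e., that $x(cy)=0$ for every $x\in\lf$. Applying the right Leibniz identity in the form (\ref{LRLI}) gives
\[
x(cy)=(xc)y-(xy)c.
\]
Since $c\in C_r(\lf)$, both $xc=0$ and $(xy)c=0$, so $x(cy)=0$, as desired. Thus $C_r(\lf)\lf\subseteq C_r(\lf)$, and combined with $\lf C_r(\lf)=0\subseteq C_r(\lf)$ we conclude that $C_r(\lf)$ is an ideal.

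Finally, to see that this ideal is abelian, note that $C_r(\lf)C_r(\lf)\subseteq\lf C_r(\lf)=0$. There is no real obstacle here; the only point worth flagging is the choice of which of the two equivalent right Leibniz identities (\ref{RLI}) or (\ref{LRLI}) to invoke, and (\ref{LRLI}) is the convenient one because it expresses $x(cy)$ directly in terms of products that involve $c$ on the right.
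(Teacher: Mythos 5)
Your proof is correct and is essentially the paper's argument: the paper proves Proposition \ref{leftcen} directly via identity (\ref{RLLI}) and obtains Proposition \ref{rightcen} by passing to the opposite algebra, which is exactly the mirror of your direct computation with (\ref{LRLI}). Both routes you mention are valid and amount to the same calculation.
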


\begin{proof}
We only prove Proposition \ref{leftcen} as this yields Proposition \ref{rightcen} by considering
the opposite algebra. Let $c\in C_\ell(\lf)$ and $x,y\in\lf$ be arbitrary elements. Then we obtain
from identity (\ref{RLLI}) that $$L_{xc}(y)=(xc)y=x(cy)-c(xy)=xL_c(y)-L_c(xy)=0\,,$$ which
shows that $L_{xc}=0$, i.e., $xc\in C_\ell(\lf)$. This proves the first statement, and the second
statement is an immediate consequence of the definition of the left center.
\end{proof}

\noindent {\bf Examples.}
\begin{enumerate}
\item[(1)] For the two-dimensional solvable left Leibniz algebra $\af_\ell$ from Example 1 we
                have that $C_\ell(\af_\ell)=\F f$ and $C_r(\af_\ell)=\F e$. Hence $C(\af_\ell)=0$.
\item[(2)] For the two-dimensional nilpotent symmetric Leibniz algebra $\nf$ from Example 2
                we have that $C(\nf)=C_\ell(\nf)=C_r(\nf)=\F e$.
\item[(3)] For the five-dimensional simple left Leibniz algebra $\Sf_\ell$ from Example 3 we have
                that $C_\ell(\Sf_\ell)=L(1)$ and $C_r(\Sf_\ell)=0$. Hence $C(\Sf_\ell)=0$.
\end{enumerate}
\vspace{.2cm}

Let $\lf$ be a left or right Leibniz algebra over a field $\F$. Then $$\leib(\lf):=\langle x^2\mid x
\in\lf\rangle_\mathbb{F}$$ is called the {\em Leibniz kernel\/} of $\lf$. The Leibniz kernel measures
how much a left or right Leibniz algebra deviates from being a Lie algebra. In particular, a left or
right Leibniz algebra is a Lie algebra if, and only if, its Leibniz kernel vanishes.

The following results are well-known (see \cite[Proposition 1\,(a)]{C}, \cite[Proposition~1.1.4]{Co},
and \cite[Lemma 1.3 and its proof]{B2}).

\begin{pro}\label{leftker}
Let $\lf$ be a left Leibniz algebra. Then $\lf\leib(\lf)\subseteq\leib(\lf)$ and $\leib(\lf)\subseteq
C_\ell(\lf)$.
\end{pro}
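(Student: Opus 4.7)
The plan is to prove the two inclusions separately, starting with the second since it falls out directly from Lemma~\ref{leftmultsq}. Every generator $x^2$ of $\leib(\lf)$ satisfies $L_{x^2}=0$ by that lemma, so $x^2\in C_\ell(\lf)$. Since $C_\ell(\lf)$ is an $\F$-subspace (being the kernel of the linear map $x\mapsto L_x$), taking $\F$-linear spans yields $\leib(\lf)\subseteq C_\ell(\lf)$.

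For the first inclusion, since $\leib(\lf)$ is $\F$-spanned by squares, it suffices to check that $yx^2\in\leib(\lf)$ for arbitrary $x,y\in\lf$. The key preliminary step is the polarization identity: expanding $(x+y)^2$ using bilinearity gives
\begin{equation*}
(x+y)^2 = x^2 + xy + yx + y^2,
\end{equation*}
so that $xy+yx=(x+y)^2-x^2-y^2\in\leib(\lf)$ for all $x,y\in\lf$. This tells us that although individual products $xy$ need not lie in $\leib(\lf)$, the symmetrized products $xy+yx$ always do.

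Now I would combine this with the left Leibniz identity. Since $L_y$ is a derivation, applying it to the product $x\cdot x$ yields
\begin{equation*}
yx^2 = L_y(x\cdot x) = L_y(x)\cdot x + x\cdot L_y(x) = (yx)x + x(yx).
\end{equation*}
Setting $a:=yx\in\lf$, the right-hand side becomes $ax+xa$, which lies in $\leib(\lf)$ by the polarization step above. Hence $yx^2\in\leib(\lf)$, and extending by $\F$-linearity in the $\leib(\lf)$-argument gives $\lf\leib(\lf)\subseteq\leib(\lf)$.

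There is no real obstacle here; the only subtlety is remembering that $\leib(\lf)$ is defined as the \emph{span} of squares rather than the set of squares, so one must check closure under left multiplication only on the generators $x^2$ and then invoke bilinearity of the product to conclude for arbitrary elements of $\leib(\lf)$.
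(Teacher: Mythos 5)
Your proof is correct, but the key step for the first inclusion is genuinely different from the paper's. The paper proves the right-handed version (Proposition~\ref{rightker}) and obtains Proposition~\ref{leftker} by passing to the opposite algebra; translated to the left-handed setting, its computation is to expand $(x^2+y)^2=x^2x^2+x^2y+yx^2+y^2=yx^2+y^2$ using $L_{x^2}=0$ from Lemma~\ref{leftmultsq}, whence $yx^2=(x^2+y)^2-y^2\in\leib(\lf)$ -- no further use of the Leibniz identity is needed. You instead apply the derivation property of $L_y$ to get $yx^2=(yx)x+x(yx)$ and then invoke the polarization identity $ax+xa=(a+x)^2-a^2-x^2\in\leib(\lf)$. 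Both arguments are short and correct; the paper's buys maximal economy (everything reduces to the single fact $L_{x^2}=0$), while yours makes explicit the useful observation that all symmetrized products $xy+yx$ lie in $\leib(\lf)$ -- a fact the paper only records later, in the example on the anti-symmetric kernel showing $(\lf_\ad)_0\subseteq\leib(\lf)$ -- and shows the slightly finer statement that $yx^2$ is itself such a symmetrized product. Your handling of the span issue and of the second inclusion matches the paper exactly.
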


\begin{pro}\label{rightker}
Let $\lf$ be a right Leibniz algebra. Then $\leib(\lf)\lf\subseteq\leib(\lf)$ and $\leib(\lf)\subseteq
C_r(\lf)$.
\end{pro}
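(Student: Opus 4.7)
The plan is to give a short direct proof that parallels what the proof of Proposition \ref{leftker} would be, although the slickest route is really to invoke the opposite-algebra trick used elsewhere in the paper. Namely, if $\lf$ is a right Leibniz algebra, then $\lf^\op$ is a left Leibniz algebra, and on the common underlying vector space one has $\leib(\lf^\op)=\leib(\lf)$ (because $x\cdot^\op x=xx=x^2$) as well as $C_\ell(\lf^\op)=C_r(\lf)$; the condition $\lf^\op\cdot^\op\leib(\lf^\op)\subseteq\leib(\lf^\op)$ from Proposition \ref{leftker} then translates into $\leib(\lf)\lf\subseteq\leib(\lf)$, and the containment $\leib(\lf^\op)\subseteq C_\ell(\lf^\op)$ becomes $\leib(\lf)\subseteq C_r(\lf)$. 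If one prefers a self-contained proof, the two claims can also be extracted directly from the right Leibniz identity, and it is this direct route that I sketch below.

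First I would prove the easier containment $\leib(\lf)\subseteq C_r(\lf)$: since $\leib(\lf)$ is spanned by the squares $x^2$, it is enough to show $x^2\in C_r(\lf)$ for every $x\in\lf$. But Lemma \ref{rightmultsq} gives $R_{x^2}=0$, so $y x^2=R_{x^2}(y)=0$ for all $y\in\lf$, i.e., $x^2\in C_r(\lf)$. Next, to prove $\leib(\lf)\lf\subseteq\leib(\lf)$, I would use the polarization identity
\[
(x+y)^2=x^2+xy+yx+y^2,
\]
which shows that $xy+yx\in\leib(\lf)$ for any $x,y\in\lf$. Substituting $x^2$ for one variable and $y$ for the other gives $x^2 y+y x^2\in\leib(\lf)$, and combining this with the fact $y x^2=0$ just established yields $x^2 y\in\leib(\lf)$. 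Since the elements $x^2 y$ and their linear combinations span $\leib(\lf)\lf$, this completes the argument.

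The only real subtlety is the order of the two parts: the inclusion $\leib(\lf)\subseteq C_r(\lf)$ must be proved first, because the fact that $y x^2=0$ is precisely what lets one extract $x^2 y\in\leib(\lf)$ from the polarization relation. Beyond that, the proof is routine bookkeeping and mirrors the structure of the opposite-algebra reductions used repeatedly earlier in the paper.
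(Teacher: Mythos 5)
Your proof is correct and follows essentially the same route as the paper: both parts rest on Lemma \ref{rightmultsq} ($R_{x^2}=0$), and your polarization step $x^2y+yx^2=(x^2+y)^2-(x^2)^2-y^2$ is just a mild repackaging of the paper's direct expansion of $(x^2+y)^2$. The only cosmetic difference is that the paper proves Proposition \ref{rightker} directly and obtains Proposition \ref{leftker} by passing to the opposite algebra, rather than the other way around.
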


\begin{cor}\label{kercen}
If $\lf$ is a symmetric Leibniz algebra, then $\leib(\lf)\subseteq C(\lf)$.
\end{cor}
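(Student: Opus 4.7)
The statement is an immediate corollary of the two preceding propositions, so the plan is essentially one line. Since a symmetric Leibniz algebra $\lf$ is by definition both a left Leibniz algebra and a right Leibniz algebra, I can apply Proposition \ref{leftker} to get $\leib(\lf)\subseteq C_\ell(\lf)$, and apply Proposition \ref{rightker} to get $\leib(\lf)\subseteq C_r(\lf)$. Intersecting these two inclusions yields
\[
\leib(\lf)\subseteq C_\ell(\lf)\cap C_r(\lf)=C(\lf),
\]
which is exactly the desired conclusion.

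There is no real obstacle here: the nontrivial content was already packaged into Propositions \ref{leftker} and \ref{rightker}, whose hypotheses are precisely the two Leibniz identities that together define a symmetric Leibniz algebra. The only thing to check is that one is allowed to invoke each proposition separately under the symmetric hypothesis, which is immediate from the definition. So the proof will consist of just citing these two propositions and taking the intersection.
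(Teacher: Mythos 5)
Your proof is correct and is exactly the argument the paper intends: Corollary \ref{kercen} is stated as an immediate consequence of Propositions \ref{leftker} and \ref{rightker}, obtained by intersecting the inclusions $\leib(\lf)\subseteq C_\ell(\lf)$ and $\leib(\lf)\subseteq C_r(\lf)$. Nothing further is needed.
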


\begin{proof}
We only prove Proposition \ref{rightker} as this yields Proposition \ref{leftker} by considering the
opposite algebra. As $C_r(\lf)$ is a subspace of $\lf$, the second statement is an immediate
consequence of Lemma \ref{rightmultsq}.

Since $\leib(\lf)$ is a subspace of $\lf$, it is enough for the proof of the first statement to show
that $x^2y\in\leib(\lf)$ for any elements $x,y\in\lf$. By using again Lemma \ref{rightmultsq},
we obtain that $$(x^2+y)^2=(x^2+y)(x^2+y)=x^2x^2+x^2y+yx^2+y^2=x^2y+y^2\,,$$
and therefore $x^2y=(x^2+y)^2-y^2\in\leib(\lf)$.
\end{proof}

\noindent {\bf Examples.}
\begin{enumerate}
\item[(1)] For the two-dimensional solvable left Leibniz algebra $\af_\ell$ from Example 1 we
                have that $\leib(\af_\ell)=\F f=C_\ell(\af_\ell)$. Moreover, $\leib(\af_\ell)\cap C_r
                (\af_\ell)=0$.
\item[(2)] For the two-dimensional nilpotent symmetric Leibniz algebra $\nf$ from Example 2
                we have that $\leib(\nf)=\F e=C_\ell(\nf)=C_r(\nf)=C(\nf)$.
\item[(3)] For the five-dimensional simple left Leibniz algebra $\Sf_\ell$ from Example 3 we
                have that $\leib(\Sf_\ell)=L(1)=C_\ell(\Sf_\ell)$. Moreover, $\leib(\Sf_\ell)\cap C_r
                (\Sf_\ell)=0$.
\end{enumerate}
\vspace{.2cm}

\noindent {\bf Remark.} Let $\lf$ be the two-dimensional solvable left Leibniz algebra from
Example~1 or the five-dimensional simple left Leibniz algebra from Example 3. We have
that $\lf\leib(\lf)=\leib(\lf)$. This shows that for a left Leibniz algebra $\lf$, $\lf\leib(\lf)$
can be non-zero and $\lf\leib(\lf)$ is not necessarily properly contained in $\leib(\lf)$. Of
course, a similar statement holds for right Leibniz algebras. But for a symmetric Leibniz algebra
$\lf$ we have that $\lf\leib(\lf)=0=\leib(\lf)\lf$ as $\leib(\lf)\subseteq C(\lf)$.
\vspace{.3cm}

The following result is a consequence of Proposition \ref{leftker} or Proposition \ref{rightker}
(see \cite[p.\ 11]{DMS1} and \cite[p.\ 479]{FM}).

\begin{pro}\label{ker}
Let $\lf$ be a left or right Leibniz algebra. Then $\leib(\lf)$ is an abelian ideal of $\lf$. Moreover,
if $\lf\ne 0$, then $\leib(\lf)\ne\lf$.
\end{pro}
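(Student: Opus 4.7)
The plan is to treat the left Leibniz case directly and deduce the right Leibniz case by passing to the opposite algebra, as has been done for several earlier results in the paper.

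For the left case, I would invoke Proposition \ref{leftker} to obtain both $\lf\leib(\lf)\subseteq\leib(\lf)$ and $\leib(\lf)\subseteq C_\ell(\lf)$. The first inclusion says that $\leib(\lf)$ is a left ideal. For the right ideal property I would combine the second inclusion with Proposition \ref{leftcen}: since $C_\ell(\lf)\lf=0$, we get $\leib(\lf)\lf\subseteq C_\ell(\lf)\lf=0\subseteq\leib(\lf)$. Together these give that $\leib(\lf)$ is an ideal. The abelianness is then immediate, since
\[
\leib(\lf)\leib(\lf)\subseteq\leib(\lf)\lf=0.
\]

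For the second assertion, I would argue by contradiction: suppose $\leib(\lf)=\lf$. Then by Proposition \ref{leftker} we have $\lf\subseteq C_\ell(\lf)$, so $C_\ell(\lf)=\lf$, which by definition of the left center means $xy=0$ for all $x,y\in\lf$. In particular $x^2=0$ for every $x\in\lf$, so $\leib(\lf)=0$, contradicting $\lf\ne 0$. Hence $\leib(\lf)\ne\lf$.

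Finally, the right Leibniz case follows by applying the left case to $\lf^\op$, since taking the opposite algebra interchanges left and right multiplications (so $\leib(\lf)=\leib(\lf^\op)$ as subspaces, and ideal/abelian properties are preserved under passing to $\lf^\op$). I do not expect any real obstacle here: the whole proof is an assembly of the earlier propositions, and the only subtle point is noticing that the inclusion $\leib(\lf)\subseteq C_\ell(\lf)$ already delivers both the right ideal property and the abelianness in one stroke, as well as the key contradiction in the second part.
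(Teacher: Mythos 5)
Your proposal is correct and follows essentially the same route as the paper: both deduce the ideal and abelian properties from Proposition \ref{leftker} (resp.\ \ref{rightker}) together with the fact that the left (resp.\ right) center annihilates $\lf$ on the appropriate side, and both settle the second assertion by observing that $\leib(\lf)=\lf$ forces $\lf\lf=0$ and hence $\leib(\lf)=0$. The only cosmetic difference is that the paper treats the left and right cases symmetrically by citing the two propositions in parallel rather than passing to the opposite algebra, and it phrases the last step as a direct implication rather than a contradiction.
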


\begin{proof}
It follows from Proposition \ref{leftker} or Proposition \ref{rightker} that in either case $\leib(\lf)$
is an ideal of $\lf$ such that $\leib(\lf)\leib(\lf)=0$.

Suppose now that $\leib(\lf)=\lf$. Then we have that $\lf\lf=0$. In particular, every square of
$\lf$ is zero. Consequently, we obtain that $\lf=\leib(\lf)=0$.
\end{proof}
\vspace{-.05cm}

\noindent {\bf Remark:} It is an immediate consequence of Proposition \ref{ker} that every
one-dimensional left or right Leibniz algebra is a Lie algebra. Moreover, with a little more work
one can use Proposition \ref{ker} to classify the two-dimensional left or right Leibniz algebras
(see \cite[pp.\ 11/12]{DMS1}).
\vspace{.2cm}

Let $\lf$ be a left or right Leibniz algebra. Then by definition of the Leibniz kernel, $\lf_\lie:=
\lf/\leib(\lf)$ is a Lie algebra. We call $\lf_\lie$ the {\em canonical Lie algebra\/} associated to
$\lf$. In fact, the Leibniz kernel is the smallest ideal such that the corresponding factor algebra
is a Lie algebra (see \cite[Theorem~2.5]{FM}).

\begin{pro}\label{minlie}
Let $\lf$ be a left or right Leibniz algebra. Then $\leib(\lf)$ is the smallest ideal $\If$ of $\lf$ such
that $\lf/\If$ is a Lie algebra.
\end{pro}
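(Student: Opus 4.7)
The plan is to reduce the claim to the statement that an ideal $\If$ of $\lf$ has Lie-algebra quotient $\lf/\If$ if and only if $x^2 \in \If$ for every $x \in \lf$. Once that characterization is in hand, the minimality of $\leib(\lf)$ is immediate from its definition as the $\F$-span of the squares.

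First I would observe that the paper has already recorded, just before the statement, that $\lf/\leib(\lf)$ is a Lie algebra; in particular $\leib(\lf)$ is one ideal of the kind described, so existence is not the issue. The content is minimality. To that end, suppose $\If$ is any ideal of $\lf$ such that $\lf/\If$ is a Lie algebra. A Lie algebra is in particular anticommutative, so every square in $\lf/\If$ vanishes; this says precisely that $x^2 \in \If$ for every $x \in \lf$. Since $\leib(\lf) = \langle x^2 \mid x \in \lf\rangle_\F$ and $\If$ is an $\F$-subspace, it follows that $\leib(\lf) \subseteq \If$, which is exactly the minimality we want.

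I do not expect any serious obstacle; the only point worth a sentence of care is the direction used in the first paragraph, namely why killing all squares (together with the Leibniz identity) forces the Jacobi identity, so that the quotient is genuinely a Lie algebra rather than just an anticommutative Leibniz algebra. In the left case one applies identity (\ref{LLI}) in $\lf/\leib(\lf)$ and uses $\overline{xy} = -\overline{yx}$ to rewrite $\overline{x(yz)} = \overline{(xy)z} + \overline{y(xz)}$ as $\overline{x(yz)} + \overline{y(zx)} + \overline{z(xy)} = 0$; the right case is dual via the opposite algebra. But since the excerpt treats $\lf/\leib(\lf)$ being a Lie algebra as already established, this verification is only needed as background, and the actual proof of Proposition~\ref{minlie} reduces to the two-line argument above.
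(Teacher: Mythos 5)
Your proposal is correct and is essentially identical to the paper's proof: both take an arbitrary ideal $\If$ with $\lf/\If$ a Lie algebra, deduce $x^2\in\If$ from anticommutativity of the quotient, and conclude $\leib(\lf)\subseteq\If$ since $\If$ is a subspace and $\leib(\lf)$ is spanned by squares. The extra remarks on why the quotient by the Leibniz kernel satisfies the Jacobi identity are sound background but, as you note, not part of the proof itself.
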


\begin{proof}
Let $\If$ be an ideal of $\lf$ such that $\lf/\If$ is a Lie algebra. Then it follows from
the anti-commutativity of $\lf/\If$ that $x^2\in\If$ for every element $x\in\lf$. Since $\If$ is a
subspace of $\lf$, we conclude that $\leib(\lf)\subseteq\If$.
\end{proof}

\begin{pro}\label{leftmult}
Let $\lf$ be a left Leibniz algebra. Then the set $L(\lf)$ of left multiplication operators of $\lf$ is
an ideal of the derivation algebra $\der(\lf)$ of $\lf$, and $\lf/C_\ell(\lf)$ is a Lie algebra that is
isomorphic to $L(\lf)$.
\end{pro}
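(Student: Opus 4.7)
The plan is to establish three things in sequence: that $L(\lf)$ is a Lie subalgebra of $\der(\lf)$, that it is actually an ideal there, and finally that the map $x \mapsto L_x$ induces the claimed isomorphism.

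First, I would observe that $L(\lf) \subseteq \der(\lf)$ is immediate from the definition of a left Leibniz algebra. To see that $L(\lf)$ is closed under the commutator bracket of $\gl(\lf)$, I would read the left Leibniz identity in the form (\ref{RLLI}): for all $x,y,z \in \lf$,
\[
L_{xy}(z) = (xy)z = x(yz) - y(xz) = L_x(L_y(z)) - L_y(L_x(z)) = [L_x, L_y](z),
\]
so $L_{xy} = [L_x, L_y] \in L(\lf)$. Hence $L(\lf)$ is a Lie subalgebra of $\gl(\lf)$.

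Next, to show $L(\lf)$ is an ideal in $\der(\lf)$, take an arbitrary $D \in \der(\lf)$ and $L_x \in L(\lf)$ and evaluate $[D, L_x]$ on an arbitrary $y \in \lf$ using the Leibniz rule for $D$:
\[
[D,L_x](y) = D(xy) - xD(y) = D(x)y + xD(y) - xD(y) = D(x)y = L_{D(x)}(y).
\]
Thus $[D, L_x] = L_{D(x)} \in L(\lf)$, which proves the ideal claim.

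Finally, I would consider the linear map $\phi : \lf \to L(\lf)$, $x \mapsto L_x$, which is surjective by construction and has kernel precisely $C_\ell(\lf)$ by the definition of the left center. The identity $L_{xy} = [L_x, L_y]$ established above says that $\phi$ sends the product in $\lf$ to the Lie bracket in $L(\lf)$. Passing to the quotient therefore yields a vector-space isomorphism $\lf/C_\ell(\lf) \xrightarrow{\sim} L(\lf)$ which transports the (well-defined) induced multiplication on $\lf/C_\ell(\lf)$ to the Lie bracket on $L(\lf)$; in particular this induced multiplication is anticommutative and satisfies the Jacobi identity, so $\lf/C_\ell(\lf)$ is itself a Lie algebra isomorphic to $L(\lf)$. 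I expect the only mildly delicate point to be keeping track of signs/order of arguments when turning (\ref{LLI}) into the form $(xy)z = L_x L_y(z) - L_y L_x(z)$; everything else is a one-line consequence of the axioms.
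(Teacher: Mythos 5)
Your proposal is correct and follows essentially the same route as the paper: the same commutator computation $[D,L_x]=L_{D(x)}$ for the ideal property, and the same use of the map $x\mapsto L_x$ with kernel $C_\ell(\lf)$ together with $L_{xy}=[L_x,L_y]$ and the fundamental homomorphism theorem. The only cosmetic difference is that you derive $L_{xy}=[L_x,L_y]$ directly from identity (\ref{RLLI}), while the paper obtains it as the special case $D=L_x$ of the ideal identity; both are fine.
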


\begin{proof}
By definition of a left Leibniz algebra, $L(\lf)\subseteq\der(\lf)$. Since the multiplication of $\lf$ is
bilinear, $L(\lf)$ is a subspace of $\der(\lf)$.

Let $x,y\in\lf$ and $D\in\der(\lf)$ be arbitrary elements. Then the computation $$[D,L_x](y)=(D
\circ L_x-L_x\circ D)(y)=D(xy)-xD(y)=D(x)y=L_{D(x)}(y)$$ shows that $[D,L_x]=L_{D(x)}\in\der
(\lf)$ for any element $x\in\lf$ and any derivation $D\in\der(\lf)$. This completes the proof that
$L(\lf)$ is an ideal of $\der(\lf)$.

Now consider the linear transformation $L:\lf\to\der(\lf)$ defined by $x\mapsto L_x$. By definition,
$\Ker(L)=C_\ell(\lf)$ and $\im(L)=L(\lf)$. Finally, it follows from the identity $[D,L_y]=L_{D(y)}$
that $L_{xy}=L_{L_x(y)}=[L_x,L_y]$ for any elements $x,y\in\lf$, and thus $L$ is a homomorphism
of Leibniz algebras. Then the fundamental homomorphism theorem shows that $L$ induces a Leibniz
algebra isomorphism from $L/C_\ell(\lf)$ onto $L(\lf)$.
\end{proof}

\noindent {\bf Remark.} I am very grateful to Friedrich Wagemann for pointing out to me the four-term
exact sequence of left Leibniz algebras $$0\to C_\ell(\lf)\to\lf\stackrel{L}\to\der(\lf)\to\out_\ell(\lf)\to
0\,,$$ where $\out_\ell(\lf):=\der(\lf)/L(\lf)$ is the {\em Lie algebra of outer derivations\/} of the left
Leibniz algebra $\lf$, and in which only $\lf$ is not necessarily a Lie algebra (see \cite[Proposition 1.8]{DW}).
\vspace{.3cm}

By combining Proposition \ref{leftker} and Proposition \ref{leftmult} with the fundamental homomorphism
theorem we obtain the following result.

\begin{cor}\label{leftleiblie}
Let $\lf$ be a left Leibniz algebra. Then the set $L(\lf)$ of left multiplication operators of $\lf$ is a
homomorphic image of the canonical Lie algebra $\lf_\lie$ associated to $\lf$.
\end{cor}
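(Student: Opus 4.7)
The plan is to string together the two prior propositions via the factoring property of quotients. Here is the route I would take.

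First, I would recall the map $L\colon\lf\to\der(\lf)$, $x\mapsto L_x$ from Proposition~\ref{leftmult}. That proposition tells us two things I will use: $L$ is a homomorphism of Leibniz algebras with image $L(\lf)$, and its kernel is precisely $C_\ell(\lf)$. Next, Proposition~\ref{leftker} supplies the key containment $\leib(\lf)\subseteq C_\ell(\lf)$. Therefore $\leib(\lf)\subseteq\Ker(L)$, so $L$ vanishes on $\leib(\lf)$.

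By the universal property of quotients (i.e., the factorization part of the fundamental homomorphism theorem), $L$ then descends to a well-defined homomorphism
\[
\bar L\colon\lf_\lie=\lf/\leib(\lf)\longrightarrow L(\lf),\qquad x+\leib(\lf)\longmapsto L_x,
\]
whose image equals $\im(L)=L(\lf)$. Hence $\bar L$ is surjective. Since $\lf_\lie$ is a Lie algebra (by the definition of $\leib(\lf)$ and Proposition~\ref{minlie}), and since any homomorphic image inherits the Lie structure via $\bar L$ (which is compatible with the bracket $[L_x,L_y]=L_{xy}$ derived in the proof of Proposition~\ref{leftmult}), this exhibits $L(\lf)$ as a homomorphic image of $\lf_\lie$ as claimed.

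There is really no obstacle here; the only thing to double-check is the correct direction of the inclusion in Proposition~\ref{leftker} (it is $\leib(\lf)\subseteq C_\ell(\lf)$, exactly what is needed so that $L$ factors through $\lf/\leib(\lf)$ rather than the other way around). Everything else is a purely formal application of the first isomorphism theorem, so I would keep the write-up to three or four lines.
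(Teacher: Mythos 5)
Your argument is correct and is exactly the paper's intended proof: the corollary is stated there as following ``by combining Proposition~\ref{leftker} and Proposition~\ref{leftmult} with the fundamental homomorphism theorem,'' which is precisely your factorization of $L$ through $\lf/\leib(\lf)$ using $\leib(\lf)\subseteq C_\ell(\lf)=\Ker(L)$. Nothing to add.
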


The first part of the next result is \cite[Theorem 2.3]{FM}.

\begin{pro}\label{rightmult}
Let $\lf$ be a right Leibniz algebra. Then the set $R(\lf)$ of right multiplication operators of $\lf$
is an ideal of the derivation algebra $\der(\lf)$ of $\lf$, and $\lf/C_r(\lf)$ is a Lie algebra that is
isomorphic to $R(\lf)^\op$.
\end{pro}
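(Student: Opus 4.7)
The plan is to mirror the proof of Proposition~\ref{leftmult}, with one essential twist: the right Leibniz identity forces the right multiplication map to land in the \emph{opposite} Lie algebra $R(\lf)^\op$ rather than in $R(\lf)$ itself. This is the reason the statement differs cosmetically from its left-handed counterpart.

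First I would verify that $R(\lf)$ is a subspace of $\der(\lf)$: the inclusion $R(\lf)\subseteq\der(\lf)$ is the defining property of a right Leibniz algebra, and linearity of $x\mapsto R_x$ follows from the bilinearity of the product. Then, for any $D\in\der(\lf)$ and $x,y\in\lf$, the one-line calculation
$$[D,R_x](y)=D(yx)-D(y)x=yD(x)=R_{D(x)}(y)$$
shows that $[D,R_x]=R_{D(x)}\in R(\lf)$, proving that $R(\lf)$ is an ideal of $\der(\lf)$.

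Next I would apply identity~(\ref{LRLI}) to $R_{xy}(z)=z(xy)$ to obtain
$$R_{xy}(z)=(zx)y-(zy)x=(R_y\circ R_x-R_x\circ R_y)(z)=-[R_x,R_y](z),$$
so that $R_{xy}=-[R_x,R_y]$. Defining the linear map $R:\lf\to R(\lf)^\op$ by $x\mapsto R_x$ and denoting the product in $R(\lf)^\op$ by $A\cdot B:=[B,A]$, the identity above reads $R_{xy}=R_x\cdot R_y$, so $R$ is an algebra homomorphism with kernel $C_r(\lf)$ and image $R(\lf)$. The fundamental homomorphism theorem then yields an algebra isomorphism $\lf/C_r(\lf)\cong R(\lf)^\op$. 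Since $R(\lf)$ is a Lie subalgebra of $\gl(\lf)$, its opposite algebra is again a Lie algebra (its bracket being the negative of the original), which forces $\lf/C_r(\lf)$ to be a Lie algebra as well.

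The argument is essentially routine once one notices the sign in $R_{xy}=-[R_x,R_y]$; the only real decision is where to absorb it, and the cleanest choice is to push it into the use of the opposite algebra in the target. No substantive obstacle is anticipated beyond that bookkeeping.
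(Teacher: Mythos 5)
Your proposal is correct and follows essentially the same route as the paper: the same computation $[D,R_x]=R_{D(x)}$ establishes the ideal property, and the same identity $R_{xy}=[R_y,R_x]$ (which the paper obtains by specializing $D=R_y$, while you derive it directly from identity~(\ref{LRLI}) --- an equivalent calculation) makes $R$ a homomorphism into the opposite algebra, after which the fundamental homomorphism theorem finishes the argument.
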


\begin{proof}
The proof of Proposition \ref{rightmult} is very similar to the proof of Proposition~\ref{leftmult}.
Nevertheless, we include the whole argument in this case as well, since it shows why left multiplication
operators are preferable when one writes functions on the left of their arguments.

By definition of a right Leibniz algebra, $R(\lf)\subseteq\der(\lf)$. Since the multiplication of $\lf$ is
bilinear, $R(\lf)$ is a subspace of $\der(\lf)$.

Let $x,y\in\lf$ and $D\in\der(\lf)$ be arbitrary elements. Then the computation $$[D,R_x](y)=(D
\circ R_x-R_x\circ D)(y)=D(yx)-D(y)x=yD(x)=R_{D(x)}(y)$$ shows that $[D,R_x]=R_{D(x)}\in
\der(\lf)$ for any element $x\in\lf$ and any derivation $D\in\der(\lf)$. This completes the proof
that $R(\lf)$ is an ideal of $\der(\lf)$.

Now consider the linear transformation $R:\lf\to\der(\lf)$ defined by $x\mapsto R_x$. By definition,
$\Ker(R)=C_r(\lf)$ and $\im(R)=R(\lf)$. Finally, it follows from the identity $[D,R_x]=R_{D(x)}$
that $R_{xy}=R_{R_y(x)}=[R_y,R_x]$ for any elements $x,y\in\lf$, and thus $R:\lf\to\der(\lf)^\op$
is a homomorphism of Leibniz algebras. Then the fundamental homomorphism theorem shows that
$R$ induces a Leibniz algebra isomorphism from $L/C_r(\lf)$ onto $R(\lf)^\op$.
\end{proof}

\noindent {\bf Remark.} Similarly to \cite[Proposition 1.8]{DW}, we obtain from the proof of
Proposition~\ref{rightmult} the four-term exact sequence of right Leibniz algebras $$0\to C_r(\lf)
\to\lf\stackrel{R}\to\der(\lf)^\op\to\out_r(\lf)^\op\to 0\,,$$ where $\out_r(\lf):=\der(\lf)/R(\lf)$
is the {\em Lie algebra of outer derivations\/} of the right Leibniz algebra $\lf$, and in which only
$\lf$ is not necessarily a Lie algebra.
\vspace{.3cm}

By combining Proposition \ref{rightker} and Proposition \ref{rightmult} with the fundamental
homomorphism theorem we obtain the following result.

\begin{cor}\label{rightleiblie}
Let $\lf$ be a right Leibniz algebra. Then the set $R(\lf)$ of right multiplication operators of $\lf$ is
a homomorphic image of the opposite of the canonical Lie algebra $\lf_\lie^\op$ associated to $\lf$.
\end{cor}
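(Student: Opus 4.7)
The plan is to chain together Propositions~\ref{rightker} and~\ref{rightmult} via the fundamental homomorphism theorem, and then to pass to opposite algebras.

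First, Proposition~\ref{rightker} gives the inclusion $\leib(\lf)\subseteq C_r(\lf)$. Hence the canonical projection $\pi\colon\lf\to\lf/C_r(\lf)$ vanishes on $\leib(\lf)$, and the fundamental homomorphism theorem produces a surjective homomorphism $\bar\pi\colon\lf_\lie=\lf/\leib(\lf)\to\lf/C_r(\lf)$. Both the source and the target are Lie algebras (by definition of $\lf_\lie$ and by Proposition~\ref{rightmult}, respectively), so $\bar\pi$ is in fact a surjective homomorphism of Lie algebras.

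Next, I would pass to opposite algebras. Since the formation of the opposite algebra is functorial and preserves surjectivity, $\bar\pi$ induces a surjective Lie algebra homomorphism $\lf_\lie^\op\to(\lf/C_r(\lf))^\op$. By Proposition~\ref{rightmult}, $\lf/C_r(\lf)\cong R(\lf)^\op$ as Lie algebras, and taking opposites of both sides yields $(\lf/C_r(\lf))^\op\cong R(\lf)$. Composing the surjection with this isomorphism exhibits $R(\lf)$ as a homomorphic image of $\lf_\lie^\op$, which is the desired conclusion.

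The argument is essentially formal, so there is no serious obstacle. The main thing to be careful about is keeping the opposite constructions properly aligned: the two applications of $\op$ (one when dualizing $\bar\pi$ and one when inverting the isomorphism of Proposition~\ref{rightmult}) must cancel so that $R(\lf)$, rather than $R(\lf)^\op$, appears as the image, and so that the source is $\lf_\lie^\op$ and not $\lf_\lie$ itself.
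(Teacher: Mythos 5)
Your proposal is correct and follows exactly the route the paper indicates: Proposition~\ref{rightker} gives $\leib(\lf)\subseteq C_r(\lf)$, the fundamental homomorphism theorem yields the surjection $\lf_\lie\to\lf/C_r(\lf)$, and Proposition~\ref{rightmult} identifies the target with $R(\lf)^\op$, after which passing to opposites gives the claim. Your care in tracking the two applications of $\op$ so that they cancel is exactly the right bookkeeping.
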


The Lie multiplication algebra of a symmetric Leibniz algebra can be described as follows. This generalizes
the corresponding result for Lie algebras (see \cite[p.~21]{S}).

\begin{thm}\label{liemult}
The Lie multiplication algebra $\lie(\lf)=L(\lf)+R(\lf)$ of a symmetric Leibniz algebra $\lf$ is an ideal
of the derivation algebra $\der(\lf)$ of $\lf$.
\end{thm}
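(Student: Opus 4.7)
My plan is to split the proof into two steps: first I will establish the equality $\lie(\lf)=L(\lf)+R(\lf)$ by showing that the subspace $L(\lf)+R(\lf)$ of $\gl(\lf)$ is already closed under the Lie bracket, and then I will use Propositions~\ref{leftmult} and~\ref{rightmult} in parallel to conclude that this subspace is an ideal of $\der(\lf)$.

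For closure under the bracket I have to check three kinds of brackets on generators. Since $\lf$ is a left Leibniz algebra, the computation from the proof of Proposition~\ref{leftmult} gives $[L_x,L_y]=L_{xy}\in L(\lf)$, and dually, since $\lf$ is a right Leibniz algebra, the computation from the proof of Proposition~\ref{rightmult} yields $[R_y,R_x]=R_{xy}\in R(\lf)$. The essential new case is the mixed bracket $[L_x,R_y]$. Here the symmetric hypothesis is doubly used: the left Leibniz identity guarantees $L_x\in\der(\lf)$, and the right Leibniz identity provides the formula $[D,R_y]=R_{D(y)}$ for every $D\in\der(\lf)$ (as established in the proof of Proposition~\ref{rightmult}). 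Combining the two, $[L_x,R_y]=R_{L_x(y)}=R_{xy}\in R(\lf)\subseteq L(\lf)+R(\lf)$. Hence all brackets of generators lie in $L(\lf)+R(\lf)$, so this sum is a Lie subalgebra and must coincide with $\lie(\lf)$.

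For the ideal property, let $D\in\der(\lf)$ and $L_x+R_y\in\lie(\lf)$ be arbitrary. Applying Proposition~\ref{leftmult} to $\lf$ viewed as a left Leibniz algebra gives $[D,L_x]=L_{D(x)}\in L(\lf)$, while Proposition~\ref{rightmult} applied to $\lf$ viewed as a right Leibniz algebra gives $[D,R_y]=R_{D(y)}\in R(\lf)$, so $[D,L_x+R_y]\in L(\lf)+R(\lf)=\lie(\lf)$, as required. The main subtlety is conceptual rather than computational: the mixed bracket $[L_x,R_y]$ cannot be controlled by either Leibniz identity in isolation, and it is precisely this cross term that the symmetric hypothesis is tailored to handle; without it the Lie subalgebra generated by $L(\lf)\cup R(\lf)$ could strictly exceed $L(\lf)+R(\lf)$.
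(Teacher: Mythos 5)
Your proof is correct and follows essentially the same route as the paper: both arguments rest on Propositions~\ref{leftmult} and~\ref{rightmult}, which make $L(\lf)$ and $R(\lf)$ ideals of $\der(\lf)$. The only difference is cosmetic: where you verify bracket-closure of $L(\lf)+R(\lf)$ generator by generator (including the mixed bracket $[L_x,R_y]=R_{xy}$), the paper gets this for free by observing that a sum of two ideals of $\der(\lf)$ is again an ideal, hence a subalgebra of $\gl(\lf)$ containing $L(\lf)\cup R(\lf)$, so the mixed-bracket case you single out is subsumed by the ideal property you establish anyway.
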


\begin{proof}
Recall that the Lie multiplication algebra $\lie(\lf)$ of the symmetric Leibniz algebra $\lf$ is the
smallest subalgebra of the general linear Lie algebra $\gl(\lf)$ that contains $L(\lf)\cup R(\lf)$.
It follows from Proposition \ref{leftmult} and Proposition \ref{rightmult} that $L(\lf)$ and $R(\lf)$
are ideals of $\der(\lf)$. But as a sum of ideals, $L(\lf)+R(\lf)$ is an ideal of $\der(\lf)$, and
thus a subalgebra of $\der(\lf)$ which in turn is a subalgebra of $\gl(\lf)$. This shows that
$\lie(\lf)=L(\lf)+R(\lf)$.
\end{proof}

\noindent {\bf Example.} Let $\nf$ be the two-dimensional nilpotent symmetric Leibniz algebra
from Example~2. Then $L(\nf)=\F f=R(\nf)$, and therefore $\lie(\nf)=\F f$. On the other hand,
$\leib(\nf)=\F e$, and thus $\nf_\lie\cong\F f\cong\lie(\nf)$.
\vspace{.3cm}

\noindent {\bf Question.} What is the relationship between the canonical Lie algebra $\lf_\lie$
associated to a symmetric Leibniz algebra $\lf$ and the Lie multiplication algebra $\lie(\lf)$ of
$\lf$? Note that if $\lf$ is a Lie algebra, then $\lie(\lf)\cong\lf/C(\lf)$ (see \cite[Theorem~1.1.2\,(4)]{SF}
or Proposition \ref{leftmult}). So in this case $\lf_\lie\ne\lie(\lf)$ if $C(\lf)\ne 0$. But as
$\leib(\nf)=C(\nf)$, the isomorphism $\lie(\lf)\cong\lf/C(\lf)$ is compatible with the previous
example. 
\vspace{.3cm}

The following results generalize \cite[Exercise I.1.6]{Bou} from Lie algebras to left and right Leibniz
algebras. As in the case of Lie algebras they are an immediate consequence of the left and right
Leibniz identity, respectively. Note the mixture of left and right in both propositions which we will
see again in Proposition \ref{leftextnilp} and Proposition \ref{rightextnilp}.

\begin{pro}\label{leftass}
A left Leibniz algebra $\lf$ is associative if, and only if, $\lf^2\subseteq C_r(\lf)$.
\end{pro}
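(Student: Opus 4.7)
The plan is to read off both implications directly from the rewritten left Leibniz identity
$$(xy)z = x(yz) - y(xz),$$
i.e. identity (\ref{RLLI}). This identity converts the associator $(xy)z - x(yz)$ into the single term $-y(xz)$, so associativity of $\lf$ is equivalent to the vanishing of $y(xz)$ for all $x,y,z \in \lf$.

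For the forward direction, suppose $\lf$ is associative, so $(xy)z = x(yz)$ for all $x,y,z$. Substituting into (\ref{RLLI}) yields $y(xz) = 0$ for all $y \in \lf$, which by the definition of the right center means $xz \in C_r(\lf)$ for all $x,z \in \lf$. Since $C_r(\lf)$ is a subspace and $\lf^2$ is the $\F$-span of such products $xz$, we conclude $\lf^2 \subseteq C_r(\lf)$.

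For the converse, assume $\lf^2 \subseteq C_r(\lf)$. Then for any $x, z \in \lf$ we have $xz \in C_r(\lf)$, so $R_{xz} = 0$ and in particular $y(xz) = 0$ for every $y \in \lf$. Plugging this into (\ref{RLLI}) gives $(xy)z = x(yz)$ for all $x, y, z$, i.e.\ $\lf$ is associative.

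There is no real obstacle here: the whole argument is a one-line manipulation of identity (\ref{RLLI}) together with the definition of $C_r(\lf)$. The only point worth stating carefully is that $\lf^2$ is the spanned subspace (not just the set of products), which is why closure of $C_r(\lf)$ under linear combinations is needed to translate ``$xz \in C_r(\lf)$ for all $x,z$'' into ``$\lf^2 \subseteq C_r(\lf)$''.
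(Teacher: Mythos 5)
Your proof is correct and is exactly the argument the paper has in mind: the paper gives no written proof, stating only that Propositions \ref{leftass} and \ref{rightass} are ``an immediate consequence of the left and right Leibniz identity,'' and your manipulation of identity (\ref{RLLI}) together with the definition of $C_r(\lf)$ is precisely that immediate consequence. Both directions and the remark about $\lf^2$ being a span are handled correctly.
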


\begin{pro}\label{rightass}
A right Leibniz algebra $\lf$ is associative if, and only if, $\lf^2\subseteq C_\ell(\lf)$.
\end{pro}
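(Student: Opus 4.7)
The plan is to derive the proposition directly from the right Leibniz identity in its equivalent form \eqref{RLI}, namely $(xy)z = (xz)y + x(yz)$. Associativity of $\lf$ is by definition the condition $(xy)z = x(yz)$ for all $x,y,z \in \lf$, so rearranging \eqref{RLI} shows that $\lf$ is associative if and only if $(xz)y = 0$ for all $x,y,z \in \lf$.

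Next, I would translate the vanishing condition $(xz)y = 0$ for all $y \in \lf$ into a left-center statement: by the definition of the left center recalled above, it is precisely the assertion that the product $xz$ lies in $C_\ell(\lf)$. Since this must hold for all choices of $x, z \in \lf$, and since $\lf^2 = \langle xz \mid x,z \in \lf\rangle_\F$ is by definition spanned by such products (and $C_\ell(\lf)$ is a subspace), associativity is equivalent to the inclusion $\lf^2 \subseteq C_\ell(\lf)$.

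No genuine obstacle is anticipated here; the proof is, as the surrounding text already indicates, an immediate unwinding of definitions once the right Leibniz identity is written in the form \eqref{RLI}. The only small point to keep track of is the asymmetry emphasized in the paper: although we use the right Leibniz identity, the relevant condition involves the \emph{left} center, which is the phenomenon the authors foreshadow by noting the mixture of left and right in the statement.
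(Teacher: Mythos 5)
Your proof is correct and is exactly the argument the paper intends: the paper gives no explicit proof, remarking only that both propositions are ``an immediate consequence of the left and right Leibniz identity,'' and your unwinding of identity (\ref{RLI}) into the vanishing of $(xz)y$, hence $xz\in C_\ell(\lf)$, hence $\lf^2\subseteq C_\ell(\lf)$, is precisely that immediate consequence.
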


These results show that, in general, left and right Leibniz algebras are far from being associative.
In fact, it follows from Proposition \ref{leftextnilp} and Proposition \ref{rightextnilp} below that
associative left/right Leibniz algebras are nilpotent.\footnote{Note that in dimension 3 there are
five isomorphism classes of nilpotent non-Lie Leibniz algebras of which four (including one 1-parameter
family) are associative (see \cite[Theorem 6.4]{DMS1}). In dimension 4 there are seventeen isomorphism
classes of indecomposable nilpotent non-Lie Leibniz algebras of which eleven (including three 1-parameter
families) are associative (see \cite[Theorem~3.2]{AOR}, but compare this with \cite{DMS3} for the
correct total number of isomorphism classes).}
\vspace{.2cm}

\noindent {\bf Examples.}
\begin{enumerate}
\item[(1)] For the two-dimensional solvable left Leibniz algebra $\af_\ell$ from Example 1 we have
                that $\af_\ell^2=\F f$ and $C_r(\af_\ell)=\F e$. Hence $\af_\ell$ is not associative.
\item[(2)] For the two-dimensional nilpotent symmetric Leibniz algebra $\nf$ from Example 2 we
                have that $\nf^2=\F e=C_\ell(\nf)=C_r(\nf)$. Hence $\nf$ is associative.
\item[(3)] For the five-dimensional simple left Leibniz algebra $\Sf_\ell$ from Example 3 we have
                that $\Sf_\ell^2=\Sf_\ell$ and $C_r(\Sf_\ell)=0$. Hence $\Sf_\ell$ is not associative.
\end{enumerate}
\vspace{.2cm}

In the sentence after the proof of \cite[Corollary 1.3]{B1} Barnes claims that left Leibniz
algebras are power-associative. But the following example shows that this is not always
the case. Let $\af_\ell$ denote the two-dimensional left Leibniz algebra from Example 1.
Then $$(e+f)^2(e+f)=0\ne f=(e+f)(e+f)^2\,,$$ which yields that $\af_\ell$ is not
power-associative.\footnote{This example also shows that left or right Leibniz algebras
are not necessarily flexible.}

The following result shows that Barnes' claim holds for symmetric Leibniz algebras. In fact,
like Lie algebras, such algebras are flexible.

\begin{pro}\label{flexible}
Every symmetric Leibniz algebra is flexible and power-associative. Moreover, $x^3=0$ for
every element $x$.
\end{pro}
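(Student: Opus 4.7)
The plan is to show everything directly from the two identities $L_{x^2}=0$ and $R_{x^2}=0$, which hold in a symmetric Leibniz algebra by Lemma~\ref{leftmultsq} and Lemma~\ref{rightmultsq}. Translated, these say that
\[
x^2y=0=yx^2
\]
for all $x,y\in\lf$, i.e.\ any product in which $x^2$ appears as a factor is zero. Each of the three assertions will follow from this observation with little more than a single line of bookkeeping.

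For flexibility, I would specialize the right Leibniz identity (\ref{RLI}) by setting $z=x$, obtaining
\[
(xy)x=(xx)y+x(yx)=x^2y+x(yx)=x(yx),
\]
since $x^2y=0$. This is exactly the flexibility identity $(xy)x=x(yx)$.

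For power-associativity, I would argue that the subalgebra $\langle x\rangle$ generated by a single element $x$ is spanned by $x$ and $x^2$: indeed $x\cdot x^2=0=x^2\cdot x$ (apply $R_{x^2}=0$ and $L_{x^2}=0$ to $x$), and $x^2\cdot x^2=0$ (apply $L_{x^2}=0$ to $x^2$), so no new elements can be produced. In this at most two-dimensional subalgebra every triple product vanishes — again because every product involving an $x^2$ factor is zero — so associativity of $\langle x\rangle$ is automatic. This also legitimizes the recursive definition $x^{n+1}:=xx^n$ and shows $x^3=x\cdot x^2=0$; the other parenthesization $(xx)x=x^2x$ also vanishes, consistently.

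I do not expect a genuine obstacle here; the only thing to be careful about is the logical order. One cannot write $x^3$ before power-associativity is established, so I would first prove flexibility, then establish that all products of three factors of $x$ (in either association) are zero, conclude power-associativity from the resulting triviality of $\langle x\rangle$, and only then phrase the conclusion as $x^3=0$.
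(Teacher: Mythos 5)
Your proposal is correct and follows essentially the same route as the paper: both arguments rest entirely on $L_{x^2}=0$ and $R_{x^2}=0$ (Lemmas~\ref{leftmultsq} and \ref{rightmultsq}), deriving flexibility by specializing one of the Leibniz identities at $z=x$ and killing the term containing a square, and then obtaining power-associativity from the vanishing of all third powers. Your spanning argument for $\langle x\rangle_{\F}$ is just a slightly more explicit version of the paper's concluding induction, and your remark about establishing well-definedness of $x^3$ before writing it matches the paper's ``third powers are well-defined and are equal to zero.''
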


\begin{proof}
Let $\lf$ be a symmetric Leibniz algebra. It follows from Lemma \ref{rightmultsq} that
$yx^2=0$ for any elements $x$ and $y$ of a right Leibniz algebra. Hence the left Leibniz
identity for $\lf$ yields $x(yx)=(xy)x+yx^2=(xy)x$ for any elements $x$ and $y$ of $\lf$,
and thus $\lf$ is flexible.

According to Lemma \ref{leftmultsq}, we have that $x^2y=0$ for any elements $x$ and
$y$ of a left Leibniz algebra. This implies that $x^2x=0=xx^2$ for any element $x\in\lf$.
In particular, third powers are well-defined and are equal to zero. Consequently, we obtain
by induction that symmetric Leibniz algebras are power-associative.\footnote{The proof
of Proposition \ref{flexible} shows that this result holds more generally for left central
Leibniz algebras or right central Leibniz algebras. The latter notions were introduced by
Mason and Yamskulna in \cite{MY}: A {\em left central Leibniz algebra\/} is a left Leibniz
algebra $\lf$ such that $\leib(\lf)\subseteq C_r(\lf)$ and a  {\em right central Leibniz
algebra\/} is a right Leibniz algebra $\lf$ such that $\leib(\lf)\subseteq C_\ell(\lf)$. It
follows from Proposition \ref{leftker} and Proposition \ref{rightker} that a left (resp.\ right)
Leibniz algebra is left (resp.\ right) central if, and only if, $\leib(\lf)\subseteq C(\lf)$.}
\end{proof}

So in this respect symmetric Leibniz algebras are only slightly more general than Lie algebras;
instead of squares of arbitrary elements of a Lie algebra being zero, cubes of arbitrary elements
of a symmetric Leibniz algebra are zero.

In general, the terms of the derived series of an algebra are only subalgebras. We finish
this section by proving that any term in the derived series of a left or a right Leibniz algebra
is indeed an ideal.

\begin{pro}\label{derser}
Let $\lf$ be a left or right Leibniz algebra. Then $\lf^{(n)}$ is an ideal of $\lf$ for every
non-negative integer $n$.
\end{pro}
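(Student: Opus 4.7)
The plan is to proceed by induction on $n$, reducing the problem to the right Leibniz case at the end by means of the opposite-algebra argument that was used earlier in the paper (for instance in the proof of Proposition~\ref{rightcen}).

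The base case $n=0$ is immediate, since $\lf^{(0)}=\lf$ is trivially an ideal of itself. For the inductive step I assume that $\lf^{(n)}$ is an ideal of $\lf$ and aim to show that $\lf^{(n+1)}=(\lf^{(n)})^2$ is as well. Because $\lf^{(n+1)}$ is spanned by products $ab$ with $a,b\in\lf^{(n)}$, and because $\F$-linearity takes care of the general case, it is enough to verify that $x(ab)\in\lf^{(n+1)}$ and $(ab)x\in\lf^{(n+1)}$ for all $x\in\lf$ and all $a,b\in\lf^{(n)}$.

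For a left Leibniz algebra this is where the two equivalent forms (\ref{LLI}) and (\ref{RLLI}) of the left Leibniz identity come in. Identity (\ref{LLI}) gives
$$x(ab)=(xa)b+a(xb),$$
and by the inductive hypothesis both $xa$ and $xb$ lie in $\lf^{(n)}$, so both summands lie in $(\lf^{(n)})^2=\lf^{(n+1)}$. Identity (\ref{RLLI}) gives
$$(ab)x=a(bx)-b(ax),$$
and again $ax,bx\in\lf^{(n)}$ by induction, so both summands lie in $\lf^{(n+1)}$. This completes the inductive step for left Leibniz algebras.

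For a right Leibniz algebra, rather than redo the calculation with (\ref{RLI}) and (\ref{LRLI}), I would invoke the opposite-algebra principle: if $\lf$ is a right Leibniz algebra then $\lf^\op$ is a left Leibniz algebra, the derived series of $\lf^\op$ coincides with that of $\lf$, and a subset of $\lf$ is an ideal of $\lf$ if and only if it is an ideal of $\lf^\op$. Applying the left Leibniz case to $\lf^\op$ then yields the claim. There is no real obstacle here; the only thing to watch is the bookkeeping in the inductive step, which is why I would write out both $x(ab)$ and $(ab)x$ explicitly using the two equivalent forms of the left Leibniz identity so that neither the left-ideal nor the right-ideal property is obtained by a product on the wrong side.
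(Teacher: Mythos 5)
Your proof is correct and follows essentially the same route as the paper: induction on $n$, using identity (\ref{LLI}) for the left-ideal property and identity (\ref{RLLI}) for the right-ideal property, with the right Leibniz case handled via the opposite algebra. The only cosmetic difference is that you carry both ideal properties in a single induction, whereas the paper runs two separate inductions; the computations are identical.
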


\begin{proof}
We only prove the result for left Leibniz algebras as this yields the result for right Leibniz
algebras by considering the opposite algebra. We first prove that $\lf^{(n)}$ is a left ideal
of $\lf$ for every non-negative integer $n$. We proceed by induction on $n$. The base
step $n=0$ is clear as $\lf\lf^{(0)}=\lf\lf\subseteq\lf=\lf^{(0)}$. For the induction step let
$n>0$ be an integer and assume that the statement is true for $n-1$. Then we obtain from
the left Leibniz identity and the induction hypothesis that
\begin{eqnarray*}
\lf\lf^{(n)} & = & \lf[\lf^{(n-1)}\lf^{(n-1)}]\subseteq[\lf\lf^{(n-1)}]\lf^{(n-1)}+\lf^{(n-1)}
[\lf\lf^{(n-1)}]\\
& \subseteq & \lf^{(n-1)}\lf^{(n-1)}=\lf^{(n)}\,.
\end{eqnarray*}

Next, we prove that $\lf^{(n)}$ is a right ideal of $\lf$ for every non-negative integer $n$.
We again proceed by induction on $n$. The base step $n=0$ is clear as $\lf^{(0)}\lf=\lf\lf
\subseteq\lf=\lf^{(0)}$. For the induction step let $n>0$ be an integer and assume that
the statement is true for $n-1$. Then we obtain from identity (\ref{RLLI}) and the induction
hypothesis that
\begin{eqnarray*}
\lf^{(n)}\lf & = & [\lf^{(n-1)}\lf^{(n-1)}]\lf\subseteq\lf^{(n-1)}[\lf^{(n-1)}\lf]\\
& \subseteq & \lf^{(n-1)}\lf^{(n-1)}=\lf^{(n)}\,.
\end{eqnarray*}
This completes the proof.
\end{proof}


\section{Leibniz modules}


In this and in the next section we consider only left Leibniz algebras. We leave it to the interested
reader to formulate the corresponding definitions and results for right Leibniz algebras (see
\cite{L,LP1}).

Let $\lf$ be a left Leibniz algebra over a field $\F$. A {\em left $\lf$-module\/} is a vector space
$M$ over $\F$ with an $\F$-bilinear left $\lf$-action $\lf\times M\to M$, $(x,m)\mapsto x\cdot m$
such that $$(xy)\cdot m=x\cdot(y\cdot m)-y\cdot(x\cdot m)$$ is satisfied for every $m\in M$
and all $x,y\in\lf$.

The usual definitions of the notions of {\em submodule\/}, {\em irreducibility\/}, {\em complete
reducibility\/}, {\em composition series\/}, {\em homomorphism\/}, {\em isomorphism\/}, etc.,
hold for left Leibniz modules.

Moreover, every left $\lf$-module $M$ gives rise to a homomorphism $\lambda:\lf\to\gl(M)$ of
left Leibniz algebras, defined by $\lambda_x(m):=x\cdot m$, and vice versa. We call $\lambda$
the {\em left representation\/} of $\lf$ associated to $M$. We call the kernel of $\lambda$ the
{\em annihilator\/} of $M$ and denote it by $\ann_\lf(M)$.
\vspace{.2cm}

\noindent {\bf Examples.}
\begin{enumerate}
\item[(1)] Every left Leibniz algebra is a left module over itself via the Leibniz multiplication.
                This module is called the {\em left adjoint module\/} and will be denoted by $\lf_{
                \ad,\ell}$. The associated representation $L:\lf\to\gl(\lf)$ is called the {\em left
                adjoint representation\/} of $\lf$. Note that $\ann_\lf(\lf_{\ad,\ell})=C_\ell(\lf)$.
\item[(2)] The ground field $\F$ of any left Leibniz algebra is a left $\lf$-module via $x\cdot
                \alpha:=0$ for every element $x\in\lf$ and every scalar $\alpha\in\F$. This module
                is called the {\em trivial left module\/} of $\lf$.
\end{enumerate}
\vspace{.2cm}

The next result generalizes the second part of Proposition \ref{leftker} and reduces the study
of left Leibniz modules to Lie modules.

\begin{lem}\label{leftann}
Let $\lf$ be a left Leibniz algebra, and let $M$ be a left $\lf$-module. Then $\leib(\lf)\subseteq
\ann_\lf(M)$. 
\end{lem}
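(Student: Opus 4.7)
The plan is to mimic the proof of Lemma \ref{leftmultsq}, but applied to the module action rather than the algebra multiplication. Since $\leib(\lf)$ is by definition the $\F$-linear span of the squares $x^2$ with $x\in\lf$, and since $\ann_\lf(M)=\Ker(\lambda)$ is a subspace of $\lf$, it suffices to show that $x^2\in\ann_\lf(M)$ for every $x\in\lf$, i.e., that $x^2\cdot m=0$ for all $x\in\lf$ and all $m\in M$.

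To see this, I would take the defining identity of a left $\lf$-module,
\[
(xy)\cdot m = x\cdot(y\cdot m) - y\cdot(x\cdot m),
\]
and specialize $y:=x$. The right-hand side becomes $x\cdot(x\cdot m)-x\cdot(x\cdot m)=0$, so that $x^2\cdot m=0$, as desired. Equivalently, $\lambda_{x^2}=\lambda_x\circ\lambda_x-\lambda_x\circ\lambda_x=0$, which exhibits $x^2\in\Ker(\lambda)=\ann_\lf(M)$.

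There is essentially no obstacle; the only thing to notice is that the argument is truly the module-theoretic shadow of Lemma \ref{leftmultsq}: the left adjoint module recovers that lemma as the special case $M=\lf_{\ad,\ell}$, and conversely the present lemma generalizes the statement $\leib(\lf)\subseteq C_\ell(\lf)$ from Proposition \ref{leftker}. No properties of $\lf$ beyond the bilinearity of the action and the single defining module identity are used, so no finite-dimensionality or irreducibility hypotheses on $M$ are needed.
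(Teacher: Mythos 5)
Your proof is correct and is essentially identical to the paper's: both reduce to showing $x^2\in\Ker(\lambda)$ and obtain this from $\lambda_{x^2}=\lambda_x\circ\lambda_x-\lambda_x\circ\lambda_x=0$, i.e., by setting $y:=x$ in the defining module identity. The surrounding remarks about the relation to Lemma \ref{leftmultsq} and Proposition \ref{leftker} are accurate but not needed for the argument.
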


\begin{proof}
Since $\leib(\lf)$ is a subspace of $\lf$, it is enough to show that $x^2\in\ann_\lf(M)$ for any
element $x\in\lf$. We have that $\lambda_{x^2}=\lambda_x\circ\lambda_x-\lambda_x\circ
\lambda_x=0$, and therefore it follows that $x^2\in\Ker(\lambda)=\ann_\lf(M)$.
\end{proof}

By virtue of Lemma \ref{leftann}, every left $\lf$-module is an $\lf_\lie$-module, and vice
versa. This is the reason that in \cite[D\'efinition 1.1.14]{B} left Leibniz modules are called
Lie modules. Consequently, many properties of left $\lf$-modules follow from the corresponding
properties of $\lf_\lie$-modules. As one application of this point of view, we discuss trace
forms associated to finite-dimensional left Leibniz modules (see also \cite{AAO2,DMS1}).

For any finite-dimensional left representation $\lambda:\lf\to\gl(M)$ of a left Leibniz algebra
$\lf$ we define its {\em trace form\/} $\kappa_\lambda:\lf\times\lf\to\F$ by $\kappa_\lambda
(x,y):=\tr(\lambda_x\circ\lambda_y)$. Every trace form is an invariant symmetric bilinear form.
As usual, a bilinear form $\beta:\lf\times\lf\to\F$ is called {\it invariant\/} if $\beta(xy,z)=\beta
(x,yz)$ for any elements $x,y,z\in\lf$. The subspace $$\lf^\lambda:=\{x\in\lf\mid\forall\,y\in\lf:
\kappa_\lambda(x,y)=0\}$$ of $\lf$ is called the {\em radical\/} of $\kappa_\lambda$.
\vspace{.2cm}

\noindent {\bf Example.} As for Lie algebras, the trace form $\kappa:=\kappa_L$ associated
to the left adjoint representation $L$ of a finite-dimensional left Leibniz algebra $\lf$ is called
the {\em Killing form\/} of $\lf$. We will denote its radical by $\lf^\perp$.
\vspace{.2cm}

The next result is a consequence of Lemma \ref{leftann} and the invariance as well as the
symmetry of $\kappa_\lambda$.

\begin{lem}\label{leftrad}
Let $\lf$ be a left Leibniz algebra, and let $\lambda:\lf\to\gl(M)$ be a finite-dimensional left
representation of $\lf$. Then $\lf^\lambda$ is an ideal of $\lf$ with $\leib(\lf)\subseteq
\lf^\lambda$.
\end{lem}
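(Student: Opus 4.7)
The plan is to use the symmetry and invariance of $\kappa_\lambda$, both asserted in the paragraph just before the lemma, and to handle the two claims separately. The argument will closely parallel the classical proof for the radical of the Killing form of a Lie algebra, with Lemma \ref{leftann} playing the role of the elementary fact that $\lambda_{x^2}=0$.

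For the containment $\leib(\lf)\subseteq\lf^\lambda$, I will first note that since $\lf^\lambda$ is a linear subspace of $\lf$ and $\leib(\lf)$ is spanned by the squares $x^2$, it suffices to show $x^2\in\lf^\lambda$ for every $x\in\lf$. By Lemma \ref{leftann}, $x^2\in\ann_\lf(M)=\Ker(\lambda)$, so $\lambda_{x^2}=0$, and therefore $\kappa_\lambda(x^2,y)=\tr(\lambda_{x^2}\circ\lambda_y)=0$ for every $y\in\lf$.

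To show that $\lf^\lambda$ is an ideal, I will fix $x\in\lf^\lambda$ and $z\in\lf$ and verify both $xz\in\lf^\lambda$ and $zx\in\lf^\lambda$. The first follows immediately from invariance: $\kappa_\lambda(xz,y)=\kappa_\lambda(x,zy)=0$ for all $y\in\lf$. The second will be obtained by sandwiching one application of invariance between two applications of symmetry, namely
$\kappa_\lambda(zx,y)=\kappa_\lambda(y,zx)=\kappa_\lambda(yz,x)=\kappa_\lambda(x,yz)=0$.

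No step in this proof is genuinely difficult: symmetry of $\kappa_\lambda$ follows from the cyclic property of the trace, and invariance follows from $\lambda_{xy}=[\lambda_x,\lambda_y]$ combined with the same cyclic property. The only mild subtlety I anticipate is the one just flagged, that invariance alone handles only the right-ideal side (it moves a factor from the right of the first slot to the left of the second slot), so symmetry has to be invoked twice to transport the computation to the left-ideal side.
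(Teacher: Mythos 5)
Your proposal is correct and follows essentially the same route as the paper's proof: the containment $\leib(\lf)\subseteq\lf^\lambda$ is deduced from Lemma \ref{leftann}, the right-ideal property comes from a single application of invariance, and the left-ideal property from the same symmetry--invariance--symmetry chain $\kappa_\lambda(zx,y)=\kappa_\lambda(y,zx)=\kappa_\lambda(yz,x)=\kappa_\lambda(x,yz)$ (with your $x$ and $z$ playing the roles of the paper's $z$ and $x$). The only difference is that you spell out the trace computation $\kappa_\lambda(x^2,y)=\tr(\lambda_{x^2}\circ\lambda_y)=0$, which the paper leaves implicit.
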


\begin{proof}
The inclusion $\leib(\lf)\subseteq\lf^\lambda$ follows immediately from Lemma \ref{leftann}.

Let $x,y\in\lf$ and $z\in\lf^\lambda$ be arbitrary elements. Then we obtain from the invariance of
$\kappa_\lambda$ that $\kappa_\lambda(zx,y)=\kappa_\lambda(z,xy)=0$, i.e., $zx\in\lf^\lambda$.
This shows that $\lf^\lambda$ is a right ideal of $\lf$.

Moreover, using the symmetry in conjunction with the invariance of $\kappa_\lambda$, we conclude
that $\kappa_\lambda(xz,y)=\kappa_\lambda(y,xz)=\kappa_\lambda(yx,z)=\kappa_\lambda(z,yx)
=0$, i.e., $xz\in\lf^\lambda$. This proves that $\lf^\lambda$ is a left ideal of $\lf$.
\end{proof}

Lemma \ref{leftrad} implies that a trace form on a left non-Lie Leibniz algebra is never non-degenerate.
We call a trace form associated to a finite-dimensional left representation $\lambda$ of a left Leibniz
algebra $\lf$ {\em minimally degenerate\/}\footnote{In \cite[Definition 5.6]{DMS1} a minimally degenerate
Killing form is called {\em non-degenerate\/}, but this contradicts the usual definition of a non-degenerate
bilinear form. Note that non-Lie Leibniz algebras can admit non-degenerate invariant symmetric bilinear
forms, which, of course, cannot be trace forms. For example, this is the case for the two-dimensional
nilpotent symmetric Leibniz algebra $\nf$ from Example 2 in Section 2 (see \cite[Example 2.2]{BH}).} if
$\lf^\lambda=\leib(\lf)$.
\vspace{.2cm}

\noindent {\bf Example.} Let $\af_\ell$ be the two-dimensional solvable left Leibniz algebra from
Example 1 in Section 2. Recall that $\leib(\af_\ell)=\F f\ne 0$. Since $\kappa(e,e)=1\ne 0$ yields
$\af_\ell^\perp\ne\af_\ell$, we conclude from Lemma \ref{leftrad} that $\af_\ell^\perp=\leib
(\af_\ell)$. Hence the Killing form of $\af_\ell$ is minimally degenerate.
\vspace{.2cm}

The correct concept of a module for left Leibniz algebras is the notion of a Leibniz bimodule.
In order to motivate the appropriate definition of a bimodule for a left Leibniz algebra, we
follow the approach that Eilenberg proposed for any given class of non-associative algebras
(see \cite{E} and \cite[pp.\ 25/26]{S}).

Let $\lf$ be a left Leibniz algebra over a field $\F$, and let $M$ be a vector space over the
same ground field. Then the Cartesian product $\lf\times M$ with componentwise addition
and componentwise scalar multiplication is a vector space over $\F$. Suppose that $\lf$ acts
on $M$ from the left via the $\F$-bilinear map $\lf\times M\to M$, $(x,m)\mapsto x\cdot m$
and $\lf$ acts on $M$ from the right via the $\F$-bilinear map $M\times\lf\to M$, $(m,x)\mapsto
m\cdot x$. Then we define a multiplication on $\lf\times M$ by $$(x_1,m_1)(x_2,m_2):=(x_1x_2,
m_1\cdot x_2+x_1\cdot m_2)$$ for any elements $x_1,x_2\in\lf$ and $m_1,m_2\in M$. To
ensure that $\lf\times M$ satisfies the left Leibniz identity, we compute
\begin{eqnarray*}
(x_1,m_1)[(x_2,m_2)(x_3,m_3)] & = & (x_1,m_1)(x_2x_3,m_2\cdot x_3+x_2\cdot m_3)\\
& = & (x_1(x_2x_3),m_1\cdot(x_2x_3)+x_1\cdot(m_2\cdot x_3+x_2\cdot m_3))\\
& = & (x_1(x_2x_3),m_1\cdot(x_2x_3)+x_1\cdot(m_2\cdot x_3)+x_1\cdot(x_2\cdot m_3))\,,
\end{eqnarray*}
\begin{eqnarray*}
[(x_1,m_1)(x_2,m_2)](x_3,m_3) & = & (x_1x_2,m_1\cdot x_2+x_1\cdot m_2)(x_3,m_3)\\
& = & ((x_1x_2)x_3,(m_1\cdot x_2+x_1\cdot m_2)\cdot x_3+(x_1x_2)\cdot m_3))\\
& = & ((x_1x_2)x_3,(m_1\cdot x_2)\cdot x_3+(x_1\cdot m_2)\cdot x_3+(x_1x_2)\cdot m_3))\,,
\end{eqnarray*}
\begin{eqnarray*}
(x_2,m_2)[(x_1,m_1)(x_3,m_3)] & = & (x_2,m_2)(x_1x_3,m_1\cdot x_3+x_1\cdot m_3)\\
& = & (x_2(x_1x_3),m_2\cdot(x_1x_3)+x_2\cdot(m_1\cdot x_3+x_1\cdot m_3))\\
& = & (x_2(x_1x_3),m_2\cdot(x_1x_3)+x_2\cdot(m_1\cdot x_3)+x_2\cdot(x_1\cdot m_3))
\end{eqnarray*}
for any elements $x_1,x_2,x_3\in\lf$ and $m_1,m_2,m_3\in M$. The vector space $\lf\times M$
satisfies the left Leibniz identity if, and only if, the left-hand side of the first identity equals the
sum of the left-hand sides of the second and third identities. Hence the right-hand side of the
first identity must equal the sum of the right-hand sides of the second and third identities. For
the first components of the right-hand sides this is just the left Leibniz identity for $\lf$. From the
desired equality for the second components of the right-hand sides one can read off the following
three identities which are sufficient for $\lf\times M$ to satisfy the left Leibniz identity.
\begin{equation}\label{LLM}
x\cdot(y\cdot m)=(xy)\cdot m+y\cdot(x\cdot m)
\end{equation}
\begin{equation}\label{LML}
x\cdot(m\cdot y)=(x\cdot m)\cdot y+m\cdot(xy)
\end{equation}
\begin{equation}\label{MLL}
m\cdot(xy)=(m\cdot x)\cdot y+x\cdot(m\cdot y)
\end{equation}
for every $m\in M$ and all $x,y\in\lf$.

This motivates the following definition. An {\em $\lf$-bimodule\/} is a vector space $M$ with
a bilinear left $\lf$-action and a bilinear right $\lf$-action such that the following compatibility
conditions are satisfied:
\begin{enumerate}
\item[(LLM)] \hspace{2.5cm}$(xy)\cdot m=x\cdot(y\cdot m)-y\cdot(x\cdot m)$
\item[(LML)] \hspace{2.5cm}$(x\cdot m)\cdot y=x\cdot(m\cdot y)-m\cdot(xy)$
\item[(MLL)] \hspace{2.5cm}$(m\cdot x)\cdot y=m\cdot(xy)-x\cdot(m\cdot y)$
\end{enumerate}
for every $m\in M$ and all $x,y\in\lf$.

It is an immediate consequence of (LLM) that every Leibniz bimodule is a left Leibniz module.
Moreover, by combining (LML) and (MLL) we obtain the following result.

\begin{lem}\label{LRRR}
Let $\lf$ be a left Leibniz algebra, and let $M$ be an $\lf$-bimodule. Then $(x\cdot m)\cdot
y+(m\cdot x)\cdot y=0$ holds for every $m\in M$ and all $x,y\in\lf$. 
\end{lem}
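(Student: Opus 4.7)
The plan is very short: just add identities (LML) and (MLL) and observe that the right-hand sides cancel.

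More explicitly, I would fix $m \in M$ and $x, y \in \lf$ and write down the two relevant defining identities of an $\lf$-bimodule:
\[
(x \cdot m) \cdot y = x \cdot (m \cdot y) - m \cdot (xy),
\]
\[
(m \cdot x) \cdot y = m \cdot (xy) - x \cdot (m \cdot y).
\]
Adding these two equations, the terms $\pm m \cdot (xy)$ cancel and the terms $\pm x \cdot (m \cdot y)$ cancel, yielding $(x \cdot m) \cdot y + (m \cdot x) \cdot y = 0$, as claimed.

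There is essentially no obstacle here; the statement is a direct algebraic consequence of the two defining bimodule identities (LML) and (MLL), and the hint in the statement itself already tells us which identities to combine. The only conceptual content worth noting is the interpretation: Lemma \ref{LRRR} says that the right action of $\lf$ on $M$ annihilates all elements of the form $x \cdot m + m \cdot x$, which is a useful preparation for later results on the antisymmetric kernel of a Leibniz bimodule mentioned in the introduction.
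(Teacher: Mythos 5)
Your proof is correct and is exactly the paper's argument: the lemma is stated there as an immediate consequence of adding the bimodule identities (LML) and (MLL), whose right-hand sides cancel termwise. Nothing to add.
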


As for left Leibniz modules, the usual definitions of the notions of {\em subbimodule\/}, {\em
irreducibility\/}, {\em complete reducibility\/}, {\em composition series\/}, {\em homomorphism\/},
{\em isomorphism\/}, etc., hold for Leibniz bimodules.

Let $\lf$ be a left Leibniz algebra over a field $\F$, and let $V$ be a vector space over $\F$.
Then a pair $(\lambda,\rho)$ of linear transformations $\lambda:\lf\to\End_\F(V)$ and $\rho:
\lf\to\End_\F(V)$ is called a {\em representation\/} of $\lf$ on $V$ if the following conditions
are satisfied:
\begin{equation}\label{LLMrep}
\lambda_{xy}=\lambda_x\circ\lambda_y-\lambda_y\circ\lambda_x
\end{equation}
\begin{equation}\label{LMLrep}
\rho_{xy}=\lambda_x\circ\rho_y-\rho_y\circ\lambda_x
\end{equation}
\begin{equation}\label{MLLrep}
\rho_{xy}=\rho_y\circ\rho_x+\lambda_x\circ\rho_y
\end{equation}
for every $m\in M$ and all $x,y\in\lf$.

Then every $\lf$-bimodule $M$ gives rise to a representation $(\lambda,\rho)$ of $\lf$ on $M$
via $\lambda_x(m):=x\cdot m$ and $\rho_x(m):=m\cdot x$. Conversely, every representation
$(\lambda,\rho)$ of $\lf$ on $M$ defines an $\lf$-bimodule structure on $M$ via $x\cdot m:=
\lambda_x(m)$ and $m\cdot x:=\rho_x(m)$.

In order to avoid confusion, the {\em annihilator\/} of an $\lf$-bimodule $M$ will be denoted by
$\ann_\lf^\bi(M)$, and it is defined as $\ann_\lf^\bi(M):=\Ker(\lambda)\cap\Ker(\rho)$.

An $\lf$-bimodule $M$ is called {\em symmetric\/} if $m\cdot x=-x\cdot m$ for every $x\in\lf$
and every $m\in M$. An $\lf$-bimodule $M$ is called {\em anti-symmetric\/} if $m\cdot x=0$
for every $x\in\lf$ and every $m\in M$. Moreover, an $\lf$-bimodule $M$ is called {\em trivial\/}
if $x\cdot m=0=m\cdot x$ for every $x\in\lf$ and every $m\in M$. Note that an $\lf$-bimodule
$M$ is trivial if, and only if, $M$ is symmetric and anti-symmetric. Moreover, $\ann_\lf^\bi(M)=
\ann_\lf(M)$ if $M$ is symmetric or anti-symmetric and $\ann_\lf^\bi(M)=\lf$ if, and only if, $M$
is trivial.
\vspace{.2cm}

\noindent {\bf Examples.}
\begin{enumerate}
\item[(1)] Every left Leibniz algebra $\lf$ is a bimodule over itself via the Leibniz multiplication,
                the so-called {\em adjoint bimodule\/} $\lf_\ad$ of $\lf$. The associated representation
                is $(L,R)$, where $L$ denotes the left multiplication operator of $\lf$ and $R$ denotes
                the right multiplication operator of $\lf$. This representation is called the {\em adjoint
                representation\/} of $\lf$. Note that $\ann_\lf^\bi(\lf_\ad)=C(\lf)$. Moreover, unless
                the ground field has characteristic two, the adjoint bimodule $\lf_\ad$ is symmetric if,
                and only if, $\lf$ is a Lie algebra.\footnote{In particular, the adjoint bimodule of a
                symmetric Leibniz algebra is not always symmetric as the name might suggest.}
\item[(2)] The ground field $\F$ of any left Leibniz algebra $\lf$ is a trivial $\lf$-bimodule via
                $x\cdot\alpha:=0=:\alpha\cdot x$ for every element $x\in\lf$ and every scalar
                $\alpha\in\F$.
\end{enumerate}
\vspace{.1cm}

\begin{lem}\label{ann}
Let $\lf$ be a left Leibniz algebra, and let $M$ be an anti-symmetric or a symmetric $\lf$-bimodule.
Then $\leib(\lf)\subseteq\ann_\lf^\bi(M)$. 
\end{lem}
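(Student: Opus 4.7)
The plan is to show separately that $\leib(\lf)$ is contained in $\Ker(\lambda)$ and in $\Ker(\rho)$, since $\ann_\lf^\bi(M) = \Ker(\lambda)\cap\Ker(\rho)$ by definition. It suffices to verify that $\lambda_{x^2}=0$ and $\rho_{x^2}=0$ for every $x\in\lf$, because $\leib(\lf)$ is spanned by such squares and both kernels are subspaces.

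The inclusion $\leib(\lf)\subseteq\Ker(\lambda)$ is essentially free: every $\lf$-bimodule is in particular a left $\lf$-module via (LLM), so Lemma \ref{leftann} (applied to the underlying left module structure) gives $\leib(\lf)\subseteq\ann_\lf(M)=\Ker(\lambda)$. Equivalently, setting $y=x$ in identity (\ref{LLMrep}) yields $\lambda_{x^2}=\lambda_x\circ\lambda_x-\lambda_x\circ\lambda_x=0$. This part uses neither symmetry nor anti-symmetry.

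The content of the lemma is therefore the second inclusion $\leib(\lf)\subseteq\Ker(\rho)$, and this is precisely where the hypothesis on $M$ enters. If $M$ is anti-symmetric, then $\rho_x=0$ identically, so $\rho_{x^2}=0$ trivially. If $M$ is symmetric, then $\rho_x(m)=m\cdot x=-x\cdot m=-\lambda_x(m)$ for all $m\in M$, so $\rho_x=-\lambda_x$, and hence $\rho_{x^2}=-\lambda_{x^2}=0$ by the first part. There is no real obstacle here; the only subtlety is to be explicit that the argument splits into the two cases allowed by the hypothesis, since for a general bimodule $\rho_{x^2}$ need not vanish (one could, alternatively, note that identity (\ref{MLLrep}) with $y=x$ gives $\rho_{x^2}=\rho_x\circ\rho_x+\lambda_x\circ\rho_x$, which does not a priori force $\rho_{x^2}=0$ without extra information on the relationship between $\lambda$ and $\rho$).
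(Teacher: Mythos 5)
Your proof is correct and follows essentially the same route as the paper: reduce $\Ker(\lambda)$ to Lemma \ref{leftann} via the underlying left module structure, then handle $\Ker(\rho)$ by the two cases $\rho=0$ (anti-symmetric) and $\rho_x=-\lambda_x$ (symmetric). The extra remark about identity (\ref{MLLrep}) is accurate but not needed.
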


\begin{proof}
Since every $\lf$-bimodule is a left $\lf$-module, it follows from Lemma \ref{leftann} that $\leib(\lf)
\subseteq\Ker(\lambda)$. If $M$ is anti-symmetric, then $\rho=0$, and the assertion is clear.
On the other hand, if $M$ is symmetric, then $\lambda_x=-\rho_x$ for every element $x\in\lf$.
Hence $\Ker(\rho)=\Ker(\lambda)$, and therefore $\leib(\lf)\subseteq\Ker(\rho)$ follows from
the already established inclusion $\leib(\lf)\subseteq\Ker(\lambda)$.
\end{proof}

Let $\lf$ be a left Leibniz algebra over a field $\F$, and let $M$ be an $\lf$-bimodule. We call $$M_0
:=\langle x\cdot m+m\cdot x\mid x\in\lf,m\in M\rangle_\F$$ the {\em anti-symmetric kernel\/} of $M$
(see \cite[p.~145]{LP1}).
\vspace{.2cm}

\noindent {\bf Example} (cf.\ also \cite[p.\ 479]{FM}){\bf .}
Let $\lf$ be a left Leibniz algebra. Then the identity $$xy+yx=(x+y)^2-x^2-y^2$$ shows that
$(\lf_\ad)_0\subseteq\leib(\lf)$. Moreover, if the ground field of $\lf$ has characteristic $\ne 2$,
then the identity $x^2=\frac{1}{2}(xx+xx)$ shows that $(\lf_\ad)_0=\leib(\lf)$.
\vspace{.2cm}

The next result generalizes Proposition \ref{leftker} to arbitrary Leibniz bimodules (see \cite[p.~145]{LP1}
and also the proof of Theorem 3.1 in \cite{FM}).

\begin{pro}\label{antisymker}
Let $\lf$ be a left Leibniz algebra, and let $M$ be an $\lf$-bimodule. Then $M_0$ is an anti-symmetric
$\lf$-subbimodule of $M$.
\end{pro}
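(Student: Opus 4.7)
The plan is to verify separately that $M_0$ is closed under the right $\lf$-action, that it is closed under the left $\lf$-action, and that it is anti-symmetric. Since $M_0$ is by definition a subspace, it suffices in each case to check the claim on a spanning element $x\cdot m+m\cdot x$.

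First I would handle the right action, which turns out to simultaneously give anti-symmetry. By Lemma~\ref{LRRR}, for any $x,y\in\lf$ and $m\in M$ we have
$$(x\cdot m+m\cdot x)\cdot y=(x\cdot m)\cdot y+(m\cdot x)\cdot y=0\,.$$
Thus $M_0\cdot\lf=0$, which shows both that $M_0$ is stable under the right action (trivially) and that $M_0$ is anti-symmetric.

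For the left action I would use identities (LLM) and (LML). Given $x,y\in\lf$ and $m\in M$, (LLM) yields $y\cdot(x\cdot m)=(yx)\cdot m+x\cdot(y\cdot m)$, while (LML) rewritten as $y\cdot(m\cdot x)=(y\cdot m)\cdot x+m\cdot(yx)$. Adding these two equalities gives
$$y\cdot(x\cdot m+m\cdot x)=\bigl[(yx)\cdot m+m\cdot(yx)\bigr]+\bigl[x\cdot(y\cdot m)+(y\cdot m)\cdot x\bigr]\,,$$
and each bracketed term is one of the defining generators of $M_0$. Hence $y\cdot(x\cdot m+m\cdot x)\in M_0$, so $\lf\cdot M_0\subseteq M_0$.

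There is no genuine obstacle here: the whole argument is a direct application of the bimodule axioms, with Lemma~\ref{LRRR} doing the heavy lifting by simultaneously yielding right-stability and anti-symmetry. The only point requiring a moment's care is the left-action step, where one has to recognize that the four terms produced by (LLM) and (LML) regroup into two generators of $M_0$ rather than collapsing further.
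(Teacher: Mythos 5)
Your proof is correct and follows essentially the same route as the paper's: the left-action computation is exactly the paper's regrouping of the four terms from (LLM) and (LML) into two generators of $M_0$ (just with the roles of $x$ and $y$ interchanged), and the paper likewise disposes of right-stability and anti-symmetry in one stroke via Lemma~\ref{LRRR}.
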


\begin{proof}
Let $x,y\in\lf$ and $m\in M$ be arbitrary elements. Then we obtain from (\ref{LLM}) and (\ref{LML})
that
\begin{eqnarray*}
x\cdot(y\cdot m+m\cdot y) & = & x\cdot(y\cdot m)+x\cdot(m\cdot y)\\
& = & [(xy)\cdot m+m\cdot(xy)]+[y\cdot(x\cdot m)+(x\cdot m)\cdot y]\in M_0\,.
\end{eqnarray*}
Since $M_0$ is a subspace of $M$, we conclude that $\lf M_0\subseteq M_0$. Moreover, it is an
immediate consequence of Lemma \ref{LRRR} that $M_0\lf=0$.
\end{proof}

By definition of the anti-symmetric kernel, $M_\sym:=M/M_0$ is a symmetric $\lf$-bimodule. We
call $M_\sym$ the {\em symmetrization\/} of $M$. In fact, the anti-symmetric kernel is the smallest
subbimodule such that the corresponding factor module is symmetric. (This should be compared
with the analogous statement for the adjoint Leibniz bimodule in Proposition \ref{minlie}.)

\begin{pro}\label{sym}
Let $\lf$ be a left Leibniz algebra, and let $M$ be an $\lf$-bimodule. Then $M_0$ is the smallest
$\lf$-subbimodule $N$ of $M$ such that $M/N$ is a symmetric $\lf$-bimodule.
\end{pro}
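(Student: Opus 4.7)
The plan is to exploit exactly the same idea used in Proposition \ref{minlie}: the defining generators of $M_0$ are forced into any subbimodule whose quotient has the relevant structural property, and then one invokes the fact that $M_0$ is a subspace to conclude.

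More concretely, I would let $N$ be any $\lf$-subbimodule of $M$ such that $M/N$ is symmetric, and write $\pi:M\to M/N$ for the canonical projection. Symmetry of $M/N$ means that $\pi(m)\cdot x=-x\cdot\pi(m)$ for all $x\in\lf$ and $m\in M$, or equivalently $\pi(x\cdot m+m\cdot x)=0$. Hence $x\cdot m+m\cdot x\in\Ker(\pi)=N$ for every $x\in\lf$ and every $m\in M$. Since $N$ is an $\F$-subspace and $M_0$ is, by definition, the $\F$-span of all such elements $x\cdot m+m\cdot x$, this yields $M_0\subseteq N$.

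The reverse direction, namely that $M_0$ itself is an $\lf$-subbimodule of $M$ with $M/M_0$ symmetric, has already been handled: Proposition \ref{antisymker} shows that $M_0$ is an $\lf$-subbimodule, and by construction the images of $x\cdot m$ and $-m\cdot x$ coincide in $M/M_0$, so $M_\sym$ is symmetric. Combining these two observations completes the proof. There is no real obstacle here; the only thing to be careful about is to distinguish between the condition that generators of $M_0$ lie in $N$ (which is what symmetry of $M/N$ directly gives) and the stronger condition that $M_0\subseteq N$, which follows immediately from the fact that $N$, being a subbimodule, is in particular a subspace.
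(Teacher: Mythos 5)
Your argument is correct and is essentially identical to the paper's own proof: symmetry of $M/N$ forces each generator $x\cdot m+m\cdot x$ into $N$, and since $N$ is a subspace this gives $M_0\subseteq N$, while Proposition \ref{antisymker} together with the definition of $M_0$ supplies the fact that $M_0$ itself is a subbimodule with symmetric quotient. No issues.
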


\begin{proof}
Let $N$ be a $\lf$-subbimodule of $M$ such that $M/N$ is a symmetric $\lf$-bimodule.
Then it follows from the symmetry of $M/N$ that $x\cdot(m+N)=-(m+N)\cdot x$, and consequently,
$x\cdot m+m\cdot x\in N$ for every element $x\in\lf$ and every element $m\in M$. Since $N$ is a
subspace of $M$, we conclude that $M_0\subseteq N$.
\end{proof}

The next result was first established for finite-dimensional irreducible Leibniz bimodules over
finite-dimensional right or left Leibniz algebras (see \cite[Theorem~3.1]{LP2} and \cite[Theorem
1.4]{B2}). Then in \cite[Theorem 3.1]{FM} this was generalized to arbitrary irreducible Leibniz
bimodules over any right Leibniz algebra. At the same time the proof is simplified considerably.

\begin{thm}\label{irrbimod}
Let $\lf$ be a left Leibniz algebra. Then every irreducible $\lf$-bimodule is symmetric or anti-symmetric.
\end{thm}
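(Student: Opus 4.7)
The plan is to exploit the anti-symmetric kernel $M_0$ introduced just before the theorem, applying Proposition \ref{antisymker} together with irreducibility in the spirit of a Schur-type dichotomy.

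First I would observe that by Proposition \ref{antisymker}, the anti-symmetric kernel $M_0 = \langle x\cdot m + m\cdot x \mid x\in\lf, m\in M\rangle_\F$ is an $\lf$-subbimodule of $M$, and moreover is anti-symmetric as a bimodule in its own right. Since $M$ is irreducible, there are only two possibilities for the subbimodule $M_0$: either $M_0 = 0$ or $M_0 = M$.

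In the case $M_0 = 0$, the defining generators of $M_0$ all vanish, i.e., $x\cdot m + m\cdot x = 0$ for every $x\in\lf$ and every $m\in M$, which is exactly the condition that $M$ be symmetric. In the case $M_0 = M$, the bimodule $M$ coincides with its anti-symmetric kernel, which by Proposition \ref{antisymker} is anti-symmetric, so $M$ itself is anti-symmetric. This exhausts the dichotomy and yields the conclusion.

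There is no real obstacle here, because all of the work has already been absorbed into the construction of $M_0$ and the proof that it is an anti-symmetric subbimodule (Proposition \ref{antisymker}). The only thing one needs to be careful about is the observation that $M_0 = 0$ is literally equivalent to symmetry of $M$, which follows immediately from the definition of $M_0$ as the $\F$-span of the elements $x\cdot m + m\cdot x$ (vanishing of a spanning set is equivalent to vanishing of every generator). This is exactly the short ``Schur's lemma type'' proof advertised in the introduction, and unlike the arguments in \cite[Theorem 3.1]{LP2} and \cite[Theorem 1.4]{B2}, it requires no finiteness hypothesis on $\lf$ or on $M$.
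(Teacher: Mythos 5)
Your proposal is correct and follows essentially the same route as the paper: the paper's proof likewise invokes Proposition \ref{antisymker} to conclude that the anti-symmetric kernel $M_0$ is a subbimodule, uses irreducibility to force $M_0=0$ or $M_0=M$, and reads off symmetry in the first case and anti-symmetry in the second. Your added remark that $M_0=0$ is literally equivalent to the symmetry condition (since $M_0$ is spanned by the elements $x\cdot m+m\cdot x$) is exactly the implicit step in the paper's one-line argument.
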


\begin{proof}
Let $M$ be an irreducible $\lf$-bimodule. According to Proposition \ref{antisymker}, we have that $M_0
=0$ or $M_0=M$. In the former case $M$ is symmetric and in the latter case $M$ is anti-symmetric.
\end{proof}

The final result of this section discusses how a left Leibniz module can be made into an anti-symmetric
Leibniz bimodule or into a symmetric Leibniz bimodule.

\begin{pro}\label{leftmod}
Let $\lf$ be a left Leibniz algebra. Then the following statements hold:
\begin{enumerate}
\item[{\rm (a)}] Every left $\lf$-module with a trivial right action is an anti-symmetric $\lf$-bimodule.
\item[{\rm (b)}] Every left $\lf$-module determines a unique symmetric $\lf$-bimodule.
\end{enumerate}
\end{pro}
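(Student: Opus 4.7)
The plan is to exhibit explicit right actions in each case and then verify the three bimodule identities (LLM), (LML), (MLL). Throughout, let $M$ be a left $\lf$-module with left action $x\cdot m$; in both parts the underlying vector space and left action are kept fixed, and only the right action is specified.

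For part (a), I would define the trivial right action $m\cdot x:=0$ for every $m\in M$ and every $x\in\lf$. Axiom (LLM) is nothing but the defining identity of a left Leibniz module, so it holds automatically. Both (LML) and (MLL) become assertions of the form $0=0$ after substituting $m\cdot y=0$ and $m\cdot(xy)=0$, so they are trivially satisfied. The resulting bimodule is anti-symmetric by construction.

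For part (b), I would define the symmetric right action $m\cdot x:=-x\cdot m$. Again (LLM) holds by hypothesis. For (LML), the left side is $(x\cdot m)\cdot y=-y\cdot(x\cdot m)$, and the right side unfolds to $x\cdot(-y\cdot m)-(-(xy)\cdot m)=-x\cdot(y\cdot m)+(xy)\cdot m$; these agree by (LLM) applied to $(xy)\cdot m=x\cdot(y\cdot m)-y\cdot(x\cdot m)$. For (MLL), the left side is $(m\cdot x)\cdot y=-y\cdot(-x\cdot m)=y\cdot(x\cdot m)$ after a double application of the definition, and the right side becomes $-(xy)\cdot m+x\cdot(y\cdot m)$, which again equals $y\cdot(x\cdot m)$ by the same instance of (LLM). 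So $M$ with this right action is a symmetric $\lf$-bimodule.

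For uniqueness in (b), suppose $(M,\lambda,\rho)$ is any symmetric $\lf$-bimodule whose left action coincides with the given one. The defining equation $m\cdot x=-x\cdot m$ of symmetry then forces $\rho_x=-\lambda_x$ for every $x\in\lf$, so the right action is determined by the left action. Hence the symmetric $\lf$-bimodule extending the given left module is unique.

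I do not expect any serious obstacle here; the argument is a direct verification. The only point requiring care is to keep track of the signs in (LML) and (MLL) when substituting $m\cdot y=-y\cdot m$ and then applying (LLM) to rewrite the iterated left action $(xy)\cdot m$. Once that bookkeeping is done, both statements are immediate.
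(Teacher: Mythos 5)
Your proof is correct and follows essentially the same route as the paper: part (a) by observing that every term of (LML) and (MLL) involves the trivial right action, and part (b) by forcing $m\cdot x:=-x\cdot m$ and verifying (LML) and (MLL) via the single instance $(xy)\cdot m=x\cdot(y\cdot m)-y\cdot(x\cdot m)$ of (LLM). The sign bookkeeping is carried out correctly, and the uniqueness observation matches the paper's remark that symmetry leaves no choice for the right action.
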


\begin{proof}
(a): Since (LLM) holds for every left $\lf$-module, and each term in (LML) as well as in (MLL)
contains at least one trivial right action, the assertion follows.

(b): In order for $M$ to be a symmetric $\lf$-bimodule, the right action of $\lf$ on $M$ has to
be defined by $m\cdot x:=-x\cdot m$ for every element $m\in M$ and every element $x\in\lf$.
Since (LLM) is automatically satisfied, one only needs to verify (LML) and (MLL). Let $x,y\in\lf$
and $m\in M$ be arbitrary elements. Then it follows from (LLM) that
\begin{eqnarray*}
(x\cdot m)\cdot y & = & -y\cdot(x\cdot m)\\
& = & (xy)\cdot m-x\cdot(y\cdot m)\\
& = & x\cdot(m\cdot y)-m\cdot(xy)
\end{eqnarray*}
and
\begin{eqnarray*}
(m\cdot x)\cdot y & = & y\cdot(x\cdot m)\\
& = & x\cdot(y\cdot m)-(xy)\cdot m\\
& = & m\cdot(xy)-x\cdot(m\cdot y)\,.
\end{eqnarray*}
This completes the proof.
\end{proof}
\vspace{.2cm}

Note that for the trivial left Leibniz module the Leibniz bimodule structures obtained from parts (a)
and (b) of Proposition \ref{leftmod} both give rise to the trivial Leibniz bimodule.
\vspace{.2cm}

\noindent {\bf Examples.} Let $\lf$ be a left Leibniz algebra and consider the left adjoint $\lf$-module
$\lf_{\ad,\ell}$.
\begin{enumerate}
\item[(1)] According to Proposition \ref{leftmod}\,(a), $\lf_{\ad,\ell}$ with a trivial right action is an
                anti-symmetric $\lf$-bimodule, which we call the {\em anti-symmetric adjoint bimodule\/}
                $\lf_a$ of $\lf$. The associated representation is $(L,0)$, the so-called {\em anti-symmetric
                adjoint representation\/} of $\lf$, where $L$ denotes the left multiplication operator of $\lf$.
\item[(2)] By virtue of Proposition \ref{leftmod}\,(b), $\lf_{\ad,\ell}$ has a unique symmetric
                $\lf$-bimodule structure, which we call the {\em symmetric adjoint bimodule\/} $\lf_s$ of
                $\lf$. The associated representation is $(L,-L)$, the so-called {\em symmetric adjoint
                representation\/} of $\lf$.
\end{enumerate}
\vspace{-.1cm}


\section{Leibniz cohomology}


Let $\lf$ be a left Leibniz algebra over a field $\F$, and let $M$ be an $\lf$-bimodule. For any
non-negative integer $n$ set $\CL^n(\lf,M):=\Hom_\F(\lf^{\otimes n},M)$ and consider the linear
transformation $d\hspace{.4mm}^n:\CL^n(\lf,M)\to\CL^{n+1}(\lf,M)$ defined by
\begin{eqnarray*}
(d\hspace{.4mm}^nf)(x_1\otimes\cdots\otimes x_{n+1}) & := & \sum_{i=1}^n(-1)^{i+1}x_i
\cdot f(x_1\otimes\cdots\otimes\hat{x}_i\otimes\cdots\otimes x_{n+1})\\
& + & (-1)^{n+1}f(x_1\otimes\cdots\otimes x_n)\cdot x_{n+1}\\
& + & \sum_{1\le i<j\le n+1}(-1)^if(x_1\otimes\cdots\otimes\hat{x}_i\otimes\cdots\otimes x_i
x_j\otimes\cdots\otimes x_{n+1})
\end{eqnarray*}
for any $f\in\CL^n(\lf,M)$ and all elements $x_1,\dots,x_{n+1}\in\lf$.

It is proved in \cite[Lemma 1.3.1]{Co} that $(\CL^n(\lf,M),d\hspace{.4mm}^n)_{n\in\N_0}$
is a cochain complex, i.e., $d\hspace{.4mm}^{n+1}\circ d\hspace{.4mm}^n=0$ for every
non-negative integer $n$. Of course, the original idea of defining Leibniz cohomology as
the cohomology of such a cochain complex for right Leibniz algebras is due to Loday
\cite[(1.8)]{LP1} (see also \cite{Bl2}). Hence one can define the {\em cohomology of $\lf$
with coefficients in\/} $M$ by $$\HL^n(\lf,M):=\HCE^n(\CL^\bullet(\lf,M),d\hspace{.4mm}^\bullet)
:=\Ker(d\hspace{.4mm}^n)/\im(d\hspace{.4mm}^{n-1})$$ for every non-negative integer $n$.
(Note that $d^{-1}:=0$.)

We will describe now the cohomology spaces in degree $0$ and $1$ (see \cite[Section~1.3.3]{Co}
for left Leibniz algebras and \cite[(1.8)]{LP1} for right Leibniz algebras). For this purpose we need
to introduce more notation.

The subspace $M^\lf:=\{m\in M\mid\forall\,x\in\lf:m\cdot x=0\}$ of an $\lf$-bimodule $M$ is
called the {\em space of right $\lf$-invariants\/} of $M$.

\begin{pro}\label{inv}
Let $\lf$ be a left Leibniz algebra, and let $M$ be an $\lf$-bimodule. Then $\HL^0(\lf,M)=M^\lf$.
\end{pro}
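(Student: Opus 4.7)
The plan is essentially to unwind the definitions, since $\HL^0(\lf,M)$ is computed purely from the degree-0 piece of the cochain complex and the convention $d^{-1}:=0$. So $\HL^0(\lf,M)=\Ker(d^{\hspace{.4mm}0})$, and everything reduces to computing $d^{\hspace{.4mm}0}$ explicitly.

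First I would identify $\CL^0(\lf,M)=\Hom_\F(\lf^{\otimes 0},M)=\Hom_\F(\F,M)$ with $M$ itself via $f\mapsto m:=f(1)$. Under this identification, a 0-cochain is just an element $m\in M$.

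Next I would specialize the formula for $d^{\hspace{.4mm}n}$ to $n=0$ and apply it to a single element $x_1\in\lf$. The first sum $\sum_{i=1}^{n}$ is empty, as is the double sum $\sum_{1\le i<j\le n+1}$ (since $n+1=1$), so only the middle ``right-action'' term survives:
\begin{equation*}
(d^{\hspace{.4mm}0}f)(x_1)=(-1)^{0+1}f()\cdot x_1=-m\cdot x_1.
\end{equation*}

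From this, $f\in\Ker(d^{\hspace{.4mm}0})$ if and only if $m\cdot x=0$ for every $x\in\lf$, which is exactly the condition $m\in M^\lf$. Combining this with $\HL^0(\lf,M)=\Ker(d^{\hspace{.4mm}0})$ yields the identification $\HL^0(\lf,M)\cong M^\lf$. There is really no obstacle here beyond keeping the sign and the empty-sum conventions straight; the only thing worth flagging is that the left-action term is absent in degree zero, so the invariance condition that survives is one-sided (right invariance), which is consistent with the asymmetric shape of the Leibniz coboundary.
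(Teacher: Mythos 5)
Your proof is correct and follows exactly the same route as the paper: identify $\CL^0(\lf,M)$ with $M$, compute $(d^{\hspace{.4mm}0}m)(x)=-m\cdot x$, and conclude that $\HL^0(\lf,M)=\Ker(d^{\hspace{.4mm}0})=M^\lf$. The only difference is that you spell out the empty-sum bookkeeping, which the paper leaves implicit.
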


\begin{proof}
We have that $(d\hspace{.4mm}^0m)(x)=-m\cdot x$ for any $m\in M$ and any $x\in\lf$. Hence
$\HL^0(\lf,M)=\Ker(d\hspace{.4mm}^0)=M^\lf$.
\end{proof}

In particular, we obtain from Lemma \ref{ann} and the definition of $M^\lf$ the following
result.

\begin{cor}\label{asinv}
Let $\lf$ be a left Leibniz algebra, and let $M$ be an $\lf$-bimodule. Then the following statements
hold:
\begin{enumerate}
\item[{\rm (a)}] If $M$ is symmetric, then $\HL^0(\lf,M)\cong M^{\lf_\lie}$.
\item[{\rm (b)}] If $M$ is anti-symmetric, then $\HL^0(\lf,M)=M$.
\end{enumerate}
\end{cor}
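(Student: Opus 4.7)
The plan is to reduce both statements directly to Proposition \ref{inv}, which identifies $\HL^0(\lf,M)$ with the space of right invariants $M^\lf = \{m \in M \mid m\cdot x = 0 \text{ for all } x \in \lf\}$, and then to apply the symmetric/anti-symmetric hypothesis together with Lemma \ref{ann}.

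For part (b), the argument is essentially immediate: if $M$ is anti-symmetric then by definition $m\cdot x = 0$ for every $x\in\lf$ and every $m\in M$, so $M^\lf = M$, and Proposition \ref{inv} yields $\HL^0(\lf,M) = M$.

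For part (a), I would first use symmetry to rewrite the defining condition for $M^\lf$: since $m\cdot x = -x\cdot m$, we have $M^\lf = \{m \in M \mid x\cdot m = 0 \text{ for all } x \in \lf\}$, i.e.\ the space of left $\lf$-invariants of $M$. Next, Lemma \ref{ann} tells us that $\leib(\lf) \subseteq \ann_\lf^\bi(M)$, so the left action of $\lf$ on $M$ factors through the canonical Lie algebra $\lf_\lie = \lf/\leib(\lf)$; this factored action is precisely the $\lf_\lie$-module structure referred to in the target expression $M^{\lf_\lie}$. An element $m\in M$ is annihilated by the left action of every element of $\lf$ if and only if it is annihilated by the left action of every element of $\lf_\lie$, so $M^\lf = M^{\lf_\lie}$, and combining with Proposition \ref{inv} gives the asserted isomorphism.

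I do not anticipate any real obstacle here; the only small point to get right is the translation between the right-invariant formulation arising from the $0$-coboundary $(d^0 m)(x) = -m\cdot x$ and the more standard left-invariant formulation used for modules over the Lie algebra $\lf_\lie$, and this translation is exactly what symmetry of $M$ supplies.
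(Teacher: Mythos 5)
Your proposal is correct and follows exactly the route the paper intends: the paper states that the corollary follows from Proposition \ref{inv}, Lemma \ref{ann}, and the definition of $M^\lf$, and your argument is precisely the natural expansion of that, including the key translation (via symmetry) from the right-invariant condition $m\cdot x=0$ to the left-invariant condition $x\cdot m=0$ and the factoring of the left action through $\lf_\lie$. No gaps.
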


The subspace $$\der(\lf,M):=\{f\in\Hom_\F(\lf,M)\mid\forall\,x,y\in\lf:f(xy)=f(x)\cdot y+x
\cdot f(y)\}$$ of $\Hom_\F(\lf,M)$ is called the {\em space of derivations from $\lf$ to $M$\/}.
The subspace $\ider(\lf,M):=\{g\in\Hom_\F(\lf,M)\mid\exists\,m\in M:g(x)=m\cdot x\}$ of
$\der(\lf,M)$ is called {\em the space of inner derivations from $\lf$ to $M$\/}.

\begin{pro}\label{outder}
Let $\lf$ be a left Leibniz algebra, and let $M$ be an $\lf$-bimodule. Then $\HL^1(\lf,M)=\der
(\lf,M)/\ider(\lf,M)$.
\end{pro}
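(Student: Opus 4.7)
The plan is to unwind the definition of $d\hspace{.4mm}^1$ and compare $\Ker(d\hspace{.4mm}^1)$ with $\der(\lf,M)$ and $\im(d\hspace{.4mm}^0)$ with $\ider(\lf,M)$. There is no conceptual obstacle: the entire proof is a direct computation from the coboundary formula, essentially identical in spirit to Proposition \ref{inv}.

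First, I would write out $d\hspace{.4mm}^1$ applied to an arbitrary $f\in\CL^1(\lf,M)=\Hom_\F(\lf,M)$. Taking $n=1$ in the defining formula, the first sum has one term (for $i=1$), the ``right'' term contributes with sign $(-1)^{1+1}=+1$, and the second sum has one term (for $i=1,j=2$) with sign $(-1)^1=-1$. This yields
\begin{equation*}
(d\hspace{.4mm}^1 f)(x\otimes y)=x\cdot f(y)+f(x)\cdot y-f(xy)
\end{equation*}
for all $x,y\in\lf$. Consequently, $d\hspace{.4mm}^1 f=0$ if, and only if, $f(xy)=x\cdot f(y)+f(x)\cdot y$ for all $x,y\in\lf$, which is precisely the condition that $f\in\der(\lf,M)$. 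Hence $\Ker(d\hspace{.4mm}^1)=\der(\lf,M)$.

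Next, I would invoke the computation already carried out in the proof of Proposition \ref{inv}, which gives $(d\hspace{.4mm}^0 m)(x)=-m\cdot x$ for every $m\in M$ and every $x\in\lf$. Thus $\im(d\hspace{.4mm}^0)$ consists of exactly those $g\in\Hom_\F(\lf,M)$ of the form $g(x)=-m\cdot x$ for some $m\in M$; replacing $m$ by $-m$ (which ranges over all of $M$) shows that $\im(d\hspace{.4mm}^0)=\ider(\lf,M)$. Combining this with the first step yields
\begin{equation*}
\HL^1(\lf,M)=\Ker(d\hspace{.4mm}^1)/\im(d\hspace{.4mm}^0)=\der(\lf,M)/\ider(\lf,M),
\end{equation*}
as asserted.
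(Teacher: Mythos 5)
Your proposal is correct and follows essentially the same route as the paper's proof: compute $(d\hspace{.4mm}^1 f)(x\otimes y)=x\cdot f(y)+f(x)\cdot y-f(xy)$ to identify $\Ker(d\hspace{.4mm}^1)=\der(\lf,M)$, and use $(d\hspace{.4mm}^0 m)(x)=-m\cdot x$ to identify $\im(d\hspace{.4mm}^0)=\ider(\lf,M)$. Your explicit remark about absorbing the sign by replacing $m$ with $-m$ is a small point the paper leaves implicit, but the argument is otherwise identical.
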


\begin{proof}
It follows from $(d\hspace{.4mm}^0m)(x)=-m\cdot x$ that $\ider(\lf,M)=\im(d\hspace{.4mm}^0)$.
Moreover, we have that $(d\hspace{.2mm}^1 f)(x\otimes y)=x\cdot f(y)+f(x)\cdot y-f(xy)$ for
any $m\in M$, $f\in\Hom_\F(\lf,M)$, and $x,y\in\lf$. Hence $\Ker(d\hspace{.2mm}^1)=\der(\lf,M)$,
and therefore $\HL^1(\lf,M)=\Ker(d\hspace{.2mm}^1)/\im(d\hspace{.4mm}^0)=\der(\lf,M)/\ider
(\lf,M)$.
\end{proof}

\begin{cor}\label{asoutder}
Let $\lf$ be a left Leibniz algebra over a field $\F$, and let $M$ be an $\lf$-bimodule. Then the following
statements hold:
\begin{enumerate}
\item[{\rm (a)}] If $M$ is symmetric, then $\HL^1(\lf,M)\cong\HL^1(\lf_\lie,M)=\HCE^1(\lf_\lie,M)$.
\item[{\rm (b)}] If $M$ is anti-symmetric, then $\HL^1(\lf,M)=\Hom_\lf(\lf,M)$, where the latter
                          denotes the vector space of homomorphisms of left $\lf$-modules.
\end{enumerate}
\end{cor}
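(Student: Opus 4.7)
The starting point is Proposition \ref{outder}, which identifies $\HL^1(\lf,M)$ with $\der(\lf,M)/\ider(\lf,M)$. My plan is to analyze derivations and inner derivations separately in the symmetric and anti-symmetric cases, exploiting the drastic simplification that each hypothesis forces on the relevant formulas.

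For (a), assume $M$ is symmetric, so $m\cdot x=-x\cdot m$. The key observation is that any Leibniz derivation $f\colon\lf\to M$ vanishes on the Leibniz kernel: for every $x\in\lf$, the derivation identity gives
$$f(x^2)=f(x)\cdot x+x\cdot f(x)=-x\cdot f(x)+x\cdot f(x)=0,$$
and since $\leib(\lf)$ is spanned by such squares, $f$ factors uniquely through the canonical map $\lf\to\lf_\lie$. By Lemma \ref{ann}, $\leib(\lf)$ acts trivially on $M$, so $M$ is a well-defined symmetric $\lf_\lie$-bimodule. The derivation condition on $\lf_\lie$ then becomes $f([x,y])=-y\cdot f(x)+x\cdot f(y)$, which is precisely the Chevalley--Eilenberg $1$-cocycle identity for the Lie algebra $\lf_\lie$ with coefficients in the Lie module $M$. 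A similar direct check identifies the Leibniz inner derivation $x\mapsto -m\cdot x=x\cdot m$ with the standard Chevalley--Eilenberg $1$-coboundary $x\mapsto x\cdot m$. Taking quotients yields both the isomorphism $\HL^1(\lf,M)\cong\HL^1(\lf_\lie,M)$ and its equality with $\HCE^1(\lf_\lie,M)$.

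For (b), assume $M$ is anti-symmetric, so $m\cdot x=0$ for all $x\in\lf$ and $m\in M$. Then every inner derivation is identically zero, so $\ider(\lf,M)=0$ and hence $\HL^1(\lf,M)=\der(\lf,M)$. A Leibniz derivation $f\colon\lf\to M$ now satisfies $f(xy)=f(x)\cdot y+x\cdot f(y)=x\cdot f(y)$, which is exactly the defining condition for a homomorphism of left $\lf$-modules from the left adjoint module $\lf_{\ad,\ell}$ to $M$. Conversely, every such homomorphism automatically fulfils the derivation rule, giving $\der(\lf,M)=\Hom_\lf(\lf,M)$.

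The only slightly subtle step is the comparison with Chevalley--Eilenberg cohomology in part (a); this amounts to checking that, once $M$ is symmetric, the formulas for $1$-cocycles and $1$-coboundaries in the two complexes literally coincide (the symmetry converts $f(x)\cdot y$ into $-y\cdot f(x)$, matching the antisymmetrization built into the Chevalley--Eilenberg differential). All remaining steps are routine substitutions into the definitions of derivations, inner derivations, and the symmetric/anti-symmetric bimodule actions.
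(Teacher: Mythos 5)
Your proposal is correct and follows essentially the same route as the paper: in (a) the same computation $f(x^2)=f(x)\cdot x+x\cdot f(x)=0$ shows derivations kill $\leib(\lf)$ and hence descend to $\lf_\lie$, after which the symmetric action converts the Leibniz derivation and inner-derivation conditions into the Chevalley--Eilenberg $1$-cocycle and $1$-coboundary formulas; in (b) the vanishing right action forces $\ider(\lf,M)=0$ and reduces the derivation identity to the left-module homomorphism condition. No gaps.
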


\begin{proof}
(a): Let $f\in\der(\lf,M)$ and let $x\in\lf$ be arbitrary. Since $M$ is symmetric, we have that $f(x^2)
=f(x)\cdot x+x\cdot f(x)=-x\cdot f(x)+x\cdot f(x)=0$. Since $f$ is linear, it follows that $f(\leib(\lf))
=0$. Hence $f$ induces a derivation from $\lf_\lie$ to $M$. This yields the isomorphism $\HL^1(\lf,M)
\cong\HL^1(\lf_\lie,M)$.

Let $d\hspace{.4mm}^n_\lie$ denote the coboundary map for the Chevalley-Eilenberg cohomology
of Lie algebras (see \cite[Exercise I.3.12]{Bou}). Then we have that $$(d\hspace{.4mm}^1_\lie f)
(x\wedge y)=x\cdot f(y)-y\cdot f(x)-f(xy)$$ for any $m\in M$, any $f\in\Hom_\F(\lf_\lie,M)$, and
any $x,y\in\lf_\lie$. Since $M$ is symmetric, we obtain that $\Ker(d\hspace{.4mm}^1_\lie)=\der(
\lf_\lie,M)$. Similarly, $(d\hspace{.4mm}^0_\lie m)(x)=x\cdot m$ for any $m\in M$ and any $x\in
\lf_\lie$. Hence $\im(d\hspace{.4mm}^0_\lie)=\ider(\lf_\lie,M)$, and thus we have that $\HL^1
(\lf_\lie,M)=\HCE^1(\lf_\lie,M)$.

(b): Since $M$ is anti-symmetric, we have that $$\der(\lf,M)=\{f\in\Hom_\F(\lf,M)\mid\forall\,x,y
\in\lf:f(xy)=x\cdot f(y)\}$$ as well as $\ider(\lf,M)=\{0\}$, and therefore it follows from Proposition
\ref{outder} that $\HL^1(\lf,M)=\Hom_\lf(\lf,M)$.
\end{proof}

In particular, we obtain from Corollary \ref{asoutder} the following result.

\begin{cor}\label{triv}
If $\lf$ is a left Leibniz algebra over a field $\F$, then $$\HL^1(\lf,\F)\cong\lf/\lf^2\cong\lf_\lie/
\lf_\lie^2\,.$$
\end{cor}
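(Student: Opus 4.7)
The plan is to specialize Corollary \ref{asoutder} to the trivial bimodule $M=\F$. Because the right action of $\lf$ on $\F$ is zero, $\F$ is anti-symmetric, so part (b) gives
$$\HL^1(\lf,\F)=\Hom_\lf(\lf,\F),$$
the space of homomorphisms from the left adjoint module $\lf_{\ad,\ell}$ to the trivial left $\lf$-module $\F$.

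Next I would unpack this $\Hom$ space. A linear map $f\colon\lf\to\F$ is a left $\lf$-module homomorphism if and only if $f(xy)=x\cdot f(y)=0$ for all $x,y\in\lf$, i.e., if and only if $f$ vanishes on $\lf^2$. Hence $\Hom_\lf(\lf,\F)$ is the annihilator of $\lf^2$ in the dual $\lf^*$, which is naturally $(\lf/\lf^2)^*$; identifying a (finite-dimensional) vector space with its dual then yields the first isomorphism $\HL^1(\lf,\F)\cong\lf/\lf^2$.

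For the second isomorphism $\lf/\lf^2\cong\lf_\lie/\lf_\lie^2$, I would observe that every generator $x^2=x\cdot x$ of $\leib(\lf)$ lies in $\lf^2$, so $\leib(\lf)\subseteq\lf^2$. Since $\lf_\lie=\lf/\leib(\lf)$, the subspace $\lf_\lie^2$ is the image of $\lf^2$ under the canonical projection, namely $\lf^2/\leib(\lf)$. The third isomorphism theorem then gives
$$\lf_\lie/\lf_\lie^2=(\lf/\leib(\lf))\big/(\lf^2/\leib(\lf))\cong\lf/\lf^2.$$

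The main conceptual point — really the only obstacle — is recognizing that the natural output of the second step is the dual $(\lf/\lf^2)^*$, so the stated isomorphism $\HL^1(\lf,\F)\cong\lf/\lf^2$ is a (non-canonical) vector space isomorphism that implicitly relies on finite-dimensionality of the abelianization. Everything else is routine bookkeeping with the third isomorphism theorem and the inclusion $\leib(\lf)\subseteq\lf^2$.
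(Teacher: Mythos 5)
Your proposal is correct and follows essentially the same route as the paper, which gives no separate argument but simply derives the corollary from Corollary \ref{asoutder} applied to the trivial bimodule (your use of part (b) rather than part (a) is an immaterial choice, since $\F$ is both symmetric and anti-symmetric). Your observation that the natural output is the dual space $(\lf/\lf^2)^*$, so that the stated isomorphism with $\lf/\lf^2$ is non-canonical and implicitly requires $\lf/\lf^2$ to be finite-dimensional, is a legitimate caveat about the statement itself that the paper passes over in silence.
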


The next result is reminiscent of Seligman's non-vanishing theorem for modular Lie algebras (see
\cite[p. 102]{Se}).

\begin{cor}\label{asadj}
If $\lf\ne 0$ is a left Leibniz algebra, then $\HL^1(\lf,\lf_a)\ne 0$.
\end{cor}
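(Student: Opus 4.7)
My plan is to deduce the result almost immediately from Corollary \ref{asoutder}\,(b), applied to the anti-symmetric adjoint bimodule $\lf_a$.

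Since $\lf_a$ is anti-symmetric, part (b) of Corollary \ref{asoutder} gives
$$\HL^1(\lf,\lf_a)=\Hom_\lf(\lf,\lf_a).$$
Here the right-hand side is the space of left $\lf$-module homomorphisms from the left adjoint module $\lf_{\ad,\ell}$ (the underlying left $\lf$-module of the domain) to $\lf_a$, whose associated left representation is precisely $L$, i.e., whose underlying left $\lf$-module is again $\lf_{\ad,\ell}$. Thus the target identification becomes
$$\HL^1(\lf,\lf_a)=\End_\lf(\lf_{\ad,\ell}).$$

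The plan is then to exhibit a single non-zero element on the right. The obvious choice is the identity map $\id_\lf\colon\lf\to\lf$, which trivially commutes with the left $\lf$-action on $\lf_{\ad,\ell}$. Equivalently (unpacking Proposition \ref{outder}), $\id_\lf\in\der(\lf,\lf_a)$ because the derivation condition reduces, using that the right action on $\lf_a$ is zero, to $\id_\lf(xy)=x\cdot\id_\lf(y)$, which says $xy=xy$; and $\ider(\lf,\lf_a)=0$ because every inner derivation into an anti-symmetric bimodule is of the form $x\mapsto m\cdot x=0$. Hence $\id_\lf$ represents a non-zero cohomology class as soon as $\lf\neq 0$.

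There is no real obstacle here beyond checking that $\id_\lf$ indeed lies in $\Hom_\lf(\lf,\lf_a)$ and is non-trivial modulo inner derivations; both points are essentially tautological once Corollary \ref{asoutder}\,(b) is in hand. In effect, the corollary already does all of the work, and the present statement is just the observation that an anti-symmetric adjoint module always supplies the identity as a non-inner derivation.
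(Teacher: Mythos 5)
Your proposal is correct and is essentially the paper's own proof: it applies Corollary \ref{asoutder}\,(b) to identify $\HL^1(\lf,\lf_a)$ with $\Hom_\lf(\lf,\lf_a)$ and then exhibits $\id_\lf$ as a non-zero element. The extra unpacking of why $\id_\lf$ is a non-inner derivation is a harmless elaboration of the same argument.
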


\begin{proof}
It follows from Corollary \ref{asoutder}\,(b) that $0\ne\id_\lf\in\Hom_\lf(\lf,\lf_a)\cong\HL^1
(\lf,\lf_a)$.
\end{proof}

Finally, we briefly discuss abelian extensions of left Leibniz algebras.
Let $\lf$ and $\kf$ be left Leibniz algebras. Then any short exact sequence $0\to\kf\to\ef\to
\lf\to 0$ is called an {\em extension of $\lf$ by $\kf$\/}, and $\kf$ is called the {\em kernel\/}
of the extension. Two extensions $0\to\kf\stackrel{\iota}\to\ef\stackrel{\pi}\to\lf\to 0$ and
$0\to\kf\stackrel{\iota^\prime}\to\ef^\prime\stackrel{\pi^\prime}\to\lf\to 0$ of $\lf$ by $\kf$
are called {\em equivalent\/} if there is a homomorphism $\phi:\ef\to\ef^\prime$ such that
$\phi\circ\iota=\iota^\prime$ and $\pi^\prime\circ\phi=\pi$. Every extension $0\to\kf\to\ef
\to\lf\to 0$ with abelian kernel $\kf$ gives rise to an $\lf$-bimodule structure on $\kf$ (see
\cite[Proposition 1.3.6]{Co}). It can be shown that the set of equivalence classes of all
extensions of $\lf$ by a given $\lf$-bimodule $M$ (considered as an abelian Leibniz algebra)
are in natural bijection to $\HL^2(\lf,M)$ (see \cite[Theorem 1.3.13]{Co} for left Leibniz
algebras and \cite[Proposition 1.9]{LP1} for right Leibniz algebras). We will only prove the
following ingredient of establishing such a bijection.

\begin{lem}\label{ext}
Let $\lf$ be a left Leibniz algebra, let $M$ be an $\lf$-bimodule, and let $f\in\CL^2(\lf,M)$ be
a Leibniz $2$-cocycle. Then $\ef:=\lf\times M$ with Leibniz product defined by $$(x_1,m_1)
(x_2,m_2):=(x_1x_2,m_1\cdot x_2+x_1\cdot m_2+f(x_1\otimes x_2))$$ for any $x_1,x_2\in
\lf$ and any $m_1,m_2\in M$ is a left Leibniz algebra.
\end{lem}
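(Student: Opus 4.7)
The plan is to verify the left Leibniz identity
\[
(x_1,m_1)\bigl[(x_2,m_2)(x_3,m_3)\bigr] = \bigl[(x_1,m_1)(x_2,m_2)\bigr](x_3,m_3) + (x_2,m_2)\bigl[(x_1,m_1)(x_3,m_3)\bigr]
\]
for $\ef$ by direct computation on each component, expanding everything out from the definition of the multiplication.

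First I would compute the three double products, paying attention to the three types of terms appearing in the second (bimodule) slot: purely bimodule-action terms of the form $m_i\cdot(x_jx_k)$, $x_i\cdot(m_j\cdot x_k)$, etc., and cocycle terms of the form $f(x_i\otimes x_j)$, $x_i\cdot f(x_j\otimes x_k)$, $f(x_i\otimes x_j)\cdot x_k$, and $f(x_i\otimes x_jx_k)$. In the first component ($\lf$-component) the desired identity reduces to the left Leibniz identity in $\lf$, which holds by assumption.

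Next, on the second component ($M$-component), I would split the verification into two independent sub-identities. The terms involving only the bimodule actions (no $f$) sort themselves into three groups according to how many of $m_1,m_2,m_3$ appear; each group amounts to one of the bimodule axioms (LLM), (LML), (MLL), already established for the $\lf$-bimodule $M$. The remaining terms, all involving $f$, must cancel on their own; collecting them, the desired equality is precisely
\[
f(x_1\otimes x_2x_3) + x_1\cdot f(x_2\otimes x_3) = f(x_1x_2\otimes x_3) + f(x_1\otimes x_2)\cdot x_3 + x_2\cdot f(x_1\otimes x_3) + f(x_2\otimes x_1x_3),
\]
which is a rewriting of the $2$-cocycle condition $(d\hspace{.4mm}^2f)(x_1\otimes x_2\otimes x_3)=0$ obtained from the explicit coboundary formula given before Proposition \ref{inv}.

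The plan therefore has no hard conceptual step; the only real work is bookkeeping. The main obstacle is keeping the signs and the order of arguments straight when matching the collected $f$-terms against the formula for $d\hspace{.4mm}^2f$, since the coboundary involves one contribution of the form $f(\cdots)\cdot x_{n+1}$ with a sign depending on $n$ and three contributions of the form $f(\cdots\otimes x_ix_j\otimes\cdots)$ with alternating signs. Once those terms are correctly identified, the identity follows by substituting $d\hspace{.4mm}^2f=0$ and the bilinearity of the module structures.
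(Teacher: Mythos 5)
Your proposal is correct and follows essentially the same route as the paper: the paper likewise expands the three double products, matches the pure bimodule-action terms against the identities (LLM), (LML), (MLL) (and the left Leibniz identity in the first component), and observes that the leftover $f$-terms cancel precisely because $(d\hspace{.4mm}^2f)(x_1\otimes x_2\otimes x_3)=0$ rearranges to $x_1\cdot f(x_2\otimes x_3)+f(x_1\otimes x_2x_3)=x_2\cdot f(x_1\otimes x_3)+f(x_1\otimes x_2)\cdot x_3+f(x_1x_2\otimes x_3)+f(x_2\otimes x_1x_3)$. Your identification of the collected cocycle terms and the grouping of the bimodule terms by which $m_i$ they contain both match the paper's computation exactly.
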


\begin{proof}
Since the Cartesian product $\lf\times M$ with componentwise addition and componentwise scalar
multiplication is a vector space, we only need to verify the left Leibniz identity. We have that
\begin{eqnarray*}
(d\hspace{.2mm}^2 f)(x_1\otimes x_2\otimes x_3) & = & x_1\cdot f(x_2\otimes x_3)-x_2\cdot
f(x_1\otimes x_3)-f(x_1\otimes x_2)\cdot x_3\\
& & -f(x_1x_2\otimes x_3)+f(x_1\otimes x_2x_3)-f(x_2\otimes x_1x_3)
\end{eqnarray*}
for any $x_1,x_2,x_3\in\lf$.
Since $f$ is a $2$-cocycle, we have that $$x_1\cdot f(x_2\otimes x_3)+f(x_1\otimes x_2x_3)=
x_2\cdot f(x_1\otimes x_3)+f(x_1\otimes x_2)\cdot x_3+f(x_1x_2\otimes x_3)+f(x_2\otimes x_1x_3)\,.$$

We compute
\begin{eqnarray*}
(x_1,m_1)[(x_2,m_2)(x_3,m_3)] & = & (x_1,m_1)(x_2x_3,m_2\cdot x_3+x_2\cdot m_3+f(x_2\otimes x_3))\\
& = & (x_1(x_2x_3),m_1\cdot(x_2x_3)+x_1\cdot(m_2\cdot x_3)+x_1\cdot(x_2\cdot m_3)\\
& & +\,x_1\cdot f(x_2\otimes x_3)+f(x_1\otimes x_2x_3))\,,
\end{eqnarray*}
\begin{eqnarray*}
[(x_1,m_1)(x_2,m_2)](x_3,m_3) & = & (x_1x_2,m_1\cdot x_2+x_1\cdot m_2+f(x_1\otimes x_2))(x_3,m_3)\\
& = & ((x_1x_2)x_3,(m_1\cdot x_2)\cdot x_3+(x_1\cdot m_2)\cdot x_3\\
& & +\,f(x_1\otimes x_2)\cdot x_3+(x_1x_2)\cdot m_3+f(x_1x_2\otimes x_3))\,,
\end{eqnarray*}
\begin{eqnarray*}
(x_2,m_2)[(x_1,m_1)(x_3,m_3)] & = & (x_2,m_2)(x_1x_3,m_1\cdot x_3+x_1\cdot m_3+f(x_1\otimes x_3))\\
& = & (x_2(x_1x_3),m_2\cdot(x_1x_3)+x_2\cdot(m_1\cdot x_3)+x_2\cdot(x_1\cdot m_3)\\
& & +\,x_2\cdot f(x_1\otimes x_3)+f(x_2\otimes x_1x_3))
\end{eqnarray*}
for any elements $x_1,x_2,x_3\in\lf$ and $m_1,m_2,m_3\in M$.

It follows from the left Leibniz identity, (\ref{LLM}), (\ref{LML}), (\ref{MLL}), and the $2$-cocycle
identity for $f$ that the right--hand side of the first identity equals the sum of the right-hand sides
of the second and third identities. Hence the left-hand side of the first identity also equals the
sum of the left-hand sides of the second and third identities, and thus $\lf\times M$ satisfies the
left Leibniz identity.
\end{proof}


\section{Nilpotent Leibniz algebras}


Let $\lf$ be a left or right Leibniz algebra. Then the {\it left descending central series\/} $$^1\hspace{-.5mm}
\lf\supseteq\hspace{.1mm}^2\hspace{-.5mm}\lf\supseteq\hspace{.1mm}^3\hspace{-.5mm}\lf
\supseteq\cdots$$ of $\lf$ is defined recursively by $^1\hspace{-.5mm}\lf:=\lf$ and $^{n+1}
\hspace{-.5mm}\lf:=\lf\hspace{.5mm}^n\hspace{-.5mm}\lf$ for every positive integer $n$.
Similarly, the {\it right descending central series\/} $$\lf^1\supseteq\lf^2\supseteq\lf^3\supseteq
\cdots$$ of $\lf$ is defined recursively by $\lf^1:=\lf$ and $\lf^{n+1}:=\lf^n\lf$ for every
positive integer $n$.\footnote{This definition is consistent with the definition of $\lf^2$ given
in Section 1.} Note that Proposition \ref{rightlowcenser} is just \cite[Lemma 1]{AO1}. For the
convenience of the reader we include its proof.

\begin{pro}\label{leftlowcenser}
Let $\lf$ be a left Leibniz algebra. Then $(^n\hspace{-.5mm}\lf)_{n\in\N}$ is a descending filtration
of $\lf$, i.e., $^n\hspace{-.5mm}\lf\supseteq\hspace{.1mm}^{n+1}\hspace{-.5mm}\lf$ and
$^m\hspace{-.5mm}\lf\hspace{.5mm}^n\hspace{-.5mm}\lf\subseteq\hspace{.1mm}^{m+n}
\hspace{-.5mm}\lf$ for all positive integers $m$ and $n$. In particular, $^n\hspace{-.5mm}\lf$
is an ideal of $\lf$ for every positive integer $n$.
\end{pro}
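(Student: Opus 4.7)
The plan is to prove the two filtration properties by induction, then derive the ideal property as an immediate corollary. The only tool I will need beyond the definition $\phantom{|}^{n+1}\hspace{-.5mm}\lf=\lf\hspace{.5mm}^n\hspace{-.5mm}\lf$ is identity (\ref{RLLI}), which lets me rewrite an associator $(xy)z$ in terms of $x(yz)$ and $y(xz)$. This is exactly the key mechanism that Proposition \ref{derser} used for the derived series, and the spirit of the argument here is the same.

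First I would handle the descending property $\phantom{|}^n\hspace{-.5mm}\lf\supseteq\phantom{|}^{n+1}\hspace{-.5mm}\lf$ by induction on $n$. The base $n=1$ is obvious since $\phantom{|}^2\hspace{-.5mm}\lf=\lf\lf\subseteq\lf$. For the induction step one simply multiplies on the left by $\lf$: $\phantom{|}^{n+2}\hspace{-.5mm}\lf=\lf\hspace{.5mm}^{n+1}\hspace{-.5mm}\lf\subseteq\lf\hspace{.5mm}^n\hspace{-.5mm}\lf=\phantom{|}^{n+1}\hspace{-.5mm}\lf$ by the inductive hypothesis.

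The substantive step is the filtration inclusion $\phantom{|}^m\hspace{-.5mm}\lf\cdot\phantom{|}^n\hspace{-.5mm}\lf\subseteq\phantom{|}^{m+n}\hspace{-.5mm}\lf$, which I would prove by induction on $m$ with $n$ arbitrary. The base $m=1$ is just the definition of $\phantom{|}^{n+1}\hspace{-.5mm}\lf$. For the step, fix $m$ and assume the statement holds for $m$ and all $n$. A typical generator of $\phantom{|}^{m+1}\hspace{-.5mm}\lf\cdot\phantom{|}^n\hspace{-.5mm}\lf$ has the form $(ab)c$ with $a\in\lf$, $b\in\phantom{|}^m\hspace{-.5mm}\lf$, $c\in\phantom{|}^n\hspace{-.5mm}\lf$. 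Applying (\ref{RLLI}) gives $(ab)c=a(bc)-b(ac)$. By the inductive hypothesis $bc\in\phantom{|}^{m+n}\hspace{-.5mm}\lf$, so $a(bc)\in\phantom{|}^{m+n+1}\hspace{-.5mm}\lf$; similarly $ac\in\phantom{|}^{n+1}\hspace{-.5mm}\lf$, and a second use of the inductive hypothesis (same $m$, shifted $n$) shows $b(ac)\in\phantom{|}^m\hspace{-.5mm}\lf\cdot\phantom{|}^{n+1}\hspace{-.5mm}\lf\subseteq\phantom{|}^{m+n+1}\hspace{-.5mm}\lf$. Since $\phantom{|}^{m+n+1}\hspace{-.5mm}\lf$ is a subspace, the conclusion follows.

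The ideal property is then immediate: $\lf\cdot\phantom{|}^n\hspace{-.5mm}\lf=\phantom{|}^{n+1}\hspace{-.5mm}\lf\subseteq\phantom{|}^n\hspace{-.5mm}\lf$ by the descending property, and $\phantom{|}^n\hspace{-.5mm}\lf\cdot\lf=\phantom{|}^n\hspace{-.5mm}\lf\cdot\phantom{|}^1\hspace{-.5mm}\lf\subseteq\phantom{|}^{n+1}\hspace{-.5mm}\lf\subseteq\phantom{|}^n\hspace{-.5mm}\lf$ by the filtration inclusion and the descending property. The main obstacle I anticipate is organizing the double induction cleanly so that in the step at level $m+1$ one uses the hypothesis at level $m$ both with the given $n$ and with $n+1$; since both uses are at the same $m$, there is no circularity, but it is the place where care is needed.
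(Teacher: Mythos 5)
Your proof is correct and is essentially the paper's argument: the paper proves the right-Leibniz statement (Proposition \ref{rightlowcenser}) by the mirror-image induction using identity (\ref{LRLI}) and then deduces the left version by passing to the opposite algebra, whereas you run the same induction directly on the left version using (\ref{RLLI}). Your organization of the double induction is sound --- the two uses of the hypothesis at level $m$ (with $n$ and with $n+1$) involve no circularity --- so there is nothing to fix.
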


\begin{pro}\label{rightlowcenser}
Let $\lf$ be a right Leibniz algebra. Then $(\lf^n)_{n\in\N}$ is a descending filtration of $\lf$,
i.e., $\lf^n\supseteq\lf^{n+1}$ and $\lf^m\lf^n\subseteq\lf^{m+n}$ for all positive integers
$m$ and $n$. In particular, $\lf^n$ is an ideal of $\lf$ for every positive integer $n$.
\end{pro}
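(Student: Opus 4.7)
The plan is to prove the three assertions in order: first the descending containment $\lf^n \supseteq \lf^{n+1}$, then the ``bilinear'' filtration identity $\lf^m \lf^n \subseteq \lf^{m+n}$, and finally deduce the ideal property.

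The descending containment comes from a simple induction on $n$. The base case $\lf^2 = \lf\lf \subseteq \lf = \lf^1$ is automatic, and the inductive step uses monotonicity of the product on subsets: if $\lf^n \subseteq \lf^{n-1}$, then $\lf^{n+1} = \lf^n\lf \subseteq \lf^{n-1}\lf = \lf^n$.

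For the main estimate $\lf^m \lf^n \subseteq \lf^{m+n}$, I would induct on $n$ with $m$ arbitrary. The base case $n = 1$ is just the recursive definition of $\lf^{m+1}$. For the inductive step, take $x \in \lf^m$, $y \in \lf^n$, $z \in \lf$, so that products $yz$ span $\lf^{n+1} = \lf^n \lf$, and expand via identity (\ref{LRLI}):
$$x(yz) = (xy)z - (xz)y.$$
The induction hypothesis applied at $m$ gives $xy \in \lf^m\lf^n \subseteq \lf^{m+n}$, whence $(xy)z \in \lf^{m+n}\lf = \lf^{m+n+1}$. For the other term, $xz \in \lf^m\lf = \lf^{m+1}$ by definition, and the induction hypothesis applied now at $m+1$ gives $(xz)y \in \lf^{m+1}\lf^n \subseteq \lf^{m+n+1}$. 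By bilinearity, $\lf^m \lf^{n+1} \subseteq \lf^{m+n+1}$.

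The ideal property is then immediate: $\lf^n\lf = \lf^{n+1} \subseteq \lf^n$ gives the right-ideal property (using descending), and $\lf\hspace{.5mm}\lf^n = \lf^1\lf^n \subseteq \lf^{n+1} \subseteq \lf^n$ gives the left-ideal property (using the main estimate with $m=1$ together with descending). The only delicate point I anticipate is in the middle step: the induction must carry $m$ as a parameter, since the computation invokes the hypothesis at both $m$ and $m+1$. Choosing identity (\ref{LRLI}) rather than (\ref{RLI}) to expand $x(yz)$ is what makes the two resulting terms fall into the filtration cleanly, so I would single this out as the essential ingredient.
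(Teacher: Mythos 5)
Your proof is correct and follows essentially the same route as the paper: induction on $n$ with $m$ carried as a universally quantified parameter, expanding $x(yz)=(xy)z-(xz)y$ via identity (\ref{LRLI}) and invoking the hypothesis at both $m$ and $m+1$, exactly as in the paper's displayed computation. The point you single out as delicate is indeed the one the paper flags by noting the induction hypothesis is applied twice.
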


\begin{proof}
We only prove Proposition \ref{rightlowcenser} as this yields Proposition \ref{leftlowcenser}
by considering the opposite algebra. Firstly, we show that the right descending central series
is indeed descending, i.e., $\lf^{n+1}\subseteq\lf^n$ for every positive integer $n$. We
proceed by induction on $n$. The base step $n=1$ is clear. For the induction step let $n>1$
be an integer and assume that the statement is true for $n-1$. Then $$\lf^{n+1}=\lf^n\lf
\subseteq\lf^{n-1}\lf=\lf^n\,.$$

Next, we prove $\lf^m\lf^n\subseteq\lf^{m+n}$ by induction on $n$. The base step $n=1$
is an immediate consequence of the definition: $$\lf^n\lf^1=\lf^n\lf=\lf^{n+1}\,.$$ For the
induction step let $n>1$ be an integer and assume that the statement is true for $n-1$. It
follows from identity (\ref{LRLI}) and by applying the induction hypothesis twice that
\begin{eqnarray*}
\lf^m\lf^n & = & \lf^m(\lf^{n-1}\lf)\subseteq(\lf^m\lf^{n-1})\lf+(\lf^m\lf)\lf^{n-1}\\
& \subseteq & \lf^{m+n-1}\lf+\lf^{m+1}\lf^{n-1}\subseteq\lf^{m+n}\,.
\end{eqnarray*}
This completes the proof.
\end{proof}

Following \cite[p.\ 2]{AO1} and \cite[p. 3829]{P1} we define the {\it descending central series\/}
$$\lf_1\supseteq\lf_2\supseteq\lf_3\supseteq\cdots$$ of a left or right Leibniz algebra $\lf$
recursively by $\lf_1:=\lf$ and $\lf_n:=\sum\limits_{k=1}^{n-1}\lf_k\lf_{n-k}$ for every
positive integer $n$. Note that Proposition \ref{lowcenserright} is just \cite[Lemma 2]{AO1}.
For the convenience of the reader we include its proof.

\begin{pro}\label{lowcenserleft}
Let $\lf$ be a left Leibniz algebra. Then $\lf_n=\hspace{.1mm}^n\hspace{-.5mm}\lf$ for
every positive integer $n$. In particular, $(\lf_n)_{n\in\N}$ is a descending filtration of $\lf$,
i.e., $\lf_n\supseteq\lf_{n+1}$ and $\lf_m\lf_n\subseteq\lf_{m+n}$ for all positive integers
$m$ and $n$.
\end{pro}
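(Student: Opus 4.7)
The plan is to prove the equality $\lf_n = {}^n\hspace{-.5mm}\lf$ by induction on $n$, and then derive the filtration properties as an immediate corollary of Proposition \ref{leftlowcenser}. The base case $n=1$ is immediate from the definitions, since $\lf_1 = \lf = {}^1\hspace{-.5mm}\lf$.

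For the induction step, assume $\lf_k = {}^k\hspace{-.5mm}\lf$ for every positive integer $k < n$. First I would check the easy inclusion ${}^n\hspace{-.5mm}\lf \subseteq \lf_n$: by definition ${}^n\hspace{-.5mm}\lf = \lf \cdot {}^{n-1}\hspace{-.5mm}\lf$, which by the induction hypothesis equals $\lf_1 \lf_{n-1}$, and this is the $k=1$ summand in the definition of $\lf_n$. The reverse inclusion $\lf_n \subseteq {}^n\hspace{-.5mm}\lf$ is the one that requires the key input: for each $1 \le k \le n-1$, apply the induction hypothesis twice to rewrite $\lf_k \lf_{n-k} = {}^k\hspace{-.5mm}\lf \cdot {}^{n-k}\hspace{-.5mm}\lf$, and then invoke the filtration property ${}^m\hspace{-.5mm}\lf \cdot {}^n\hspace{-.5mm}\lf \subseteq {}^{m+n}\hspace{-.5mm}\lf$ from Proposition \ref{leftlowcenser} to conclude that each summand lies in ${}^n\hspace{-.5mm}\lf$. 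Summing over $k$ gives $\lf_n \subseteq {}^n\hspace{-.5mm}\lf$.

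Once the equality $\lf_n = {}^n\hspace{-.5mm}\lf$ is established for all $n$, the ``In particular'' statement is just a restatement of Proposition \ref{leftlowcenser}: the descending property $\lf_n \supseteq \lf_{n+1}$ and the compatibility $\lf_m \lf_n \subseteq \lf_{m+n}$ transfer verbatim from $({}^n\hspace{-.5mm}\lf)_{n\in\N}$ to $(\lf_n)_{n\in\N}$.

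The argument is essentially bookkeeping; the real content has already been packaged into Proposition \ref{leftlowcenser}. The only mild subtlety is to remember that the induction hypothesis is needed for both factors $\lf_k$ and $\lf_{n-k}$ in the sum, which is fine because both indices are strictly smaller than $n$ (as $1 \le k \le n-1$). No step should present a genuine obstacle.
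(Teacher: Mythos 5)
Your proof is correct and follows essentially the same strategy as the paper's: strong induction on $n$, with one inclusion coming from identifying ${}^n\hspace{-.5mm}\lf=\lf\,{}^{n-1}\hspace{-.5mm}\lf$ with a single summand of $\lf_n$ and the other from the filtration property of Proposition \ref{leftlowcenser}. The only cosmetic difference is that the paper carries out this induction for the right-hand version (Proposition \ref{lowcenserright}) and obtains the left-hand statement by the opposite-algebra convention, whereas you argue the left-hand case directly.
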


\begin{pro}\label{lowcenserright}
Let $\lf$ be a right Leibniz algebra. Then $\lf_n=\lf^n$ for every positive integer $n$. In
particular, $(\lf_n)_{n\in\N}$ is a descending filtration of $\lf$, i.e., $\lf_n\supseteq\lf_{n+1}$
and $\lf_m\lf_n\subseteq\lf_{m+n}$ for all positive integers $m$ and $n$.
\end{pro}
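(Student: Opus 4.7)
My plan is to prove Proposition \ref{lowcenserright} by induction on $n$, leveraging the filtration property $\lf^m\lf^n\subseteq\lf^{m+n}$ already established in Proposition \ref{rightlowcenser}. Once the identity $\lf_n=\lf^n$ is established, the ``in particular'' assertion follows immediately, since Proposition \ref{rightlowcenser} gives exactly those descending filtration properties for $(\lf^n)_{n\in\N}$.

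For the base case, $\lf_1=\lf=\lf^1$ by definition. For the inductive step, I assume that $\lf_k=\lf^k$ for all $1\le k\le n-1$ and show both inclusions. For $\lf_n\subseteq\lf^n$, I unfold the definition and apply the induction hypothesis termwise:
\begin{equation*}
\lf_n=\sum_{k=1}^{n-1}\lf_k\lf_{n-k}=\sum_{k=1}^{n-1}\lf^k\lf^{n-k}\subseteq\lf^n,
\end{equation*}
where the final inclusion is an application of Proposition \ref{rightlowcenser} to each summand (since $k+(n-k)=n$). For the reverse inclusion $\lf^n\subseteq\lf_n$, I use the recursive definition on the other side: $\lf^n=\lf^{n-1}\lf=\lf_{n-1}\lf_1$ by the induction hypothesis, and this is one of the summands (the $k=n-1$ term) in the definition of $\lf_n$, so $\lf^n\subseteq\lf_n$.

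The only potential subtlety is making sure the summation $\sum_{k=1}^{n-1}\lf_k\lf_{n-k}$ is indexed so that both factors use indices strictly smaller than $n$, which is the case here ($1\le k\le n-1$ implies $1\le n-k\le n-1$), so the induction hypothesis applies to every factor. I do not anticipate any real obstacle: once Proposition \ref{rightlowcenser} is in hand, both inclusions are essentially one line each, and the induction is completely routine.
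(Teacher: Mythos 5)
Your proof is correct and is essentially identical to the paper's: both argue by (strong) induction, obtaining $\lf_n\subseteq\lf^n$ by applying the induction hypothesis termwise together with Proposition \ref{rightlowcenser}, and $\lf^n\subseteq\lf_n$ by recognizing $\lf^{n-1}\lf=\lf_{n-1}\lf_1$ as the $k=n-1$ summand. The paper likewise deduces the ``in particular'' clause directly from Proposition \ref{rightlowcenser} once the equality is established.
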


\begin{proof}
We only prove the first statement in Proposition \ref{lowcenserright} as this in conjunction
with Proposition \ref{rightlowcenser} implies the remaining statements. We proceed by induction
on $n$. The base step $n=1$ is an immediate consequence of the definitions. For the induction
step let $n>1$ be an integer and assume that the statement is true for any integer less than
$n$. Then by applying the induction hypothesis and Proposition \ref{rightlowcenser} we obtain
that $$\lf_n=\sum_{k=1}^{n-1}\lf_k\lf_{n-k}=\sum_{k=1}^{n-1}\lf^k\lf^{n-k}\subseteq\lf^n
\,.$$ On the other hand, it follows from the induction hypothesis that $$\lf^n=\lf^{n-1}\lf=
\lf_{n-1}\lf_1\subseteq\sum_{k=1}^{n-1}\lf_k\lf_{n-k}=\lf_n\,.$$ This completes the proof.
\end{proof}

By combining Proposition \ref{lowcenserleft} and Proposition \ref{lowcenserright} we obtain the
following result (cf.\ also \cite[Proposition 2.13]{BH}).

\begin{cor}\label{lowcenser}
If $\lf$ is a symmetric Leibniz algebra, then $\lf_n=\hspace{.1mm}^n\hspace{-.5mm}\lf=\lf^n$
for every positive integer $n$.
\end{cor}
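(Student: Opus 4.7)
The plan is essentially to observe that this corollary is a formal consequence of the two preceding propositions, since a symmetric Leibniz algebra is by definition simultaneously a left and a right Leibniz algebra, while the descending central series $(\lf_n)_{n\in\N}$ is defined purely in terms of the multiplication of $\lf$ and does not privilege either the left or the right Leibniz identity.

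Concretely, I would first invoke Proposition \ref{lowcenserleft}, which applies because $\lf$ is a left Leibniz algebra, to obtain the identity $\lf_n = {}^n\lf$ for every positive integer $n$. Then I would invoke Proposition \ref{lowcenserright}, which applies because $\lf$ is also a right Leibniz algebra, to obtain the identity $\lf_n = \lf^n$ for every positive integer $n$. Chaining these two equalities yields $\lf_n = {}^n\lf = \lf^n$, as desired.

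There is no genuine obstacle here: once both previous propositions are available, no further induction or computation is needed, and in particular one does not have to reverify the filtration properties, since these are already part of the statements being cited. The only point worth mentioning explicitly in the writeup is the observation that the sequence $(\lf_n)_{n\in\N}$ is defined without reference to the left or right Leibniz identity, which is why the two propositions can be applied to the same object.
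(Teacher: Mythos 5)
Your proposal is correct and is exactly the argument the paper intends: the corollary is stated as an immediate consequence of combining Proposition \ref{lowcenserleft} and Proposition \ref{lowcenserright}, which apply simultaneously because a symmetric Leibniz algebra is both a left and a right Leibniz algebra, while $(\lf_n)_{n\in\N}$ is defined independently of either identity. No further argument is needed.
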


Recall from Section 1 that an algebra $\lf$ is called {\em nilpotent\/} if there exists a positive
integer $n$ such that any product of $n$ elements in $\lf$, no matter how associated, is zero
(see \cite[p. 18]{S}). The following observation is clear.

\begin{lem}\label{defnilp}
A left or right Leibniz algebra $\lf$ is nilpotent if, and only if, there exists a positive integer $n$
such that $\lf_n=0$.
\end{lem}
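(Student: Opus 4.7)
The plan is to show that for any positive integer $n$, the subspace $\lf_n$ coincides with the $\F$-span of all products of $n$ elements of $\lf$, associated in any way. Once this characterization is established, the lemma follows immediately: the condition that every such $n$-fold product vanishes is equivalent to the vanishing of their linear span, i.e., to $\lf_n=0$.

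To prove this characterization, I would proceed by strong induction on $n$. The base case $n=1$ is trivial since $\lf_1=\lf$ is spanned by its elements (products of length one). For the inductive step, suppose the claim holds for all positive integers strictly less than $n$. Any product of $n$ elements, no matter how associated, can be written as $P_1 P_2$ where $P_1$ is a product of $k$ elements and $P_2$ is a product of $n-k$ elements for some $1\le k\le n-1$ (namely, split at the outermost multiplication). By the induction hypothesis, $P_1\in\lf_k$ and $P_2\in\lf_{n-k}$, so $P_1P_2\in\lf_k\lf_{n-k}\subseteq\sum_{k=1}^{n-1}\lf_k\lf_{n-k}=\lf_n$. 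Conversely, by the recursive definition $\lf_n=\sum_{k=1}^{n-1}\lf_k\lf_{n-k}$ and bilinearity of the multiplication, $\lf_n$ is spanned by products $uv$ with $u\in\lf_k$ and $v\in\lf_{n-k}$; applying the inductive hypothesis to $u$ and $v$ writes each such product as a linear combination of $n$-fold products of elements of $\lf$, so $\lf_n$ is contained in the span of all $n$-fold products.

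With this characterization in hand, the lemma is immediate. If $\lf$ is nilpotent, then by definition there exists $n$ such that every product of $n$ elements of $\lf$ vanishes; since $\lf_n$ is the span of such products, $\lf_n=0$. Conversely, if $\lf_n=0$ for some positive integer $n$, then every product of $n$ elements of $\lf$, no matter how associated, lies in $\lf_n=0$, so $\lf$ is nilpotent.

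The only mildly delicate point — and the one requiring the induction — is justifying that the recursively defined $\lf_n$ captures all possible associations of an $n$-fold product. Note that the Leibniz identities play no role here; the argument is purely about the combinatorics of associations and the definition of $\lf_n$, so the same proof works uniformly for left and right Leibniz algebras (in fact, for arbitrary non-associative algebras).
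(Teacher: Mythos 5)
Your proof is correct. The paper offers no argument at all here (it introduces the lemma with ``The following observation is clear''), and the justification you supply --- the strong induction showing that $\lf_n$ is precisely the $\F$-span of all products of $n$ elements of $\lf$ in every possible association, from which the equivalence with the definition of nilpotency is immediate --- is exactly the standard observation the author is implicitly relying on.
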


\noindent {\bf Examples.}
\begin{enumerate}
\item[(1)] The two-dimensional solvable left Leibniz algebra $\af_\ell$ from Example 1 in Section
                2 is not nilpotent as $(\af_\ell)_n=\hspace{.1mm}^n\hspace{-.1mm}\af_\ell=\F f\ne
                0$ for every integer $n\ge 3$. But note that $\af_r^n=0$ for every integer $n\ge 3$.
\item[(2)] The two-dimensional symmetric Leibniz algebra $\nf$ from Example~2 in Section 2 is
                nilpotent as $\nf_n=\hspace{.1mm}^n\hspace{-.1mm}\nf=\nf^n=0$ for every integer
                $n\ge 3$.
\item[(3)] The five-dimensional simple left Leibniz algebra $\Sf_\ell$ from Example 3 in Section 2
                 is not nilpotent as $(\Sf_\ell)_n=\hspace{.1mm}^n\hspace{-.1mm}\Sf_\ell=\Sf_\ell^n
                =\Sf_\ell\ne 0$ for every positive integer $n$.
\end{enumerate}
\vspace{.1cm}

In the proof of Proposition \ref{subalghomimnilp} we will need the following result.

\begin{lem}\label{homlowcenser}
If $\phi:\lf\to\kf$ is a homomorphism of left or right Leibniz algebras, then $\phi(\lf_n)=\phi(\lf)_n$
for every positive integer $n$.
\end{lem}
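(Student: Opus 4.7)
The plan is to prove this by induction on $n$, exploiting the recursive definition $\lf_n = \sum_{k=1}^{n-1}\lf_k\lf_{n-k}$ together with the fact that algebra homomorphisms are compatible with products of subspaces. Nothing here depends on whether $\lf$ is a left or right Leibniz algebra, so a single argument handles both cases.

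The base case $n=1$ is immediate from $\lf_1=\lf$ and $\phi(\lf)_1=\phi(\lf)$. For the inductive step, I would assume $\phi(\lf_k)=\phi(\lf)_k$ for every positive integer $k<n$. The essential preliminary observation is that for any two subspaces $\Sf,\tf\subseteq\lf$ one has $\phi(\Sf\tf)=\phi(\Sf)\phi(\tf)$: indeed, $\Sf\tf$ is spanned by products $st$ with $s\in\Sf$, $t\in\tf$, and since $\phi$ is linear and multiplicative, $\phi(\Sf\tf)$ is spanned by the $\phi(s)\phi(t)$, which by definition spans $\phi(\Sf)\phi(\tf)$. Applying this together with the linearity of $\phi$ on sums of subspaces gives
\begin{eqnarray*}
\phi(\lf_n) & = & \phi\!\left(\sum_{k=1}^{n-1}\lf_k\lf_{n-k}\right)=\sum_{k=1}^{n-1}\phi(\lf_k)\phi(\lf_{n-k})\\
& = & \sum_{k=1}^{n-1}\phi(\lf)_k\,\phi(\lf)_{n-k}=\phi(\lf)_n,
\end{eqnarray*}
where the third equality is the induction hypothesis and the last is the definition of the descending central series of $\phi(\lf)$.

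There is no real obstacle here; the only point that deserves care is the subspace identity $\phi(\Sf\tf)=\phi(\Sf)\phi(\tf)$, which needs to be stated explicitly since $\Sf\tf$ is defined as the span (not just the set) of products. Once that is noted, the induction runs mechanically.
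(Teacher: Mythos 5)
Your proof is correct and follows essentially the same route as the paper: strong induction on $n$ using the recursive definition $\lf_n=\sum_{k=1}^{n-1}\lf_k\lf_{n-k}$ together with $\phi(\Sf\tf)=\phi(\Sf)\phi(\tf)$. The only difference is that you spell out the subspace identity explicitly, which the paper leaves implicit.
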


\begin{proof}
We proceed by induction on $n$. The base step $n=1$ follows from $$\phi(\lf_1)=\phi(\lf)=\phi
(\lf)_1\,.$$ For the induction step let $n>1$ be an integer and assume that the statement is true
for any integer less than $n$. Then we obtain from the induction hypothesis that
\begin{eqnarray*}
\phi(\lf_n) & = & \phi(\sum_{k=1}^{n-1}\lf_k\lf_{n-k})=\sum_{k=1}^{n-1}\phi(\lf_k)\phi(\lf_{n-k})\\
& = & \sum_{k=1}^{n-1}\phi(\lf)_k\phi(\lf)_{n-k}=\phi(\lf)_n\,.
\end{eqnarray*}
This completes the proof.
\end{proof}

The next result is an immediate consequence of Lemma \ref{homlowcenser}.

\begin{pro}\label{subalghomimnilp}
Subalgebras and homomorphic images of nilpotent left or right Leibniz algebras are nilpotent.
\end{pro}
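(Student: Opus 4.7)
The plan is to deduce both parts from Lemma \ref{defnilp}, which reduces nilpotency to the vanishing of some $\lf_n$, and to handle the two cases in parallel using the descending central series $(\lf_n)_{n\in\N}$.

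For a subalgebra $\kf$ of a left or right Leibniz algebra $\lf$, I would first observe by a quick induction on $n$ that $\kf_n\subseteq\lf_n$ for every positive integer $n$: the base case $\kf_1=\kf\subseteq\lf=\lf_1$ is immediate, and the induction step follows from the recursive definition since
\[
\kf_n=\sum_{k=1}^{n-1}\kf_k\kf_{n-k}\subseteq\sum_{k=1}^{n-1}\lf_k\lf_{n-k}=\lf_n\,.
\]
If $\lf$ is nilpotent, Lemma \ref{defnilp} furnishes some $n$ with $\lf_n=0$, whence $\kf_n=0$, and applying Lemma \ref{defnilp} again shows that $\kf$ is nilpotent.

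For a homomorphic image, let $\phi:\lf\to\kf$ be a surjective homomorphism of left or right Leibniz algebras, so that $\kf=\phi(\lf)$. Then Lemma \ref{homlowcenser} gives $\kf_n=\phi(\lf)_n=\phi(\lf_n)$ for every positive integer $n$. If $\lf$ is nilpotent, choose $n$ with $\lf_n=0$; then $\kf_n=\phi(0)=0$, and Lemma \ref{defnilp} yields the nilpotency of $\kf$. (For a non-surjective homomorphism, one simply replaces the codomain by the image, which is a subalgebra of the codomain.)

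There is no real obstacle here: the only minor technical point is that Lemma \ref{homlowcenser} is phrased for the image under a homomorphism, so the subalgebra case needs the separate, entirely routine inclusion $\kf_n\subseteq\lf_n$ above, whereas the quotient case is a direct substitution into Lemma \ref{homlowcenser}.
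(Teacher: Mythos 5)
Your proposal is correct and follows essentially the same route as the paper, which simply declares the result an immediate consequence of Lemma \ref{homlowcenser}; you have merely filled in the routine details, including the inclusion $\kf_n\subseteq\lf_n$ for a subalgebra $\kf$ and the appeal to Lemma \ref{defnilp}, which the paper leaves implicit.
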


\begin{pro}\label{leftextnilp}
If $\If$ is an ideal of a left Leibniz algebra $\lf$ such that $\If\subseteq C_r(\lf)$ and $\lf/\If$ is
nilpotent, then $\lf$ is nilpotent.
\end{pro}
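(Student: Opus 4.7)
The plan is to show directly that $\lf_{n+1}=0$ for the same index $n$ that witnesses the nilpotency of $\lf/\If$, by combining the assumption $\If\subseteq C_r(\lf)$ (which is exactly the statement $\lf\cdot\If=0$) with the identification of the descending central series given in Proposition \ref{lowcenserleft}.

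First, I would apply Lemma \ref{defnilp} to $\lf/\If$ to obtain a positive integer $n$ with $(\lf/\If)_n=0$. Then Lemma \ref{homlowcenser} applied to the canonical projection $\pi\colon\lf\to\lf/\If$ yields $\pi(\lf_n)=(\lf/\If)_n=0$, i.e., $\lf_n\subseteq\If$.

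The key observation is that by Proposition \ref{lowcenserleft} we have $\lf_{n+1}={}^{n+1}\hspace{-.5mm}\lf$, and from the definition of the left descending central series this equals $\lf\cdot{}^n\hspace{-.5mm}\lf=\lf\cdot\lf_n$. Since $\lf_n\subseteq\If\subseteq C_r(\lf)$, every element of $\lf_n$ is annihilated on the right by every element of $\lf$, so $\lf\cdot\lf_n=0$. Therefore $\lf_{n+1}=0$, and Lemma \ref{defnilp} gives that $\lf$ is nilpotent.

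There is no real obstacle here: the argument relies on the neat identification of the two descending central series for left Leibniz algebras and on the fact that $\If\subseteq C_r(\lf)$ translates directly into $\lf\If=0$. The only small caveat is remembering that although $\lf_{n+1}$ is defined as a sum $\sum_{k=1}^{n}\lf_k\lf_{n+1-k}$, Proposition \ref{lowcenserleft} lets us replace this by the single expression $\lf\cdot\lf_n$, which is immediately zero under the hypothesis.
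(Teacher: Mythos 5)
Your argument is correct and is essentially the paper's own proof: the paper establishes the right-handed statement (Proposition \ref{rightextnilp}) via $\lf^{r+1}=\lf^r\lf\subseteq\If\lf=0$ and transfers to the left-handed case by passing to the opposite algebra, whereas you carry out the mirror-image computation $\lf_{n+1}=\lf\,\lf_n\subseteq\lf\If=0$ directly for left Leibniz algebras using Proposition \ref{lowcenserleft} and Lemma \ref{homlowcenser}. No gaps.
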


\begin{pro}\label{rightextnilp}
If $\If$ is an ideal of a right Leibniz algebra $\lf$ such that $\If\subseteq C_\ell(\lf)$ and $\lf/\If$
is nilpotent, then $\lf$ is nilpotent.
\end{pro}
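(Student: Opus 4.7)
The plan is to use the fact that nilpotency of $\lf/\If$ forces the descending central series of $\lf$ to land in $\If$ after finitely many steps, and then to multiply once more on the right to land in $\If\cdot\lf$, which vanishes by the left-centrality hypothesis.

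More precisely, I would first invoke Lemma \ref{homlowcenser} applied to the canonical projection $\pi:\lf\to\lf/\If$: since $\lf/\If$ is nilpotent, Lemma \ref{defnilp} yields a positive integer $n$ with $(\lf/\If)_n=0$, so $\pi(\lf_n)=0$, i.e., $\lf_n\subseteq\If$. Because $\lf$ is a right Leibniz algebra, Proposition \ref{lowcenserright} identifies $\lf_n$ with $\lf^n$, so $\lf^n\subseteq\If$.

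Next I would use the hypothesis $\If\subseteq C_\ell(\lf)$ which, by the very definition of the left center, says that $\If\cdot\lf=0$. Hence
\[
\lf_{n+1}=\lf^{n+1}=\lf^n\cdot\lf\subseteq\If\cdot\lf=0,
\]
and by Lemma \ref{defnilp} again, $\lf$ is nilpotent.

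There is no real obstacle here; the proof is a direct two-step descent. The only mild subtlety is remembering that for a right Leibniz algebra the correct descending central series coincides with $(\lf^n)_{n\in\N}$ (Proposition \ref{lowcenserright}), since this is what guarantees that multiplying the $n$-th term once more on the right (rather than on the left) gives the $(n+1)$-st term, which is exactly where the hypothesis $\If\subseteq C_\ell(\lf)$ can be brought to bear. The companion Proposition \ref{leftextnilp} then follows by the opposite-algebra trick, which swaps the roles of left and right centers.
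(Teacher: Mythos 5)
Your proposal is correct and follows essentially the same route as the paper: the paper's proof likewise extracts $r$ with $\lf^r=\lf_r\subseteq\If$ from the nilpotency of $\lf/\If$ and then computes $\lf^{r+1}=\lf^r\lf\subseteq\If\lf=0$ using $\If\subseteq C_\ell(\lf)$. You merely make explicit the appeal to Lemma \ref{homlowcenser} and Proposition \ref{lowcenserright} that the paper leaves implicit.
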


\begin{proof}
We only prove Proposition \ref{rightextnilp} as this yields Proposition \ref{leftextnilp} by considering
the opposite algebra.

Since $\lf/\If$ is nilpotent, there exists a positive integer $r$ such that $\lf^r=\lf_r\subseteq\If$.
But by hypothesis, we have that $\If\subseteq C_\ell(\lf)$, i.e., $\If\lf$=0. Hence $\lf_{r+1}=
\lf^{r+1}=\lf^r\lf\subseteq\If\lf=0$, and therefore $\lf$ is nilpotent.
\end{proof}

\begin{pro}\label{sumidnilp}
The sum of two nilpotent ideals of a left or right Leibniz algebra is nilpotent.
\end{pro}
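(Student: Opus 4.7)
The plan is to write a direct proof by a pigeonhole argument on nested products, reducing the right Leibniz case to the left case via the opposite algebra. Let $\If$ and $\jf$ be nilpotent ideals of a left Leibniz algebra $\lf$, and fix $p,q\in\N$ with $\If_p=0$ and $\jf_q=0$. Set $\kf:=\If+\jf$; the goal is to exhibit $n$ with $\kf_n=0$ and then invoke Lemma \ref{defnilp}.

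First I would record the following strengthening of Proposition \ref{leftlowcenser}: if $\af$ is any ideal of $\lf$, then each $\af_k$ is again an ideal of $\lf$. The proof proceeds by induction on $k$. The base step is the hypothesis on $\af$. For the inductive step one uses the recursive description $\af_n=\sum_{k=1}^{n-1}\af_k\af_{n-k}$ together with the left Leibniz identity to get $\lf(\af_k\af_{n-k})\subseteq(\lf\af_k)\af_{n-k}+\af_k(\lf\af_{n-k})\subseteq\af_n$, and identity (\ref{RLLI}) to get $(\af_k\af_{n-k})\lf\subseteq\af_k(\af_{n-k}\lf)+\af_{n-k}(\af_k\lf)\subseteq\af_n$, in both places appealing to the inductive hypothesis that $\af_k,\af_{n-k}$ are ideals of $\lf$.

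The heart of the argument is the following claim: any nested product $w$ of $n$ elements, each drawn from $\If\cup\jf$, that contains at least $p$ factors from $\If$ lies in $\If_p$, and analogously with $\jf,q$ in place of $\If,p$. I would prove this by induction on $n$, writing $w=u\cdot v$ with $u$ having $a$ factors from $\If$ and $v$ having $p-a$ (or more) such factors. If $0<a<p$, then by induction $u\in\If_a$ and $v\in\If_{p-a}$, so $w\in\If_a\If_{p-a}\subseteq\If_p$ by Proposition \ref{lowcenserleft} applied to the Leibniz subalgebra $\If$. If $a=0$, then $v\in\If_p$, and $w\in\lf\cdot\If_p\subseteq\If_p$ because $\If_p$ is an ideal of $\lf$ by the previous paragraph; the case $a=p$ is symmetric, using $\If_p\cdot\lf\subseteq\If_p$.

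Finally, a straightforward induction on $n$ from the recursive definition shows that $\kf_n$ is spanned by nested products of exactly $n$ elements taken from $\If\cup\jf$. Choose $n:=p+q-1$; then by pigeonhole every such product contains either at least $p$ factors from $\If$ or at least $q$ factors from $\jf$, hence lies in $\If_p=0$ or $\jf_q=0$. This forces $\kf_{p+q-1}=0$, so $\kf$ is nilpotent by Lemma \ref{defnilp}. The right Leibniz case follows by passing to the opposite algebra, since the opposite of an ideal is an ideal and nilpotency is preserved under taking opposites. I expect the main obstacle to be the first step: verifying cleanly that $\If_k$ remains a two-sided ideal of the ambient $\lf$, since both the left and right Leibniz-type identities must be juggled, whereas in the Lie case this is essentially automatic.
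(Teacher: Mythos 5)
Your proof is correct, but it takes a genuinely different route from the paper's. The paper (working with a right Leibniz algebra) uses the one-sided descending central series $\If^n$ of left-normed products: it first shows that $\If^n$ is a right ideal of $\lf$ whenever $\If$ is, then establishes by induction the classical containment $(\jf+\kf)^n\subseteq\sum_{r=1}^{n-1}(\jf^r\cap\kf^{n-r})$, and concludes $(\jf+\kf)^{s+t-1}=0$; passing back to nilpotency in the sense of arbitrary nested products then rests on Proposition \ref{lowcenserright}. You instead work directly with the symmetric series $\kf_n$ and argue by pigeonhole on arbitrary nested products, which requires the stronger, two-sided statement that each $\af_k$ remains an ideal of the ambient algebra; you prove this correctly using the left Leibniz identity for the left-ideal property and identity (\ref{RLLI}) for the right-ideal property. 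Your approach has the merit of never leaving the definition of nilpotency via nested products, so it does not need the identification $\lf_n={}^n\lf$ (resp.\ $\lf_n=\lf^n$), whereas the paper gets away with a one-sided ideal lemma but leans on Proposition \ref{lowcenserright} at the final step. Two small points of precision, neither affecting correctness: your key claim should be stated for all $k$ simultaneously (``at least $k$ factors from $\If$ implies membership in $\If_k$''), since the inductive step invokes it for $a<p$; and the final case should read $a\ge p$ rather than $a=p$, handled identically via $\If_p\lf\subseteq\If_p$ (or by noting $\If_a\subseteq\If_p$).
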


\begin{proof}
Let $\lf$ be a right Leibniz algebra. We begin by proving that $\If^n$ is a right ideal of $\lf$ for
any right ideal $\If$ of $\lf$ and for any positive integer $n$. We will proceed by induction on
$n$. The base step $\If^1\lf=\If\lf\subseteq\If=\If^1$ follows from the fact that $\If$ is a
right ideal of $\lf$. For the induction step let $n>1$ be an integer and assume that the statement
is true for $n-1$. Then we obtain from the right Leibniz identity and the induction hypothesis that
$$\If^n\lf=(\If^{n-1}\If)\lf\subseteq(\If^{n-1}\lf)\If+\If^{n-1}(\If\lf)\subseteq\If^{n-1}\If=
\If^n\,.$$

Now let $\jf$ and $\kf$ be two nilpotent ideals of $\lf$. We will prove that $$(\jf+\kf)^n\subseteq
\sum_{r=1}^{n-1}(\jf^r\cap\kf^{n-r})$$ for every positive integer $n$. We will again proceed by
induction on $n$. The base step $(\jf+\kf)^1=\jf+\kf=\jf^1+\kf^1$ is an immediate consequence
of the definition of the right descending central series. For the induction step let $n\ge 1$ be an integer
and assume that the statement is true for $n$. Then we obtain from the induction hypothesis and
the fact that $\jf^r$ and $\kf^{n-r}$ are right ideals of $\lf$:
\begin{eqnarray*}
(\jf+\kf)^{n+1} & = & (\jf+\kf)^n(\jf+\kf)\subseteq\sum_{r=1}^{n-1}(\jf^r\cap\kf^{n-r})(\jf+\kf)\\
& \subseteq & \sum_{r=1}^{n-1}(\jf^r\cap\kf^{n-r})\jf+\sum_{r=1}^{n-1}(\jf^r\cap\kf^{n-r})\kf\\
& \subseteq & \sum_{r=1}^{n-1}(\jf^{r+1}\cap\kf^{n-r})+\sum_{r=1}^{n-1}(\jf^r\cap\kf^{n+1-r})\\
& = & \sum_{s=2}^n(\jf^s\cap\kf^{n+1-s})+\sum_{r=1}^{n-1}(\jf^r\cap\kf^{n+1-r})\\
& = & \jf^n\cap\kf^1+\sum_{r=2}^{n-1}(\jf^r\cap\kf^{n+1-r})+\jf^1\cap\kf^n\\
& = & \sum_{r=1}^n(\jf^r\cap\kf^{n+1-r})\,.
\end{eqnarray*}
Finally, as $\jf$ and $\kf$ are nilpotent, there exist positive integers $s$ and $t$ such that $\jf^s
=0$ and $\kf^t=0$. Hence we obtain that $(\jf+\kf)^{s+t-1}=0$ which shows that $\jf+\kf$
is a nilpotent ideal of $\lf$.
\end{proof}

Using Proposition \ref{sumidnilp} one can proceed similar to the solvable case to establish the
existence of a largest nilpotent ideal (see Section 1). We leave the details to the interested
reader (see also \cite[Proposition 1]{G}).

Now we prove the Lie analogue of a consequence of \cite[Theorem 2.4]{S} for symmetric Leibniz
algebras.

\begin{thm}\label{nilp}
For every symmetric Leibniz algebra $\lf$ the following statements are equivalent:
\begin{enumerate}
\item[(i)]   $\lf$ is nilpotent.
\item[(ii)]  $\lf_\lie$ is nilpotent.
\item[(iii)]  $\lf/C(\lf)$ is nilpotent.
\item[(iv)] $\lie(\lf)$ is nilpotent.
\end{enumerate}
\end{thm}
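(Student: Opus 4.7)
The plan is to prove the cycle of implications (i) $\Rightarrow$ (ii) $\Rightarrow$ (iii) $\Rightarrow$ (iv) $\Rightarrow$ (i), using the tools already developed: the identification of $L(\lf)$ and $R(\lf)$ with quotients of $\lf$ by its one-sided centers, the description $\lie(\lf)=L(\lf)+R(\lf)$ from Theorem \ref{liemult}, Corollary \ref{kercen} which gives $\leib(\lf)\subseteq C(\lf)$ in the symmetric case, and the extension results Proposition \ref{subalghomimnilp}, Proposition \ref{leftextnilp}, Proposition \ref{rightextnilp}, and Proposition \ref{sumidnilp}.

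For (i) $\Rightarrow$ (ii) and (ii) $\Rightarrow$ (iii), the argument is immediate from Proposition \ref{subalghomimnilp}: $\lf_\lie=\lf/\leib(\lf)$ and $\lf/C(\lf)$ are both homomorphic images of $\lf$, and by Corollary \ref{kercen} we have $\leib(\lf)\subseteq C(\lf)$, so $\lf/C(\lf)$ is itself a homomorphic image of $\lf_\lie$.

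For (iii) $\Rightarrow$ (iv), since $C(\lf)=C_\ell(\lf)\cap C_r(\lf)$, the canonical maps $\lf/C(\lf)\twoheadrightarrow\lf/C_\ell(\lf)$ and $\lf/C(\lf)\twoheadrightarrow\lf/C_r(\lf)$ show that both $\lf/C_\ell(\lf)$ and $\lf/C_r(\lf)$ are nilpotent. By Proposition \ref{leftmult} and Proposition \ref{rightmult} these are isomorphic as Lie algebras to $L(\lf)$ and $R(\lf)^\op$ respectively, so $L(\lf)$ and $R(\lf)$ are both nilpotent Lie algebras. By Theorem \ref{liemult} they are ideals of $\der(\lf)$, and in particular ideals of their sum $\lie(\lf)=L(\lf)+R(\lf)$; Proposition \ref{sumidnilp} applied inside the Lie algebra $\lie(\lf)$ then yields that $\lie(\lf)$ is nilpotent.

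For (iv) $\Rightarrow$ (i), nilpotency of $\lie(\lf)$ passes to its subalgebras $L(\lf)$ and $R(\lf)$ by Proposition \ref{subalghomimnilp}, so both $\lf/C_\ell(\lf)\cong L(\lf)$ and $\lf/C_r(\lf)\cong R(\lf)^\op$ are nilpotent. Embedding $\lf/C(\lf)$ diagonally into the direct product $\lf/C_\ell(\lf)\times\lf/C_r(\lf)$ shows that $\lf/C(\lf)$ is nilpotent, and then Proposition \ref{leftextnilp} applied to the ideal $C(\lf)\subseteq C_r(\lf)$ (which sits inside the right center as required) gives that $\lf$ itself is nilpotent. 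The step I expect to be most delicate is (iii) $\Rightarrow$ (iv): one must be careful that $L(\lf)$ and $R(\lf)$, which are ideals of $\der(\lf)$, remain ideals inside the possibly larger Lie algebra in which one wants to apply the ``sum of nilpotent ideals'' result — here this is free because $\lie(\lf)$ is precisely $L(\lf)+R(\lf)$, so both summands are automatically ideals of $\lie(\lf)$ and Proposition \ref{sumidnilp} applies cleanly.
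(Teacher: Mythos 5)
Your proposal is correct and follows essentially the same route as the paper: the implications (i)$\Rightarrow$(ii)$\Rightarrow$(iii) via Proposition \ref{subalghomimnilp} and Corollary \ref{kercen}, and (iii)$\Rightarrow$(iv) via the epimorphisms onto $\lf/C_\ell(\lf)\cong L(\lf)$ and $\lf/C_r(\lf)\cong R(\lf)^\op$ together with Theorem \ref{liemult} and Proposition \ref{sumidnilp}, are exactly the paper's arguments. The only (harmless) deviation is in (iv)$\Rightarrow$(i), where the paper passes through $L(\lf)$ alone and applies Proposition \ref{rightextnilp} with the ideal $C_\ell(\lf)$, whereas you go through both one-sided centers, a diagonal embedding of $\lf/C(\lf)$ into a product, and Proposition \ref{leftextnilp} with $C(\lf)$ --- slightly longer, but equally valid.
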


\begin{proof}
The implication (i)$\Rightarrow$(ii) is an immediate consequence of Proposition \ref{subalghomimnilp}.
It follows from Corollary \ref{kercen} that there is a natural epimorphism $\lf_\lie\to\lf/C(\lf)$ of Lie
algebras. Hence another application of Proposition \ref{subalghomimnilp} yields the implication
(ii)$\Rightarrow$(iii). Moreover, the implication (iii)$\Rightarrow$(i) follows from Proposition~\ref{leftextnilp}.

Suppose now that $\lf/C(\lf)$ is nilpotent. Then we obtain from Proposition \ref{leftmult} that there
is a natural epimorphism $\lf/C(\lf)\to\lf/C_\ell(\lf)\cong L(\lf)$, and therefore $L(\lf)$ is nilpotent.
Similarly, we obtain from Proposition \ref{rightmult} that there is a natural epimorphism $\lf/C(\lf)
\to\lf/C_r(\lf)\cong R(\lf)^\op$, and thus $R(\lf)$ is nilpotent. Hence it follows from  Proposition
\ref{sumidnilp} and Theorem \ref{liemult} that $\lie(\lf)$ is nilpotent. This establishes the implication
(iii)$\Rightarrow$(iv). Finally, the implication (iv)$\Rightarrow$(i) can be obtained from Proposition
\ref{subalghomimnilp} in conjunction with Proposition \ref{leftmult} and Proposition~\ref{rightextnilp}.
\end{proof}

\noindent {\bf Example.} Let $\af_\ell$ denote the two-dimensional non-nilpotent solvable left
Leibniz algebra $\af_\ell$ from Example 1  in Section 2. As has been observed above, we have
that $C_\ell(\af_\ell)=\F f=\leib(\af_\ell)$ and $C_r(\af_\ell)=\F e$. Hence $\lf_\lie=\lf/\leib(\lf)$,
$L(\lf)\cong\lf/C_\ell(\lf)$, and $R(\lf)\cong[\lf/C_r(\lf)]^\op$ are one-dimensional. So each of
these Lie algebras is abelian, and thus nilpotent, but $\af_\ell$ is not nilpotent. This shows that
the implication (ii)$\Rightarrow$(i) in Theorem \ref{nilp} neither holds for left nor for right
Leibniz algebras. Moreover, neither of the implications (iii)$\Rightarrow$(i) and (iv)$\Rightarrow$(i)
in Theorem~\ref{nilp} holds for left (resp.\ right) Leibniz algebras if one replaces $C(\lf)$ by
$C_\ell(\lf)$ (resp.\ $C_r(\lf)$) and $\lie(\lf)$ by $L(\lf)$ (resp.\ $R(\lf)$).
\vspace{.2cm}

The Leibniz analogue of Engel's theorem for Lie algebras of linear transformations was first
proved by Patsourakos \cite[Theorem 7]{P1} and later by Barnes \cite[Theorem 1.2]{B2}
who used the corresponding result for Lie algebras in his proof. Note that Patsourakos does
not have to assume that the representation is finite-dimensional. For the convenience of the
reader we include a variation of Barnes' proof.

\begin{thm}\label{ej}
Let $\lf$ be a finite-dimensional  left Leibniz algebra, and let $(\lambda,\rho)$ be a representation
of $\lf$ on a non-zero finite-dimensional vector space $M$ such that $\lambda_x$ is nilpotent for
every element $x\in\lf$. Then $\rho_x$ is nilpotent for every element $x\in\lf$, and there exists
a non-zero vector $m\in M$ such that $\lambda_x(m)=0=\rho_x(m)$ for every element $x\in\lf$.
\end{thm}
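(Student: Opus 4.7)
The strategy is to split the theorem into two parts: first deduce the nilpotency of $\rho_x$ from the bimodule identities, then find the common zero by a double application of Engel's theorem for Lie algebras.

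\emph{Nilpotency of $\rho_x$.} Equating the two expressions for $\rho_{xy}$ coming from (LMLrep) and (MLLrep) yields $\lambda_x\circ\rho_y-\rho_y\circ\lambda_x=\rho_y\circ\rho_x+\lambda_x\circ\rho_y$, that is, $\rho_y\circ(\lambda_x+\rho_x)=0$ for all $x,y\in\lf$. Specialising to $y=x$ gives $\rho_x^2=-\rho_x\circ\lambda_x$, and a straightforward induction then yields $\rho_x^{k+1}=(-1)^k\rho_x\circ\lambda_x^k$ for every $k\ge 1$. Since $\lambda_x$ is nilpotent, so is $\rho_x$.

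\emph{A first common zero via the classical Engel theorem.} By (LLMrep), $\lambda(\lf)$ is a Lie subalgebra of $\gl(M)$, and by hypothesis every element of it is nilpotent. Engel's theorem for Lie algebras therefore guarantees that $M_0:=\{m\in M\mid\lambda_x(m)=0\text{ for all }x\in\lf\}$ is non-zero.

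\emph{Upgrading to a common zero of $\lambda$ and $\rho$.} The difficulty is that $M_0$ need not be invariant under the operators $\rho_x$, so one cannot simply iterate Engel on $M_0$. If $\rho_x(M_0)=0$ for every $x\in\lf$, any non-zero element of $M_0$ is the desired vector. Otherwise set $V:=\langle\rho_x(m)\mid x\in\lf,\ m\in M_0\rangle_\F\neq 0$. Two observations carry the argument. First, from the identity $\rho_y\circ(\lambda_x+\rho_x)=0$ of the first step and $\lambda_x(m)=0$ for $m\in M_0$, one has $\rho_y(\rho_x(m))=-\rho_y(\lambda_x(m))=0$, so $V\subseteq M_0':=\bigcap_{y\in\lf}\Ker(\rho_y)$. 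Second, (LMLrep) gives $\lambda_y(\rho_x(m))=\rho_{yx}(m)+\rho_x(\lambda_y(m))=\rho_{yx}(m)\in V$ for $m\in M_0$, so $V$ is $\lambda$-invariant. Applying Engel's theorem for Lie algebras a second time, now to $\lambda(\lf)|_V$ acting on the non-zero finite-dimensional subspace $V$, produces a non-zero $v\in V\cap M_0\subseteq M_0\cap M_0'$, which is annihilated by every $\lambda_x$ and every $\rho_x$.

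\emph{Main obstacle.} The only real subtlety is the one just flagged: the naive wish that $M_0$ be a sub-bimodule fails, and the right substitute is the observation that the image $\rho(\lf)(M_0)$ automatically lands in the common kernel of all $\rho_y$ (because of the square-vanishing relation $\rho_y\rho_x=-\rho_y\lambda_x$) while remaining closed under the $\lambda$-action (via (LMLrep)). This is what permits the second application of Engel's theorem and closes the argument.
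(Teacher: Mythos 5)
Your proof is correct. The first step (deriving $\rho_y\circ(\lambda_x+\rho_x)=0$ from (\ref{LMLrep}) and (\ref{MLLrep}) and then $\rho_x^{k+1}=(-1)^k\rho_x\circ\lambda_x^k$ by induction) is essentially identical to the paper's, which phrases the same identity as Lemma \ref{LRRR}. For the existence of the common zero vector, however, you take a genuinely different route. The paper picks an irreducible subbimodule $N$ of $M$ and invokes Theorem \ref{irrbimod} to conclude that $N$ is symmetric or anti-symmetric, so that on $N$ one has $\rho_x=-\lambda_x$ or $\rho_x=0$; a single application of Engel's theorem to $\lambda(\lf)$ acting on $N$ then finishes the argument. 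You instead apply Engel twice: first on $M$ to produce $M_0=\bigcap_x\Ker(\lambda_x)\ne 0$, and then, after observing that $V:=\rho(\lf)(M_0)$ lies in $\bigcap_y\Ker(\rho_y)$ (by the square-vanishing relation) and is $\lambda$-invariant (by (\ref{LMLrep})), on $V$ in the case $V\ne 0$. Both arguments are sound; I checked that your $V$ is indeed $\lambda$-invariant and killed by every $\rho_y$, and that the case split $V=0$ versus $V\ne 0$ covers everything. Your version is more self-contained, using only the representation identities and the classical Engel theorem, whereas the paper's version leans on the structure theory of irreducible bimodules and, as a byproduct, also yields the irreducible case without assuming $M$ finite dimensional (which the paper exploits in Corollary \ref{striang}(a)).
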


\begin{proof} 
We first prove that for any element $x\in\lf$ the nilpotency of $\lambda_x$ implies the nilpotency
of $\rho_x$ (see also \cite[Lemma 6]{P1}). Namely, we have $\rho_x^n=(-1)^{n-1}\rho_x
\circ\lambda_x^{n-1}$ for every element $x\in\lf$ and every positive integer $n$. This can be
shown by induction on $n$. The base step $n=1$ is trivial. For the induction step let $n\ge 1$
and assume that the statement is true for $n$. Then it follows from the induction hypothesis and 
Lemma \ref{LRRR} that $$\rho_x^{n+1}=\rho_x\circ\rho_x^n=(-1)^{n-1}\rho_x\circ\rho_x\circ
\lambda_x^{n-1}=-(-1)^{n-1}\rho_x\circ\lambda_x\circ\lambda_x^{n-1}=(-1)^n\rho_x\circ
\lambda_x^n\,.$$

Since the $\lf$-bimodule $M$ is finite-dimensional, it has an irreducible $\lf$-subbi\-module $N$.
We obtain from Theorem \ref{irrbimod} that $N$ is symmetric or anti-symmetric. In the former
case we have that $\rho_x=-\lambda_x$, and in the latter case we have that $\rho_x=0$ for
every $x\in\lf$. It follows from the linearity of $\lambda$ and (\ref{LLMrep}) that $\lambda(\lf)$
is a Lie subalgebra of $\gl(M)$. So the existence of a non-zero vector $m\in N$ such that
$\lambda_x(m)=0$ for every $x\in\lf$ can be obtained from Engel's theorem for Lie algebras
of linear transformations (see \cite[Theorem 3.3]{H}). Finally, this and the symmetry or
anti-symmetry of $N$ yield $\rho_x(m)=0$ for every $x\in\lf$.
\end{proof}

We conclude this section by several applications of Theorem \ref{ej}. The first result is just a
reformulation of Theorem \ref{ej} in terms of a composition series of a Leibniz bimodule (see also
\cite[Corollary 9]{P1}).

\begin{cor}\label{striang}
Let $\lf$ be a finite-dimensional  left Leibniz algebra over a field $\F$, and let $(\lambda,\rho)$ be
a representation of $\lf$ on a non-zero finite-dimensional vector space $M$ such that $\lambda_x$
is nilpotent for every element $x\in\lf$. Then the following statements hold:
\begin{enumerate}
\item[(a)] If $M$ is irreducible, then $M$ is the one-dimensional trivial $\lf$-bimodule.
\item[(b)] If $M$ is finite-dimensional, then every composition series $$0=M_0\subsetneqq M_1
                \subsetneqq\cdots\subsetneqq M_n=M$$ of $M$ satisfies $\dim_\F M_j=j$, $\lambda_x
                (M_j)\subseteq M_{j-1}$, and $\rho_x(M_j)\subseteq M_{j-1}$ for every integer $j\in
                \{1,\dots,n\}$ and every element $x\in\lf$.
\end{enumerate}
\end{cor}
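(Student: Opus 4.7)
My plan is to deduce both parts directly from Theorem~\ref{ej}, treating (a) as the base case and (b) as an iterated application along the composition series.

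For part (a), assume $M$ is irreducible. Theorem~\ref{ej} produces a non-zero vector $m\in M$ with $\lambda_x(m)=0=\rho_x(m)$ for every $x\in\lf$. Then $\F m$ is a one-dimensional sub-bimodule of $M$ on which both actions vanish, so it is a non-zero trivial sub-bimodule. Irreducibility forces $M=\F m$, so $M$ is the one-dimensional trivial $\lf$-bimodule.

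For part (b), fix a composition series $0=M_0\subsetneqq M_1\subsetneqq\cdots\subsetneqq M_n=M$ of sub-bimodules. The automatic properties $\lambda_x(M_j)\subseteq M_j$ and $\rho_x(M_j)\subseteq M_j$ mean that the actions descend to each factor $M_j/M_{j-1}$. The key observation is that the induced left action $\overline{\lambda}_x$ on $M_j/M_{j-1}$ is still nilpotent: since $\lambda_x$ is nilpotent on $M$, its restriction to the sub-bimodule $M_j$ is nilpotent, and therefore so is the operator it induces on the quotient. Since the factor $M_j/M_{j-1}$ is irreducible by definition of a composition series, part (a) applies and yields that $M_j/M_{j-1}$ is one-dimensional with trivial $\lf$-bimodule structure. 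Consequently $\dim_\F M_j=\dim_\F M_{j-1}+1$, which gives $\dim_\F M_j=j$ by induction, and the triviality of the action on the quotient translates exactly into $\lambda_x(M_j)\subseteq M_{j-1}$ and $\rho_x(M_j)\subseteq M_{j-1}$ for every $x\in\lf$.

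There is no real obstacle in this argument; the only point that requires a brief justification is the transfer of nilpotency from $\lambda_x$ on $M$ to $\overline{\lambda}_x$ on $M_j/M_{j-1}$, which is a standard observation. (Strictly speaking, the hypothesis ``$M$ is finite-dimensional'' in (b) is redundant since it already appears in the statement of the corollary, and a composition series only exists under such a finiteness assumption.)
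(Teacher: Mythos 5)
Your argument is correct and follows essentially the same route as the paper: part (a) comes from the annihilated vector supplied by Theorem~\ref{ej} together with irreducibility, and part (b) is obtained by applying part (a) to each composition factor, the only point needing mention being that nilpotency of $\lambda_x$ passes to the induced operators on the quotients. The paper phrases (a) as following from the \emph{proof} of Theorem~\ref{ej} (to note that finite-dimensionality is not needed in the irreducible case), but since the corollary assumes $M$ finite-dimensional your direct appeal to the theorem's statement is equally valid.
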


\begin{proof}
(a) is an immediate consequence of the proof of Theorem \ref{ej} as we did not use the finite
dimension of $M$ in the irreducible case.

(b): We apply part (a) to each composition factor $M_j/M_{j-1}$ ($1\le j\le n$).
\end{proof}

Next, we specialize Corollary \ref{striang} and Theorem \ref{ej} to the adjoint Leibniz bimodule.

\begin{cor}\label{adjbimodnilp}
Let $\lf$ be a finite-dimensional  nilpotent left Leibniz algebra over a field $\F$, and let $\If$ be a
$d$-dimensional ideal of $\lf$. Then the following statements hold:
\begin{enumerate}
\item[(a)] There exists an ascending chain $$0=\lf_0\subsetneqq\lf_1\subsetneqq\cdots\subsetneqq
                \lf_n=\lf$$ of ideals of $\lf$ such that $\If=\lf_d$, $\dim_\F\lf_j=j$, $\lf\lf_j\subseteq
                \lf_{j-1}$, and $\lf_j\lf\subseteq\lf_{j-1}$ for every integer $j\in\{1,\dots,n\}$.
\item[(b)] If $\If\ne 0$, then $\lf\If\subsetneqq\If$ and $\If\lf\subsetneqq\If$
\item[(c)] If $\If\ne 0$, then $\If\cap C(\lf)\ne 0$.
\end{enumerate}
\end{cor}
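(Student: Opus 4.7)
The plan is to deduce all three parts from Corollary \ref{striang}(b) applied to the adjoint bimodule $\lf_\ad$. First I will verify the hypothesis: since $\lf$ is nilpotent, there exists $N \in \N$ with $\lf_N = 0$, so $L_x^N = 0$ for every $x \in \lf$. Thus the representation $(L,R)$ of $\lf$ on the non-zero finite-dimensional space $\lf_\ad$ has $\lambda_x = L_x$ nilpotent for every $x \in \lf$. I will also use the observation that $\lf$-subbimodules of $\lf_\ad$ coincide with ideals of $\lf$, so $\If$ is in particular an $\lf$-subbimodule.

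For part (a), I will refine the chain $0 \subseteq \If \subseteq \lf$ of subbimodules to a composition series of $\lf_\ad$, which is possible by the finite dimension of $\lf$. Write it as $0 = \lf_0 \subsetneq \lf_1 \subsetneq \cdots \subsetneq \lf_n = \lf$ with $\lf_d = \If$, where $d = \dim_\F \If$. Corollary \ref{striang}(b) then yields $\dim_\F \lf_j = j$ together with $\lf \lf_j = \lambda(\lf)(\lf_j) \subseteq \lf_{j-1}$ and $\lf_j \lf = \rho(\lf)(\lf_j) \subseteq \lf_{j-1}$ for every $j \in \{1,\dots,n\}$. Since each $\lf_j$ is a subbimodule of $\lf_\ad$, it is an ideal of $\lf$, which is what (a) asserts.

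Parts (b) and (c) will then fall out of (a) with no further work. Assuming $\If \ne 0$, we have $d \ge 1$, so
\[
\lf\If = \lf\lf_d \subseteq \lf_{d-1} \subsetneq \lf_d = \If,
\]
and symmetrically $\If\lf \subseteq \lf_{d-1} \subsetneq \If$, which gives (b). For (c), note that $\lf_1 \subseteq \lf_d = \If$ is a one-dimensional ideal satisfying $\lf\lf_1 \subseteq \lf_0 = 0$ and $\lf_1\lf \subseteq \lf_0 = 0$, so $0 \ne \lf_1 \subseteq C(\lf) \cap \If$.

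Since Corollary \ref{striang}(b) carries the real content, there is no substantive obstacle in this argument; the only point that needs a moment of care is arranging a composition series of $\lf_\ad$ that passes through the prescribed ideal $\If$, and this is a standard refinement in the finite-dimensional setting.
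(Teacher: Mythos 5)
Your proof is correct and follows essentially the same route as the paper: both hinge on refining $0\subseteq\If\subseteq\lf$ to a composition series of the adjoint bimodule and invoking Corollary \ref{striang}. The only (harmless) divergence is in part (c), where the paper applies Theorem \ref{ej} directly to the subbimodule $\If$ to produce a common annihilated vector, whereas you read the same conclusion off the chain from (a) via the one-dimensional ideal $\lf_1\subseteq\If$ with $\lf\lf_1=\lf_1\lf=0$; both arguments are valid and equally short.
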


\begin{proof}
Since $\lf$ is nilpotent, we have that $L_x$ is nilpotent for every element $x\in\lf$.

(a): Choose a composition series of the adjoint $\lf$-bimodule that contains $\If$ (see \cite[Proposition
1.1.1]{SF}) and apply Corollary \ref{striang} to the adjoint representation $(L,R)$ of $\lf$.

(b) is an immediate consequence of part (a).

(c): Note that $\If$ is an $\lf$-subbimodule of the adjoint $\lf$-bimodule. It follows from Theorem
\ref{ej} that there exists a non-zero element $y\in\If$ such that $L_x(y)=0=R_x(y)$, i.e.,
$xy=0=yx$ for every element $x\in\lf$. Hence $0\ne y\in C_r(\lf)\cap C_\ell(\lf)=C(\lf)$.
\end{proof}

Finally, we prove Engel's theorem for left Leibniz algebras (see \cite[Theorem 2]{AO1} and \cite[Corollary
10]{P1}).

\begin{cor}\label{engel}
If $\lf$ is a finite-dimensional  left Leibniz algebra such that $L_x$ is nilpotent for every element
$x\in\lf$, then $\lf$ is nilpotent.
\end{cor}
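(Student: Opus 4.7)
The plan is to apply Theorem \ref{ej} to the adjoint representation $(L,R)$ of $\lf$ on itself and then argue by induction on $\dim_\F\lf$. The hypothesis supplies exactly what is needed to invoke Theorem \ref{ej} for the adjoint bimodule $\lf_\ad$, and once the center is shown to be non-zero, the result drops out of Proposition \ref{leftextnilp}.

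More precisely, first I would dispose of the trivial case $\lf=0$ and then proceed by induction on $\dim_\F\lf$. For the inductive step, assume $\lf\ne 0$. Since $\lf_\ad$ is a non-zero finite-dimensional $\lf$-bimodule whose associated representation is $(L,R)$, and since by hypothesis $L_x$ is nilpotent for every $x\in\lf$, Theorem \ref{ej} produces a non-zero element $m\in\lf$ with $L_x(m)=0=R_x(m)$ for every $x\in\lf$. This means $xm=0=mx$ for every $x\in\lf$, that is, $m\in C_\ell(\lf)\cap C_r(\lf)=C(\lf)$. Hence $C(\lf)\ne 0$.

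Next I would pass to the quotient $\lf/C(\lf)$, which is a left Leibniz algebra of strictly smaller dimension. For each $x\in\lf$, the left multiplication operator on $\lf/C(\lf)$ by the coset $x+C(\lf)$ is induced by $L_x$, hence is nilpotent as well. By the induction hypothesis, $\lf/C(\lf)$ is nilpotent. Since $C(\lf)\subseteq C_r(\lf)$ and $\lf/C(\lf)$ is nilpotent, Proposition \ref{leftextnilp} yields that $\lf$ itself is nilpotent, completing the induction.

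The only real content of the argument is the appeal to Theorem \ref{ej}, which is where the non-trivial work has already been done; the rest is bookkeeping. The one small point to verify carefully is that the hypothesis transfers to the quotient, but this is immediate because quotient maps intertwine left multiplication operators, so nilpotency of $L_x$ on $\lf$ implies nilpotency of the induced operator on $\lf/C(\lf)$.
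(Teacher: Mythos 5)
Your argument is correct, but it follows a genuinely different route from the paper's. The paper deduces Corollary \ref{engel} from Corollary \ref{adjbimodnilp}\,(a): it takes the full flag of ideals $0=\lf_0\subsetneqq\lf_1\subsetneqq\cdots\subsetneqq\lf_n=\lf$ with $\lf\lf_j\subseteq\lf_{j-1}$ and $\lf_j\lf\subseteq\lf_{j-1}$ (obtained from a composition series of the adjoint bimodule via Corollary \ref{striang}) and then shows by induction on $k$ that $^{k}\lf\subseteq\lf_{n-k+1}$, so that $^{n+1}\lf=0$. You instead extract from Theorem \ref{ej} only a single non-zero element of $C(\lf)$ (essentially Corollary \ref{adjbimodnilp}\,(c) applied to $\If=\lf$), and then run an induction on $\dim_\F\lf$, lifting nilpotency from $\lf/C(\lf)$ back to $\lf$ via Proposition \ref{leftextnilp}. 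Both approaches rest entirely on Theorem \ref{irrbimod}/Theorem \ref{ej}; the paper's version gives slightly more (an explicit central flag and the bound $^{n+1}\lf=0$ with $n=\dim_\F\lf$), while yours is a cleaner reduction that isolates exactly the one fact needed, namely $C(\lf)\ne 0$. One small point you use implicitly and should state: $C(\lf)=C_\ell(\lf)\cap C_r(\lf)$ is indeed an ideal of $\lf$ (so that the quotient $\lf/C(\lf)$ and the appeal to Proposition \ref{leftextnilp} are legitimate), but this is immediate since $C(\lf)\lf=0$ by membership in $C_\ell(\lf)$ and $\lf C(\lf)=0$ by membership in $C_r(\lf)$. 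Your verification that nilpotency of $L_x$ descends to the quotient is also fine, since the quotient map intertwines $L_x$ with the induced operator and is surjective.
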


\begin{proof}
In the notation of Corollary \ref{adjbimodnilp}, we have that $\hspace{.1mm}^k\hspace{-.5mm}\lf
\subseteq\lf_{n-k+1}$ for every positive integer $k$, and therefore $\hspace{.1mm}^{n+1}
\hspace{-.5mm}\lf=0$. We proceed by induction on $k$ to prove the former statement. The base
step $k=1$ is just $\hspace{.1mm}^1\hspace{-.5mm}\lf=\lf=\lf_n$. For the induction step let
$k>1$ be an integer and assume that $\hspace{.1mm}^{k-1}\hspace{-.5mm}\lf\subseteq
\lf_{n-k+2}$ is true. Then we obtain from the induction hypothesis in conjunction with Corollary
\ref{adjbimodnilp}\,(a) that $\hspace{.1mm}^k\hspace{-.5mm}\lf=\lf\hspace{.5mm}^{k-1}
\hspace{-.5mm}\lf\subseteq\lf\lf_{n-k+2}\subseteq\lf_{n-k+1}$.
\end{proof}


\section{Solvable Leibniz algebras}


In the case of solvable Leibniz algebras we have one-sided analogues of Theorem~\ref{nilp}.
These show that the solvability of a left or right Leibniz algebra $\lf$ is equivalent to the
solvability of several Lie algebras associated to $\lf$.

\begin{pro}\label{leftsolv}
For every left Leibniz algebra $\lf$ the following statements are equivalent:
\begin{enumerate}
\item[(i)]   $\lf$ is solvable.
\item[(ii)]  $\lf_\lie$ is solvable.
\item[(iii)] $\lf/C_\ell(\lf)$ is solvable.
\item[(iv)] $L(\lf)$ is solvable.
\end{enumerate}
\end{pro}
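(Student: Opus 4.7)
The plan is to prove the implications in the cyclic order (i)$\Rightarrow$(ii)$\Rightarrow$(iii)$\Rightarrow$(iv)$\Rightarrow$(i), leveraging the three previously established facts that $\leib(\lf)\subseteq C_\ell(\lf)$ (Proposition \ref{leftker}), that $C_\ell(\lf)$ is an abelian ideal (Proposition \ref{leftcen}), and that $L(\lf)\cong\lf/C_\ell(\lf)$ as Lie algebras (Proposition \ref{leftmult}). In particular, the abelianness of $C_\ell(\lf)$ is what makes everything work, because it guarantees that the kernel of the canonical projection $\lf\to\lf/C_\ell(\lf)$ is automatically solvable.

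For (i)$\Rightarrow$(ii), I would simply note that $\lf_\lie=\lf/\leib(\lf)$ is a homomorphic image of $\lf$ and apply Proposition \ref{subalghomimsolv}. For (ii)$\Rightarrow$(iii), the inclusion $\leib(\lf)\subseteq C_\ell(\lf)$ together with the correspondence theorem gives a surjective Lie algebra homomorphism $\lf_\lie=\lf/\leib(\lf)\twoheadrightarrow\lf/C_\ell(\lf)$, so once again Proposition \ref{subalghomimsolv} yields solvability of $\lf/C_\ell(\lf)$. The implication (iii)$\Rightarrow$(iv) is immediate from the isomorphism $\lf/C_\ell(\lf)\cong L(\lf)$ given by Proposition \ref{leftmult}.

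The only implication that requires a new idea is (iv)$\Rightarrow$(i), but it is still quite short. From Proposition \ref{leftmult} the solvability of $L(\lf)$ is equivalent to that of $\lf/C_\ell(\lf)$. Proposition \ref{leftcen} tells us that $C_\ell(\lf)$ is an abelian ideal of $\lf$, so in particular $C_\ell(\lf)^{(1)}=0$ and $C_\ell(\lf)$ is solvable. Since $\lf$ sits in the short exact sequence $0\to C_\ell(\lf)\to\lf\to\lf/C_\ell(\lf)\to 0$, which realizes $\lf$ as an extension of a solvable algebra by a solvable algebra, Proposition \ref{extsolv} delivers the solvability of $\lf$.

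The main (and only) obstacle is finding the right kernel to quotient by in the last step; once one notices that $C_\ell(\lf)$ is automatically abelian for a left Leibniz algebra, the argument collapses into a direct application of the extension property of solvable algebras. No finite-dimensionality hypothesis on $\lf$ is needed anywhere, which parallels the situation for Lie algebras.
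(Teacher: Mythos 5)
Your proof is correct and follows essentially the same route as the paper: the implications (i)$\Rightarrow$(ii)$\Rightarrow$(iii) via homomorphic images, the equivalence (iii)$\Leftrightarrow$(iv) via $L(\lf)\cong\lf/C_\ell(\lf)$, and the return to (i) via the solvability of the abelian ideal $C_\ell(\lf)$ together with Proposition \ref{extsolv}. The only cosmetic difference is that the paper proves the right-handed version and deduces the left-handed one by passing to the opposite algebra, while you argue directly on the left.
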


\begin{pro}\label{rightsolv}
For every right Leibniz algebra $\lf$ the following statements are equivalent:
\begin{enumerate}
\item[(i)]   $\lf$ is solvable.
\item[(ii)]  $\lf_\lie$ is solvable.
\item[(iii)] $\lf/C_r(\lf)$ is solvable.
\item[(iv)] $R(\lf)$ is solvable.
\end{enumerate}
\end{pro}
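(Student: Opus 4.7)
The plan is to prove Proposition \ref{rightsolv} by passing to the opposite algebra and invoking Proposition \ref{leftsolv}. Since $\lf$ is a right Leibniz algebra, $\lf^\op$ is a left Leibniz algebra, so Proposition \ref{leftsolv} applies to $\lf^\op$ and gives the equivalence of: (i$'$) $\lf^\op$ is solvable, (ii$'$) $(\lf^\op)_\lie$ is solvable, (iii$'$) $\lf^\op/C_\ell(\lf^\op)$ is solvable, (iv$'$) $L(\lf^\op)$ is solvable. It then suffices to check that each of these is equivalent to the corresponding condition for $\lf$.

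First, (i) $\Leftrightarrow$ (i$'$) since the derived series of $\lf^\op$ coincides with that of $\lf$, as noted at the end of Section 1. Second, $\leib(\lf^\op) = \leib(\lf)$ because squares in the two algebras are identical, and hence $(\lf^\op)_\lie = (\lf_\lie)^\op$; since a Lie algebra and its opposite share the same derived series, (ii) $\Leftrightarrow$ (ii$'$). Third, directly from the definitions, $C_\ell(\lf^\op) = C_r(\lf)$, so $\lf^\op/C_\ell(\lf^\op) = (\lf/C_r(\lf))^\op$, and again solvability is preserved under opposite, giving (iii) $\Leftrightarrow$ (iii$'$). Fourth, for any $a,x\in\lf$ we have $L^{\lf^\op}_a(x) = a\cdot^\op x = xa = R^\lf_a(x)$, so $L(\lf^\op) = R(\lf)$ as subspaces of $\End(\lf)$; both are Lie subalgebras of $\gl(\lf)$ under the same commutator bracket, so they are equal as Lie algebras and (iv) $\Leftrightarrow$ (iv$'$).

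This reduces the task to proving Proposition \ref{leftsolv}, which I would establish by the cycle (i) $\Rightarrow$ (ii) $\Rightarrow$ (iii) $\Leftrightarrow$ (iv) $\Rightarrow$ (i). For (i) $\Rightarrow$ (ii), $\lf_\lie$ is a homomorphic image of $\lf$, so Proposition \ref{subalghomimsolv} applies. For (ii) $\Rightarrow$ (iii), Proposition \ref{leftker} gives $\leib(\lf)\subseteq C_\ell(\lf)$, so there is a natural epimorphism $\lf_\lie \to \lf/C_\ell(\lf)$, and Proposition \ref{subalghomimsolv} applies again. The equivalence (iii) $\Leftrightarrow$ (iv) is Proposition \ref{leftmult}, which gives $\lf/C_\ell(\lf) \cong L(\lf)$ as Lie algebras. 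Finally, for (iii) $\Rightarrow$ (i), Proposition \ref{leftcen} says $C_\ell(\lf)$ is abelian, hence solvable, and by assumption $\lf/C_\ell(\lf)$ is solvable, so Proposition \ref{extsolv} forces $\lf$ to be solvable.

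The main subtlety lies in bookkeeping the opposite-algebra identifications. In particular, Proposition \ref{rightmult} asserts $\lf/C_r(\lf) \cong R(\lf)^\op$ rather than $R(\lf)$, but this difference is immaterial for solvability since a Lie algebra is solvable if and only if its opposite is; and identifying $R(\lf)$ with $L(\lf^\op)$ as Lie subalgebras of $\gl(\lf)$ requires observing that the commutator bracket does not depend on which Leibniz structure on the underlying vector space one starts from. Once these identifications are settled, the deduction of Proposition \ref{rightsolv} from Proposition \ref{leftsolv} is routine.
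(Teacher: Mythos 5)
Your proposal is correct and is essentially the paper's own argument in mirror image: the paper proves Proposition \ref{rightsolv} directly via the cycle (i)$\Rightarrow$(ii)$\Rightarrow$(iii)$\Rightarrow$(i) together with (iii)$\Leftrightarrow$(iv) (using Propositions \ref{subalghomimsolv}, \ref{rightker}, \ref{extsolv}, and \ref{rightmult}) and then deduces Proposition \ref{leftsolv} by passing to the opposite algebra, whereas you run the identical cycle for the left-handed statement and transfer it back. Your opposite-algebra bookkeeping (derived series, Leibniz kernel, centers, and the identification $L(\lf^\op)=R(\lf)$ inside $\gl(\lf)$) is all correct, so the two proofs have the same mathematical content.
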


\begin{proof}
We only prove Proposition \ref{rightsolv} as this yields Proposition \ref{leftsolv} by considering the
opposite algebra.

The implication (i)$\Rightarrow$(ii) is an immediate consequence of Proposition \ref{subalghomimsolv}.
It follows from Proposition \ref{rightker} that there is a natural epimorphism $\lf_\lie=\lf/\leib(\lf)\to
\lf/C_r(\lf)$ of Lie algebras. Hence another application of Proposition \ref{subalghomimsolv} yields
the implication (ii)$\Rightarrow$(iii). Moreover, the implication (iii)$\Rightarrow$(i) follows from
Proposition~\ref{extsolv}. Finally, the remaining equivalence of (iii) and (iv) is an immediate
consequence of Proposition \ref{rightmult}.
\end{proof}

\begin{thm}\label{solv}
For every symmetric Leibniz algebra $\lf$ the following statements are equivalent:
\begin{enumerate}
\item[(i)]   $\lf$ is solvable.
\item[(ii)]  $\lf_\lie$ is solvable.
\item[(iii)]  $\lf/C(\lf)$ is solvable.
\item[(iv)] $L(\lf)$ is solvable.
\item[(v)] $R(\lf)$ is solvable.
\item[(vi)] $\lie(\lf)$ is solvable
\end{enumerate}
\end{thm}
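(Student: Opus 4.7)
The plan is to bootstrap Theorem \ref{solv} from the two one-sided analogues (Proposition \ref{leftsolv} and Proposition \ref{rightsolv}) together with Theorem \ref{liemult} and Corollary \ref{kercen}, since a symmetric Leibniz algebra is simultaneously a left and a right Leibniz algebra.

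First, I would apply Proposition \ref{leftsolv} to $\lf$ regarded as a left Leibniz algebra, which instantly gives the equivalence of (i), (ii) and (iv), and symmetrically, Proposition \ref{rightsolv} yields the equivalence of (i), (ii) and (v). So four of the six items are already tied together for free, and only (iii) and (vi) remain to be inserted into the cycle.

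For (iii), I would use Corollary \ref{kercen}, which tells us that $\leib(\lf)\subseteq C(\lf)$, so there is a natural epimorphism $\lf_\lie=\lf/\leib(\lf)\twoheadrightarrow\lf/C(\lf)$; solvability is inherited by homomorphic images via Proposition \ref{subalghomimsolv}, giving (ii)$\Rightarrow$(iii). For the reverse, note that $C(\lf)=C_\ell(\lf)\cap C_r(\lf)$ is abelian (it is a subspace of the abelian ideal $C_\ell(\lf)$ of Proposition \ref{leftcen}, hence abelian), so if $\lf/C(\lf)$ is solvable, then Proposition \ref{extsolv} applied to the extension $0\to C(\lf)\to\lf\to\lf/C(\lf)\to 0$ yields (iii)$\Rightarrow$(i).

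For (vi), I would invoke Theorem \ref{liemult}, which for symmetric Leibniz algebras gives the decomposition $\lie(\lf)=L(\lf)+R(\lf)$ with $L(\lf)$ and $R(\lf)$ ideals of $\der(\lf)$ and hence of $\lie(\lf)$. If (iv) and (v) both hold, then $\lie(\lf)$ is the sum of two solvable ideals, and Proposition \ref{sumidsolv} yields (vi). Conversely, since $L(\lf)$ and $R(\lf)$ are subalgebras of $\lie(\lf)$, Proposition \ref{subalghomimsolv} gives (vi)$\Rightarrow$(iv) and (vi)$\Rightarrow$(v). This closes the loop, and the proof is essentially assembled entirely from results already established in the paper. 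The only mildly delicate point, and the one I would check carefully, is the verification that $C(\lf)$ really is abelian in the symmetric setting, so that Proposition \ref{extsolv} applies cleanly in the (iii)$\Rightarrow$(i) step; everything else is bookkeeping.
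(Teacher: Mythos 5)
Your proof is correct and follows essentially the same strategy as the paper: both bootstrap the equivalences (i)$\Leftrightarrow$(ii)$\Leftrightarrow$(iv)$\Leftrightarrow$(v) from Propositions \ref{leftsolv} and \ref{rightsolv}, and both handle (vi) via Theorem \ref{liemult} together with Proposition \ref{sumidsolv} (for the forward direction) and Proposition \ref{subalghomimsolv} (for the converse). The only harmless variation is in how (iii) is tied in: the paper proves (i)$\Rightarrow$(iii) by Proposition \ref{subalghomimsolv} and then (iii)$\Rightarrow$(iv), (iii)$\Rightarrow$(v) via the epimorphisms $\lf/C(\lf)\to\lf/C_\ell(\lf)\cong L(\lf)$ and $\lf/C(\lf)\to\lf/C_r(\lf)\cong R(\lf)^\op$, whereas you obtain (ii)$\Rightarrow$(iii) from Corollary \ref{kercen} and (iii)$\Rightarrow$(i) directly from the extension $0\to C(\lf)\to\lf\to\lf/C(\lf)\to 0$ using that $C(\lf)$ is an abelian ideal and Proposition \ref{extsolv}; both routes are valid.
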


\begin{proof}
The equivalence of (i) and (ii) follows from Proposition \ref{leftsolv} or Proposition~\ref{rightsolv}
and the implication (i)$\Rightarrow$(iii) is an immediate consequence of Proposition \ref{subalghomimsolv}.

Suppose now that $\lf/C(\lf)$ is solvable. Then we obtain from Proposition \ref{leftmult} that there
is a natural epimorphism $\lf/C(\lf)\to\lf/C_\ell(\lf)\cong L(\lf)$, and therefore $L(\lf)$ is solvable.
Similarly, we obtain from Proposition \ref{rightmult} that there is a natural epimorphism $\lf/C(\lf)
\to\lf/C_r(\lf)\cong R(\lf)^\op$, and thus $R(\lf)$ is solvable. This establishes the implications
(iii)$\Rightarrow$(iv) and (iii)$\Rightarrow$(v). Each of the implications (iv)$\Rightarrow$(i)
and (v)$\Rightarrow$(i) follows from Proposition \ref{leftsolv} and Proposition \ref{rightsolv},
respectively.

The implication (i)$\Rightarrow$(vi) is a consequence of Theorem \ref{liemult} and Proposition
\ref{sumidsolv} in conjunction with the already established implications. Finally, the implication
(vi)$\Rightarrow$(iv) can be obtained from Theorem \ref{liemult} and Proposition \ref{subalghomimsolv}.
\end{proof}

The Leibniz analogue of Lie's theorem for solvable Lie algebras of linear transformations was
proved by Patsourakos \cite[Theorem 2]{P2}. For the convenience of the reader we include
a proof that follows very closely the proof of Theorem \ref{ej} and uses the corresponding
result for Lie algebras.

\begin{thm}\label{lie}
Let $\lf$ be a finite-dimensional solvable left Leibniz algebra over an algebraically closed field
of characteristic zero, and let $(\lambda,\rho)$ be a representation of $\lf$ on a non-zero
finite-dimensional vector space $M$. Then $M$ contains a common eigenvector for the linear
transformations in $\lambda(\lf)\cup\rho(\lf)$.
\end{thm}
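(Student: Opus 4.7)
The plan is to mimic the proof of Theorem \ref{ej} above, substituting the classical Lie theorem for solvable Lie subalgebras of $\gl(N)$ in place of Engel's theorem. First, since $M$ is a non-zero finite-dimensional $\lf$-bimodule, I would pick an irreducible $\lf$-subbimodule $N\subseteq M$. By Theorem \ref{irrbimod}, $N$ is either symmetric or anti-symmetric, so on $N$ we have $\rho_x|_N=-\lambda_x|_N$ or $\rho_x|_N=0$ for every $x\in\lf$; in either case the right action on $N$ is completely controlled by the left action.

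Next, identity (\ref{LLMrep}) gives $\lambda_{xy}|_N=[\lambda_x|_N,\lambda_y|_N]$, so the image of the Leibniz homomorphism $x\mapsto\lambda_x|_N$ is a Lie subalgebra of $\gl(N)$. Viewing $\gl(N)$ as a Leibniz algebra via its commutator bracket, Proposition \ref{subalghomimsolv} shows that this image is a solvable Leibniz algebra; since it is already a Lie algebra, its Leibniz derived series coincides with its Lie derived series, so it is a solvable Lie subalgebra of $\gl(N)$. Because the ground field is algebraically closed of characteristic zero, the classical Lie theorem for solvable Lie algebras of linear transformations then produces a non-zero vector $m\in N\subseteq M$ that is a common eigenvector for every operator in $\lambda(\lf)|_N$.

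Finally, in the symmetric case $\rho_x(m)=-\lambda_x(m)$ is a scalar multiple of $m$, and in the anti-symmetric case $\rho_x(m)=0$, so in both cases $m$ is simultaneously an eigenvector for every $\rho_x$. Therefore $m$ is a common eigenvector for all of $\lambda(\lf)\cup\rho(\lf)$, as required. The only nontrivial step is the reduction to a Lie-algebraic problem via Theorem \ref{irrbimod}: once one has passed to an irreducible subbimodule and is in the symmetric or anti-symmetric setting, the Leibniz structure collapses onto the Lie structure and Lie's theorem is inherited directly. The rest of the argument is a routine transcription of the proof of Theorem \ref{ej}.
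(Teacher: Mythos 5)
Your proposal is correct and follows essentially the same route as the paper: pass to an irreducible subbimodule, invoke Theorem \ref{irrbimod} to reduce the right action to the left action, observe via (\ref{LLMrep}) that $\lambda(\lf)$ is a solvable Lie subalgebra of the general linear Lie algebra, and apply the classical Lie theorem. The only cosmetic difference is that the paper identifies $\lambda(\lf)\cong\lf/\ann_\lf(M)$ to deduce solvability, whereas you cite Proposition \ref{subalghomimsolv} directly for the homomorphic image; these are the same argument.
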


\begin{proof}
Since the $\lf$-bimodule $M$ is finite-dimensional, it has an irreducible $\lf$-sub\-module $N$.
We obtain from Theorem \ref{irrbimod} that $N$ is symmetric or anti-symmetric. In the former
case we have that $\rho_x=-\lambda_x$, and in the latter case we have that $\rho_x=0$ for
every $x\in\lf$. Similarly to the proof of Proposition \ref{leftmult}, one can show that $\lambda
(\lf)$ is a Lie subalgebra of $\gl(M)$ such that $\lambda(\lf)\cong\lf/\ann_\lf(M)$. The latter
isomorphism in conjunction with Proposition \ref{subalghomimsolv} yields that $\lambda(\lf)$ is
solvable. So the existence of a common eigenvector $m\in N$ for $\lambda(\lf)$ can be obtained
from Lie's theorem for solvable Lie algebras of linear transformations (see \cite[Theorem~4.1]{H}).
Finally, this and the symmetry or anti-symmetry of $N$ imply that $m$ is also a common eigenvector
for $\rho(\lf)$.
\end{proof}

We conclude this section by several applications of Theorem \ref{lie}. The first result is just a
reformulation of Theorem \ref{lie} in terms of a composition series of a Leibniz bimodule (see also
\cite[Corollary 2]{P2} and \cite[Theorem 3.2]{DMS1}).

\begin{cor}\label{triang}
Let $\lf$ be a finite-dimensional solvable left Leibniz algebra over an algebraically closed field
$\F$ of characteristic zero, and let $M$ be a non-zero finite-dimensional $\lf$-bimodule. Then the
following statements hold:
\begin{enumerate}
\item[(a)] If $M$ is irreducible, then $\dim_\F M=1$.
\item[(b)] Every composition series $$0=M_0\subsetneqq M_1\subsetneqq\cdots\subsetneqq
                M_n=M$$ of $M$ satisfies $\dim_\F M_j=j$ for any $1\le j\le n$.
\end{enumerate}
\end{cor}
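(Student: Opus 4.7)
The plan is to derive both parts directly from Theorem \ref{lie}, mirroring the structure of Corollary \ref{striang} in the nilpotent case.

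For part (a), I would start with an irreducible non-zero finite-dimensional $\lf$-bimodule $M$. Theorem \ref{lie} produces a non-zero vector $m\in M$ that is a common eigenvector for every linear transformation in $\lambda(\lf)\cup\rho(\lf)$. This means that for every $x\in\lf$, both $x\cdot m$ and $m\cdot x$ lie in $\F m$. Consequently, the one-dimensional subspace $\F m$ is an $\lf$-subbimodule of $M$. Since $M$ is irreducible and $\F m\ne 0$, we must have $M=\F m$, which gives $\dim_\F M=1$.

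For part (b), I would use that each composition factor $M_j/M_{j-1}$ of the given composition series is, by definition, a non-zero irreducible $\lf$-bimodule. It remains a finite-dimensional $\lf$-bimodule over $\F$, with $\lf$ still finite-dimensional and solvable, so part (a) applies to each quotient. Hence $\dim_\F(M_j/M_{j-1})=1$ for every $j\in\{1,\dots,n\}$. Summing dimensions (or arguing by induction on $j$ starting from $\dim_\F M_0=0$) yields $\dim_\F M_j=j$ for every $j\in\{1,\dots,n\}$.

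The only subtle point is verifying in part (a) that $\F m$ really is a subbimodule, which is immediate once we note that being a common eigenvector for all of $\lambda(\lf)$ and all of $\rho(\lf)$ means both left and right actions preserve $\F m$; there is no genuine obstacle here since the bimodule identities are automatically inherited by a subspace closed under both actions. Since this corollary is a direct repackaging of Theorem \ref{lie}, I would expect no calculations beyond these observations.
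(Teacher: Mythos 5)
Your proposal is correct and follows essentially the same route as the paper: part (a) comes from Theorem \ref{lie} (the paper cites its proof, you cite its statement plus the easy observation that the span of a common eigenvector for $\lambda(\lf)\cup\rho(\lf)$ is a one-dimensional subbimodule, which irreducibility forces to be all of $M$), and part (b) is obtained by applying (a) to each composition factor exactly as in the paper.
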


\begin{proof}
(a) is an immediate consequence of the proof of Theorem \ref{lie}.

(b): We apply part (a) to each composition factor $M_j/M_{j-1}$ ($1\le j\le n$).
\end{proof}

\noindent {\bf Remark.} Corollary \ref{triang} (and thus also Theorem \ref{lie}) is not true for
ground fields of prime characteristic as already can be seen for the non-abelian two-dimensional
Lie algebra (see \cite[Example 5.9.1]{SF}). Moreover, the one-dimensional Lie algebra spanned
by any proper rotation of a two-dimensional real vector space shows that Corollary \ref{triang}
and Theorem \ref{lie} in general hold only over algebraically closed fields.
\vspace{-.1cm}

Next, we specialize Corollary \ref{triang} to the adjoint Leibniz bimodule (see also \cite[Corollary
3.3]{DMS1}).

\begin{cor}\label{adjbimodsolv}
Let $\lf$ be a finite-dimensional solvable left Leibniz algebra over an algebraically closed field $\F$
of characteristic zero, and let $\If$ be a $d$-dimensional ideal of $\lf$. Then there exists an
ascending chain $$0=\lf_0\subsetneqq\lf_1\subsetneqq\cdots\subsetneqq\lf_n=\lf$$ of ideals of
$\lf$ such that $\If=\lf_d$ and $\dim_\F\lf_j=j$ for every $1\le j\le n$.
\end{cor}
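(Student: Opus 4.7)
The plan is to mimic the proof of Corollary \ref{adjbimodnilp}(a), replacing the appeal to Corollary \ref{striang} with an appeal to Corollary \ref{triang}. The underlying $\lf$-bimodule we consider is the adjoint bimodule $\lf_\ad$ with associated representation $(L,R)$; its sub-bimodules are precisely the two-sided ideals of $\lf$, so any composition series of $\lf_\ad$ as an $\lf$-bimodule is automatically an ascending chain of ideals of $\lf$.

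First I would observe that $\If$ is an $\lf$-subbimodule of $\lf_\ad$, since by hypothesis $\If$ is a two-sided ideal. Then, invoking the standard fact that any sub-bimodule of a finite-dimensional bimodule can be refined into a composition series (as cited in the nilpotent case via \cite[Proposition 1.1.1]{SF}), I choose a composition series
\[
0 = \lf_0 \subsetneqq \lf_1 \subsetneqq \cdots \subsetneqq \lf_n = \lf
\]
of $\lf_\ad$ passing through $\If$, say with $\lf_d = \If$. Each $\lf_j$ is a sub-bimodule of $\lf_\ad$, i.e., a two-sided ideal of $\lf$.

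Next I apply Corollary \ref{triang}(b) to the adjoint representation $(L,R)$ of $\lf$ on $\lf_\ad$: the hypotheses are satisfied since $\lf$ is finite-dimensional and solvable over an algebraically closed field of characteristic zero, and $\lf_\ad$ is a non-zero finite-dimensional $\lf$-bimodule. This forces $\dim_\F \lf_j = j$ for every $j \in \{1,\dots,n\}$, which in particular gives $\dim_\F \lf_d = d$ matching the dimension of $\If$, as required.

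I do not foresee a real obstacle here: the only thing to verify carefully is that $\If$ can indeed be inserted into a bimodule composition series of $\lf_\ad$, but this is a routine refinement argument available over any field in the finite-dimensional setting. Everything else is a direct quotation of Corollary \ref{triang}(b) combined with the identification of $\lf_\ad$-subbimodules with two-sided ideals of $\lf$.
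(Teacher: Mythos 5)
Your proposal is correct and is essentially identical to the paper's proof: the paper likewise chooses a composition series of the adjoint $\lf$-bimodule passing through $\If$ (citing \cite[Proposition 1.1.1]{SF} for the refinement) and then applies Corollary \ref{triang} to that bimodule, using the identification of subbimodules of $\lf_\ad$ with two-sided ideals of $\lf$. No gaps.
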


\begin{proof}
Choose a composition series of the adjoint $\lf$-bimodule that contains $\If$ (see \cite[Proposition
1.1.1]{SF}) and apply Corollary \ref{triang} to the adjoint $\lf$-bimodule.
\end{proof}

We can also employ Engel's theorem for left Leibniz algebras to prove the following result (see
\cite[Theorem 4]{AO1}, \cite[Corollary 3]{P2}, and \cite[Corollary 6]{G}).

\begin{cor}\label{derengel}
Let $\lf$ be a finite-dimensional solvable left Leibniz algebra over an algebraically closed field $\F$
of characteristic zero. Then $L_x$ is nilpotent for every element $x\in\lf^2$. In particular, $\lf^2$
is nilpotent.
\end{cor}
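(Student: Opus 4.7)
The plan is to combine Lie's theorem for Leibniz algebras (in the form of Corollary~\ref{adjbimodsolv}) with the commutator formula $L_{xy}=[L_x,L_y]$ from Proposition~\ref{leftmult} and then finish by invoking Engel's theorem (Corollary~\ref{engel}).

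First, apply Corollary~\ref{adjbimodsolv} to the whole algebra (taking $\If:=\lf$, or simply choosing any composition series of the adjoint $\lf$-bimodule) to obtain an ascending chain of ideals
$$0=\lf_0\subsetneqq\lf_1\subsetneqq\cdots\subsetneqq\lf_n=\lf$$
with $\dim_\F\lf_j=j$. Pick a basis of $\lf$ adapted to this flag. Since each $\lf_j$ is an ideal of $\lf$, we have $L_x(\lf_j)\subseteq\lf_j$ for every $x\in\lf$; hence the matrix of every $L_x$ in this basis is upper triangular.

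Now let $x\in\lf^2$. By bilinearity and the definition of $\lf^2$, $x$ is a finite sum of products $y_iz_i$ with $y_i,z_i\in\lf$. The identity $L_{yz}=[L_y,L_z]$ established in the proof of Proposition~\ref{leftmult} gives
$$L_x=\sum_i L_{y_iz_i}=\sum_i[L_{y_i},L_{z_i}].$$
Each summand is the commutator of two upper triangular matrices with respect to our adapted basis, hence is strictly upper triangular; therefore $L_x$ is strictly upper triangular and thus nilpotent as an operator on $\lf$. This establishes the first assertion.

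For the second assertion, note that $\lf^2=\lf^{(1)}$ is an ideal of $\lf$ by Proposition~\ref{derser}, and it is a (left or right) Leibniz algebra in its own right. For any $x\in\lf^2$, the restriction of $L_x$ to the invariant subspace $\lf^2$ is nilpotent, so the left multiplication operator of $\lf^2$ by $x$ is nilpotent. Engel's theorem for left Leibniz algebras (Corollary~\ref{engel}) then yields that $\lf^2$ is nilpotent. The only non-routine point in the whole argument is remembering to pass from upper triangularity of individual $L_y,L_z$ to strict upper triangularity of their commutator; everything else is a bookkeeping application of results already proved in the paper.
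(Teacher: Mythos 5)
Your proof is correct and follows essentially the same route as the paper: triangularize all of $L(\lf)$ using the flag of ideals from Corollary~\ref{adjbimodsolv}, use $L_{yz}=[L_y,L_z]$ from the proof of Proposition~\ref{leftmult} to conclude that $L(\lf^2)$ consists of strictly upper triangular (hence nilpotent) operators, and finish with Corollary~\ref{engel}. Your treatment of the last step is in fact slightly more careful than the paper's, since you explicitly note that the left multiplication operators of the algebra $\lf^2$ are the restrictions of the $L_x$ to the invariant subspace $\lf^2$.
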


\begin{proof}
According to Corollary \ref{adjbimodsolv}, there exists an ascending chain $$0=\lf_0\subsetneqq
\lf_1\subsetneqq\cdots\subsetneqq\lf_n=\lf$$ of ideals of $\lf$ such that $\dim_\F\lf_j=j$ for every
$1\le j\le n$. Hence one can choose successively a basis $\{x_1,\dots,x_j\}$ of $\lf_j$ ($1\le j\le
n$) such that the corresponding matrices of $L(\lf)$ are upper triangular. By virtue of the proof of
Proposition \ref{leftmult}, we have that $L(\lf^2)=[L(\lf),L(\lf)]$, and thus the matrices of $L(\lf^2)$
are strictly upper triangular. Hence $L_x$ is nilpotent for every element $x\in\lf^2$. In particular,
we obtain from Corollary \ref{engel} that $\lf^2$ is nilpotent.
\end{proof}

Finally, we give a proof of Cartan's solvability criterion for left Leibniz algebras (see also \cite[Theorem
3.7]{AAO2} and \cite[Theorem 3.5]{DMS1}).

\begin{thm}\label{cartan}
Let $\lf$ be a finite-dimensional left Leibniz algebra over a field of characteristic zero. Then $\lf$
is solvable if, and only if, $\kappa(x,y)=0$ for every element $x\in\lf$ and every element $y\in
\lf^2$.
\end{thm}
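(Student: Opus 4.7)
The plan is to reduce the statement to the classical Cartan solvability criterion for Lie subalgebras of $\gl(V)$, applied to the Lie subalgebra $L(\lf)$ of $\gl(\lf)$. Two ingredients from earlier in the paper will do all the work. First, by Proposition \ref{leftsolv}, $\lf$ is solvable if and only if $L(\lf)$ is solvable. Second, the proof of Proposition \ref{leftmult} gives the identity $L_{xy}=[L_x,L_y]$ for all $x,y\in\lf$, whence $[L(\lf),L(\lf)]=L(\lf^2)$. Combining this with the tautological identity $\kappa(x,y)=\tr(L_x\circ L_y)$ shows that the Killing form of $\lf$ paired on $\lf\times\lf^2$ is nothing more than the ordinary trace form of $L(\lf)\subseteq\gl(\lf)$ paired on $L(\lf)\times[L(\lf),L(\lf)]$.

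For the forward direction, suppose $\lf$ is solvable. Then $L(\lf)$ is a solvable Lie subalgebra of $\gl(\lf)$ acting on the finite-dimensional vector space $\lf$, so the classical Cartan criterion for Lie algebras of linear transformations (valid over any field of characteristic zero) gives $\tr(A\circ B)=0$ for every $A\in L(\lf)$ and every $B\in[L(\lf),L(\lf)]$. Using $[L(\lf),L(\lf)]=L(\lf^2)$, this reads $\tr(L_x\circ L_y)=0$ for all $x\in\lf$ and $y\in\lf^2$, that is, $\kappa(x,y)=0$. For the backward direction, assume $\kappa(x,y)=0$ for all $x\in\lf$ and $y\in\lf^2$. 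Then the trace form of $L(\lf)\subseteq\gl(\lf)$ vanishes on $L(\lf)\times[L(\lf),L(\lf)]$, and the classical Cartan criterion in its converse form yields that $L(\lf)$ is solvable. A final application of Proposition \ref{leftsolv} shows that $\lf$ is solvable.

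There is essentially no hard step once one makes the translation $\kappa(x,y)=\tr(L_x\circ L_y)$ and notices that $L(\lf)$ inherits a faithful finite-dimensional representation on $\lf$; the only thing one has to be a little careful about is to invoke the version of Cartan's criterion for linear Lie algebras over an arbitrary field of characteristic zero, rather than its corollary for the Killing form of an abstract Lie algebra, since no algebraic closure assumption is available. The identification $[L(\lf),L(\lf)]=L(\lf^2)$, which is the Leibniz-algebraic content of the reduction, is already contained in the proof of Proposition \ref{leftmult} and requires no further argument.
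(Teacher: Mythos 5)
Your proof is correct, and its reduction to the linear Lie algebra $L(\lf)\subseteq\gl(\lf)$ via $\kappa(x,y)=\tr(L_x\circ L_y)$, $L_{xy}=[L_x,L_y]$, and Proposition \ref{leftsolv} is exactly the skeleton of the paper's argument; in particular your backward (hard) direction coincides with the paper's. Where you diverge is in the forward direction and in the treatment of the ground field. The paper proves the forward direction by first assuming the field algebraically closed, triangularizing $L(\lf)$ by means of its own Lie's theorem for Leibniz algebras (Corollary \ref{adjbimodsolv}), observing that $L_{yz}=[L_y,L_z]$ is then strictly upper triangular, and finally removing the algebraic-closure hypothesis by an explicit base-field extension $\overline{\lf}=\lf\otimes_\F\overline{\F}$, checking that both solvability and the vanishing of $\kappa$ on $\lf\times\lf^2$ are preserved. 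You instead apply the easy direction of the classical Cartan criterion for linear Lie algebras directly to the solvable algebra $L(\lf)$, using Proposition \ref{leftsolv} symmetrically in both directions; this is shorter and more uniform, and it avoids any appeal to the Leibniz version of Lie's theorem. The price is that you must cite the if-and-only-if form of Cartan's criterion valid over an arbitrary field of characteristic zero (as in Bourbaki), rather than the statement in Humphreys, which carries a standing algebraic-closure hypothesis -- you flag this correctly, but be aware that the paper's explicit scalar-extension paragraph is precisely the content you are delegating to that stronger citation. Both routes are valid.
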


\begin{proof}
Suppose first that $\lf$ is solvable and the ground field of $\lf$ is algebraically closed. It follows
from Corollary \ref{adjbimodsolv} that $L(\lf)$ can be simultaneously representated by upper
triangular matrices. Then the proof of Proposition \ref{leftmult} shows that the corresponding
matrix of $L_{yz}=[L_y,L_z]=L_y\circ L_z-L_z\circ L_y$ is a strictly upper triangular matrix for
any $y,z\in\lf$. Hence $\kappa(x,yz)=\tr(L_x\circ L_{yz})=0$ for any $x,y,z\in\lf$.

Suppose now that $\kappa(x,y)=0$ for every element $x\in\lf$ and every element $y\in\lf^2$,
and the ground field of $\lf$ is again algebraically closed. It follows from Proposition~\ref{leftmult}
that $L(\lf)$ is a Lie subalgebra of $\gl(\lf)$. In particular, we obtain as before that $\tr(L_x\circ
[L_y,L_z])=\tr(L_x\circ L_{yz})=\kappa(x,yz)=0$ for any $x,y,z\in\lf$. So the other implication
is a consequence of \cite[Theorem~4.3]{H} in conjunction with Proposition~\ref{leftsolv}.

Finally, in case the ground field $\F$ of $\lf$ is not algebraically closed, a base field extension
will show the assertion. Namely, let $\overline{\F}$ be an algebraic closure of $\F$, set
$\overline{\lf}:=\lf\otimes_\F\overline{\F}$, and let $\overline{L}_{a\otimes\alpha}(b\otimes
\beta):=ab\otimes\alpha\beta$ for any $a,b\in\lf$ and any $\alpha,\beta\in\overline{\F}$
denote the left multiplication operator of $\overline{\lf}$. Since $\overline{\lf}\hspace{.3mm}^2
=\lf^2\otimes_\F\overline{\F}$, we obtain by induction that $\overline{\lf}\hspace{.3mm}^{(n)}
=\lf^{(n)}\otimes_\F\overline{\F}$ for every non-negative integer $n$. Consequently, $\lf$
is solvable if, and only if, $\overline{\lf}$ is solvable. Moreover, we obtain for the Killing form
$\overline{\kappa}$ of $\overline{\lf}$ that $\overline{\kappa}(x\otimes 1,y\otimes 1)=\tr
(\overline{L}_{x\otimes 1}\circ\overline{L}_{y\otimes 1})=\tr(L_x\circ L_y)=\kappa(x,y)$
for all $x,y\in\lf$ as $\overline{L}_{a\otimes\alpha}(b\otimes\beta)=L_a(b)\otimes\alpha
\beta$ for any $a,b\in\lf$ and any $\alpha,\beta\in\overline{\F}$. This can be used to show
that $\kappa(x,y)=0$ for every element $x\in\lf$ and every element $y\in\lf^2$ if, and only
if, $\overline{\kappa}(\overline{x},\overline{y})=0$ for every element $\overline{x}\in\overline{\lf}$
and every element $\overline{y}\in\overline{\lf}\hspace{.3mm}^2$.
\end{proof}


\section{Semisimple Leibniz algebras}


A left or right Leibniz algebra $\lf$ is called {\em simple\/} if $0$, $\leib(\lf)$, $\lf$ are the
only ideals of $\lf$, and $\leib(\lf)\subsetneqq\lf^2$ (see \cite[Definition 1]{DA} for the
first part of this definition or \cite[Definition 2.2]{ORT}, \cite[Definition 5.1]{DMS1},
\cite[Definition 2.6]{FM}). Note that there is also another definition of simplicity for Leibniz
algebras in the literature, namely, requiring that $0$ and $\lf$ are the only ideals of $\lf$.
Then it follows from Proposition~\ref{ker} that every simple left or right Leibniz algebra has
to be a Lie algebra.

Exactly as for Lie algebras, we call a left or right Leibniz algebra $\lf$ {\em perfect\/} in case
$\lf=\lf^2$ holds. Then the first result in this section is an immediate consequence of these
definitions.

\begin{pro}\label{simperfect}
Every simple left or right Leibniz algebra is perfect. 
\end{pro}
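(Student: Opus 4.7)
The plan is very short: simplicity tells us that $\lf^2$ must be one of only three ideals, and the second simplicity hypothesis rules out two of them.

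First, I would invoke Proposition \ref{derser} to observe that $\lf^2 = \lf^{(1)}$ is an ideal of $\lf$ (this works for both left and right Leibniz algebras). Since $\lf$ is simple, the only ideals are $0$, $\leib(\lf)$, and $\lf$, so
\[
\lf^2 \in \{0,\ \leib(\lf),\ \lf\}.
\]

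Next, I would use the second part of the definition of simplicity, namely $\leib(\lf) \subsetneqq \lf^2$. This strict inclusion immediately rules out $\lf^2 = \leib(\lf)$, and since $0 \subseteq \leib(\lf)$, it also rules out $\lf^2 = 0$. Hence $\lf^2 = \lf$, which is exactly the definition of $\lf$ being perfect.

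There is no real obstacle here; the entire argument is a one-line consequence of combining the ideal-property of $\lf^2$ with the two defining conditions of simplicity, and the proof is identical for left and right Leibniz algebras (no passage to the opposite algebra is needed).
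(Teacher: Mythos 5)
Your argument is correct and is exactly the reasoning the paper has in mind: it presents this proposition as an immediate consequence of the definitions, and the intended justification is precisely that $\lf^2$ is an ideal (Proposition \ref{derser}), so simplicity forces $\lf^2\in\{0,\leib(\lf),\lf\}$, while $\leib(\lf)\subsetneqq\lf^2$ eliminates the first two options. Nothing is missing.
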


The next result follows from the correspondence theorem for ideals. Note also that the condition
$\leib(\lf)\subsetneqq\lf^2$ implies that the canonical Lie algebra $\lf_\lie$ associated to a simple
left or right Leibniz algebra $\lf$ is not abelian. 

\begin{pro}\label{sim}
If $\lf$ is a simple left or right Leibniz algebra, then $\lf_\lie$ is a simple Lie algebra.
\end{pro}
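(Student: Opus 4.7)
The plan is to use the correspondence theorem for ideals applied to the canonical projection $\pi : \lf \to \lf/\leib(\lf) = \lf_\lie$, together with the hypothesis $\leib(\lf) \subsetneqq \lf^2$ to handle non-abelianness. By Proposition \ref{minlie} (or directly by the definition of $\leib(\lf)$), $\lf_\lie$ is indeed a Lie algebra, so the task reduces to showing (a) $\lf_\lie$ has no ideals other than $0$ and itself, and (b) $\lf_\lie$ is not abelian.

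First I would handle (a). By the correspondence theorem for ideals, the ideals of $\lf_\lie$ are in bijection with the ideals of $\lf$ that contain $\leib(\lf)$. By the definition of a simple left or right Leibniz algebra, the only ideals of $\lf$ are $0$, $\leib(\lf)$, and $\lf$ itself; the chain $0 \subseteq \leib(\lf) \subseteq \lf$ shows that exactly two of these contain $\leib(\lf)$, namely $\leib(\lf)$ and $\lf$. Under the correspondence, these map to $0$ and $\lf_\lie$, respectively, so $\lf_\lie$ has no proper non-zero ideals.

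Next I would handle (b). Since $\pi$ is a surjective homomorphism of algebras, $\pi(\lf^2) = \pi(\lf)^2 = (\lf_\lie)^2$. By the simplicity hypothesis, $\leib(\lf) \subsetneqq \lf^2$, so $\lf^2 / \leib(\lf) = (\lf_\lie)^2$ is non-zero; in particular $\lf_\lie$ is not abelian. Combining this with (a) yields that $\lf_\lie$ is a simple Lie algebra.

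I do not anticipate any serious obstacle: the proof is essentially a direct application of the correspondence theorem, packaged together with the observation that the extra condition $\leib(\lf)\subsetneqq\lf^2$ built into the definition of simplicity is precisely what rules out the abelian case. The only minor subtlety is to remember that Lie-algebra simplicity requires the non-abelian hypothesis, which is exactly what the $\leib(\lf)\subsetneqq\lf^2$ clause supplies.
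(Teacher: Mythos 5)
Your proof is correct and follows exactly the route the paper indicates: the paper derives the statement from the correspondence theorem for ideals and separately notes that $\leib(\lf)\subsetneqq\lf^2$ rules out the abelian case, which are precisely your parts (a) and (b). You have simply written out the details that the paper leaves as a sketch.
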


\noindent We will see in the example after Theorem \ref{strucsemisim} that the converse of Proposition
\ref{sim} does not always hold.

In analogy with the above definition of simplicity, we call a left or right Leibniz algebra $\lf$ {\em
semisimple\/} when $\leib(\lf)$ contains every solvable ideal of $\lf$.

\begin{pro}\label{simsemisim}
Every simple left or right Leibniz algebra is semisimple.
\end{pro}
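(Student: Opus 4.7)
The plan is to use the very restrictive ideal structure forced by simplicity together with the perfectness established in Proposition \ref{simperfect}. Let $\lf$ be a simple left or right Leibniz algebra, and let $\If$ be an arbitrary solvable ideal of $\lf$. By the definition of simplicity, the only ideals of $\lf$ are $0$, $\leib(\lf)$, and $\lf$, so it suffices to rule out the case $\If = \lf$; the other two cases give $\If \subseteq \leib(\lf)$ immediately.

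To rule out $\If = \lf$, I would argue by contradiction: if $\lf$ itself were solvable, then $\lf^{(r)} = 0$ for some $r$. On the other hand, Proposition \ref{simperfect} gives $\lf = \lf^2 = \lf^{(1)}$, and a straightforward induction on $n$ using $\lf^{(n+1)} = (\lf^{(n)})^{(1)} = (\lf^{(n)})^2$ shows $\lf^{(n)} = \lf$ for every non-negative integer $n$. Combined with $\lf^{(r)} = 0$ this forces $\lf = 0$, but the simplicity condition $\leib(\lf) \subsetneqq \lf^2$ implies $\lf^2 \neq 0$ and hence $\lf \neq 0$, a contradiction.

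Therefore no solvable ideal can equal $\lf$, so every solvable ideal $\If$ satisfies $\If \in \{0, \leib(\lf)\}$, and in either case $\If \subseteq \leib(\lf)$. Hence $\lf$ is semisimple. No serious obstacle is expected here — the proof is a short bookkeeping argument once Proposition \ref{simperfect} and the strict inclusion $\leib(\lf) \subsetneqq \lf^2$ built into the definition of simplicity are invoked.
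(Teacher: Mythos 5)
Your proof is correct and follows essentially the same route as the paper's: reduce to the case $\If=\lf$, use Proposition \ref{simperfect} and induction to get $\lf^{(n)}=\lf$ for all $n$, and derive $\lf=0$, contradicting $\leib(\lf)\subsetneqq\lf^2$. No issues.
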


\begin{proof}
Let $\If$ be any solvable ideal of the simple left or right Leibniz algebra $\lf$. Then either
$\If=0$, $\If=\leib(\lf)$, or $\If=\lf$. In the first two cases we have that $\If\subseteq
\leib(\lf)$, and we are done. So suppose that $\If=\lf$. From Proposition~\ref{simperfect}
we obtain by induction that $\lf=\lf^{(n)}$ holds for every non-negative integer $n$. Since
by hypothesis $\If$ is solvable, there exists a non-negative integer $r$ such that $\If^{(r)}
=0$. Hence $\lf=\lf^{(r)}=\If^{(r)}=0$ which contradicts the requirement $\leib(\lf)
\subsetneqq\lf^2$.
\end{proof}

In the finite-dimensional case we have the following result which often is taken as the definition
of semisimplicity for Leibniz algebras (see \cite[Definition 5.2]{DMS1} or \cite[Definition 2.12]{FM}).

\begin{pro}\label{semisimrad}
A finite-dimensional left or right Leibniz algebra $\lf$ is semisimple if, and only if, $\leib(\lf)=
\rad(\lf)$.
\end{pro}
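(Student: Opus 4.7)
The proof should be essentially a one-line unpacking of the definitions combined with two facts already established in the paper: that $\leib(\lf)$ is an abelian (hence solvable) ideal by Proposition \ref{ker}, and that $\rad(\lf)$ is by construction the largest solvable ideal of the finite-dimensional algebra $\lf$.

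For the forward direction, I would assume $\lf$ is semisimple, i.e., every solvable ideal of $\lf$ is contained in $\leib(\lf)$. Applying this to $\If = \rad(\lf)$, which is a solvable ideal, gives $\rad(\lf) \subseteq \leib(\lf)$. For the reverse inclusion, $\leib(\lf)$ is itself a solvable ideal of $\lf$ (Proposition \ref{ker} says it is abelian, so $\leib(\lf)^{(1)} = 0$), and hence $\leib(\lf) \subseteq \rad(\lf)$ by maximality of the radical. Combining the two inclusions yields $\leib(\lf) = \rad(\lf)$.

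For the backward direction, I would assume $\leib(\lf) = \rad(\lf)$ and let $\If$ be an arbitrary solvable ideal of $\lf$. By the defining property of the radical, $\If \subseteq \rad(\lf)$, and by hypothesis $\rad(\lf) = \leib(\lf)$, so $\If \subseteq \leib(\lf)$. This is precisely the definition of semisimplicity.

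There is no real obstacle here; the only subtlety worth flagging is that the argument makes genuine use of finite-dimensionality, since the existence of a largest solvable ideal $\rad(\lf)$ is established in the paper only in the finite-dimensional setting (via Proposition \ref{sumidsolv} and a dimension argument). Both directions are independent of whether $\lf$ is left or right Leibniz, since Proposition \ref{ker} covers both cases uniformly.
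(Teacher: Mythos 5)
Your proof is correct and follows exactly the route the paper takes: both directions rest on the inclusion $\leib(\lf)\subseteq\rad(\lf)$ obtained from Proposition \ref{ker} and Proposition \ref{rad}, together with the maximality of the radical among solvable ideals. Your remark about finite-dimensionality being needed for the existence of $\rad(\lf)$ is an accurate and worthwhile observation, but otherwise there is nothing to add.
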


\begin{proof}
It follows from Proposition \ref{ker} and Proposition \ref{rad} that $\leib(\lf)\subseteq\rad(\lf)$.
This in conjunction with the semisimplicity of $\lf$ proves the ``only if"-part of the assertion, and
the converse follows from Proposition \ref{rad} which says that $\rad(\lf)$ is the largest solvable
ideal of $\lf$.
\end{proof}

\begin{pro}\label{semisimleftcen}
If $\lf$ is a semisimple left Leibniz algebra, then $\leib(\lf)=C_\ell(\lf)$. 
\end{pro}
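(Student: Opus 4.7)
The plan is to prove the inclusion $C_\ell(\lf) \subseteq \leib(\lf)$, since the reverse inclusion $\leib(\lf) \subseteq C_\ell(\lf)$ has already been established in Proposition \ref{leftker}. So the proof reduces to showing that the left center sits inside the Leibniz kernel whenever $\lf$ is semisimple.

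First, I would invoke Proposition \ref{leftcen} to conclude that $C_\ell(\lf)$ is an abelian ideal of $\lf$. Since every abelian algebra satisfies $\af^{(1)} = 0$, it is solvable, and therefore $C_\ell(\lf)$ is in particular a solvable ideal of $\lf$.

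Now I would apply the defining property of semisimplicity: because $\lf$ is semisimple, every solvable ideal of $\lf$ is contained in $\leib(\lf)$. Applying this to the solvable ideal $C_\ell(\lf)$ yields $C_\ell(\lf) \subseteq \leib(\lf)$. Combining this with $\leib(\lf) \subseteq C_\ell(\lf)$ from Proposition \ref{leftker} gives the claimed equality.

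There is really no main obstacle here; the statement is a direct and essentially one-line consequence of Proposition \ref{leftcen}, Proposition \ref{leftker}, and the definition of semisimplicity. The only subtlety worth noting is that the definition of semisimple used here is the one stated before Proposition \ref{simsemisim} (namely, that $\leib(\lf)$ contains every solvable ideal), which is exactly the property we need; no appeal to finite-dimensionality or to Proposition \ref{semisimrad} is required.
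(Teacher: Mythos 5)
Your proof is correct and is essentially the paper's argument: the paper proves the right-handed version (using Propositions \ref{rightker} and \ref{rightcen} together with the definition of semisimplicity) and deduces the left-handed statement by passing to the opposite algebra, whereas you run the identical argument directly on the left side. The substance — abelian ideal, hence solvable, hence contained in $\leib(\lf)$ by semisimplicity — is the same.
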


\begin{pro}\label{semisimrightcen}
If $\lf$ is a semisimple right Leibniz algebra, then $\leib(\lf)=C_r(\lf)$. 
\end{pro}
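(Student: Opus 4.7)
The plan is to prove the two inclusions $\leib(\lf) \subseteq C_r(\lf)$ and $C_r(\lf) \subseteq \leib(\lf)$ separately, and both should come essentially for free from results already established.

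First, the inclusion $\leib(\lf) \subseteq C_r(\lf)$ is immediate from Proposition \ref{rightker}, which asserts exactly that the Leibniz kernel of a right Leibniz algebra sits inside the right center. This direction uses only the right Leibniz identity and makes no appeal to semisimplicity.

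For the reverse inclusion $C_r(\lf) \subseteq \leib(\lf)$, the strategy is to exhibit $C_r(\lf)$ as a solvable ideal and then invoke semisimplicity. By Proposition \ref{rightcen}, $C_r(\lf)$ is an abelian ideal of $\lf$ (we have $C_r(\lf)\lf \subseteq C_r(\lf)$ and $\lf C_r(\lf)=0$, so in particular $C_r(\lf)\cdot C_r(\lf)=0$). Since any abelian algebra is solvable, $C_r(\lf)$ is a solvable ideal of $\lf$. By the definition of semisimplicity used in this section, $\leib(\lf)$ contains every solvable ideal of $\lf$, and therefore $C_r(\lf) \subseteq \leib(\lf)$.

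Combining the two inclusions yields the equality $\leib(\lf) = C_r(\lf)$. I do not expect any real obstacle here: the statement is a quick consequence of the abelianness of the right center (Proposition \ref{rightcen}) together with the definition of semisimplicity, and the argument exactly mirrors the one for Proposition \ref{semisimleftcen}, which in fact could alternatively be deduced from Proposition \ref{semisimrightcen} by passing to the opposite algebra.
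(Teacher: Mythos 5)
Your proof is correct and follows the same route as the paper: $\leib(\lf)\subseteq C_r(\lf)$ from Proposition \ref{rightker}, and $C_r(\lf)\subseteq\leib(\lf)$ because $C_r(\lf)$ is an abelian (hence solvable) ideal by Proposition \ref{rightcen} and semisimplicity forces every solvable ideal into $\leib(\lf)$. Your closing remark about deducing the left-center version by passing to the opposite algebra is exactly how the paper organizes the two statements as well.
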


\begin{cor}\label{semisimcen}
If $\lf$ is a semisimple symmetric Leibniz algebra, then $\leib(\lf)=C(\lf)$. 
\end{cor}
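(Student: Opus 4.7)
The plan is to observe that Corollary \ref{semisimcen} is essentially the conjunction of Proposition \ref{semisimleftcen} and Proposition \ref{semisimrightcen}, together with the definition $C(\lf) := C_\ell(\lf) \cap C_r(\lf)$. The only preliminary point to verify is that a symmetric Leibniz algebra which is semisimple (in the sense introduced before the corollary) is semisimple both as a left Leibniz algebra and as a right Leibniz algebra. But the notion of semisimplicity here depends only on the ideal structure of $\lf$ and on $\leib(\lf)$, which are intrinsic to $\lf$ and do not change when one switches between the left and right points of view; so this is immediate.

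Once that is in hand, the argument is a one-liner. First I would invoke Proposition \ref{semisimleftcen} to conclude $\leib(\lf) = C_\ell(\lf)$, and then Proposition \ref{semisimrightcen} to conclude $\leib(\lf) = C_r(\lf)$. Intersecting these two equalities yields
\[
\leib(\lf) \;=\; C_\ell(\lf) \cap C_r(\lf) \;=\; C(\lf),
\]
which is exactly the claim. Alternatively, the inclusion $\leib(\lf) \subseteq C(\lf)$ is already recorded as Corollary \ref{kercen}, and the reverse inclusion $C(\lf) \subseteq C_\ell(\lf) = \leib(\lf)$ follows from either one of the two preceding propositions, so the two directions can be separated if one prefers.

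There is no real obstacle here; the result is a formal corollary of the two preceding propositions, and the only thing worth pointing out in the write-up is the observation in the first paragraph that the semisimplicity hypothesis on a symmetric Leibniz algebra passes to both its left and right Leibniz algebra structures, which is what allows us to apply both Proposition \ref{semisimleftcen} and Proposition \ref{semisimrightcen} simultaneously.
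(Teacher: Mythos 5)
Your proof is correct and follows essentially the same route as the paper, which derives the corollary by intersecting the conclusions of Proposition \ref{semisimleftcen} and Proposition \ref{semisimrightcen} (the paper proves only the right-center version and gets the left one via the opposite algebra). Your preliminary remark that semisimplicity is intrinsic to the ideal structure and the Leibniz kernel, hence applies simultaneously to the left and right Leibniz structures of a symmetric Leibniz algebra, is accurate and harmless.
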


\begin{proof}
We only prove Proposition \ref{semisimrightcen} as this yields Proposition \ref{semisimleftcen}
by considering the opposite algebra.

It follows from Proposition \ref{rightker} that $\leib(\lf)\subseteq C_r(\lf)$. According to
Proposition~\ref{rightcen}, $C_r(\lf)$ is an abelian ideal of $\lf$, and thus the semisimplicity
of $\lf$ yields that $C_r(\lf)\subseteq\leib(\lf)$.
\end{proof}

\begin{pro}\label{semisim}
A left or right Leibniz algebra $\lf$ is semisimple if, and only if, $\mathfrak{L}_\mathrm{Lie}$
is semisimple.
\end{pro}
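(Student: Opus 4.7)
The plan is to exploit the correspondence between ideals of $\lf$ containing $\leib(\lf)$ and ideals of $\lf_\lie=\lf/\leib(\lf)$, and to feed this correspondence with the two structural facts about solvability already established: Proposition \ref{subalghomimsolv} (homomorphic images of solvable algebras are solvable) and Proposition \ref{extsolv} (extensions of solvable by solvable are solvable). I would first observe that the definition of semisimplicity applied to the Lie algebra $\lf_\lie$ simply says that $\lf_\lie$ has no nonzero solvable ideal, since $\leib(\lf_\lie)=0$. So the statement to prove is that every solvable ideal of $\lf$ lies in $\leib(\lf)$ if and only if every solvable ideal of $\lf_\lie$ is zero.

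For the ``only if'' direction, I would start with a solvable ideal $\bar\If$ of $\lf_\lie$ and lift it through the correspondence theorem to an ideal $\If$ of $\lf$ with $\leib(\lf)\subseteq\If$ and $\If/\leib(\lf)=\bar\If$. Since $\leib(\lf)$ is abelian by Proposition \ref{ker}, hence solvable, and $\If/\leib(\lf)$ is solvable by assumption, Proposition \ref{extsolv} forces $\If$ to be solvable. Semisimplicity of $\lf$ then gives $\If\subseteq\leib(\lf)$, so $\bar\If=0$.

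For the ``if'' direction, let $\If$ be any solvable ideal of $\lf$ and let $\pi\colon\lf\to\lf_\lie$ be the canonical projection. Then $\pi(\If)=(\If+\leib(\lf))/\leib(\lf)$ is an ideal of $\lf_\lie$, and it is solvable by Proposition \ref{subalghomimsolv}. Semisimplicity of $\lf_\lie$ yields $\pi(\If)=0$, that is $\If\subseteq\leib(\lf)$, so $\lf$ is semisimple.

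There is no real obstacle here; the entire argument is a bookkeeping exercise once one notes the two observations that $\leib(\lf)$ itself is solvable and that $\leib(\lf_\lie)=0$. The only conceptual point worth emphasizing is that the proof does not require finite-dimensionality because it avoids appealing to the radical; it uses directly the author's ideal-theoretic definition of semisimplicity together with the behaviour of the derived series under quotients and extensions.
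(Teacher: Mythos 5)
Your proof is correct and follows essentially the same route as the paper: both directions use the correspondence/projection between ideals of $\lf$ and of $\lf_\lie$, with Proposition \ref{extsolv} (applied to the solvable, indeed abelian, kernel $\leib(\lf)$) for one direction and Proposition \ref{subalghomimsolv} for the other. Your explicit remark that $\leib(\lf_\lie)=0$ reduces Lie-semisimplicity to having no nonzero solvable ideal is a helpful clarification that the paper leaves implicit, but the argument is otherwise the same.
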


\begin{proof}
Suppose that $\lf$ is semisimple, and let $\IIf$ be any solvable ideal of the Lie algebra $\lf_\lie
=\lf/\leib(\lf)$. Let $\pi:\lf\to\lf_\lie$ denote the natural epimorphism of Leibniz algebras. Then
$\If:=\pi^{-1}(\IIf)$ is an ideal of $\lf$, and it follows from Proposition~\ref{extsolv} applied
to $\IIf=\If+\leib(\lf)/\leib(\lf)\cong\If/\If\cap\leib(\lf)$ that $\If$ is solvable. Hence the
semisimplicity of $\lf$ yields that $\If\subseteq\leib(\lf)$, i.e., $\IIf=0$. Consequently,
$\lf_\lie$ is semisimple.

In order to prove the converse, suppose that $\lf_\lie=\lf/\leib(\lf)$ is semisimple, and let
$\If$ be any solvable ideal of the left or right Leibniz algebra $\lf$. Then by Proposition
\ref{subalghomimsolv} we obtain that $\If+\leib(\lf)/\leib(\lf)$ is a solvable ideal of
$\lf_\lie$. Since the latter Lie algebra is semisimple, we have that $\If+\leib(\lf)/\leib(\lf)
=0$, and thus $\If\subseteq\leib(\lf)$. Hence $\lf$ is semisimple.
\end{proof}

Since finite-dimensional semisimple Lie algebras over a field of characteristic zero are perfect,
it follows from Proposition \ref{semisim} in conjunction with Corollary \ref{triv} that the same
is true for Leibniz algebras (see \cite[Corollary 5.5]{DMS1}).

\begin{cor}\label{semisimperfect}
Every finite-dimensional semisimple left Leibniz algebra over a field of characteristic zero is
perfect.
\end{cor}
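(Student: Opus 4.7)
The plan is to use exactly the two ingredients advertised in the sentence preceding the corollary, namely Proposition~\ref{semisim} and Corollary~\ref{triv}, together with the classical fact that a finite-dimensional semisimple Lie algebra in characteristic zero is perfect.

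First I would let $\lf$ be a finite-dimensional semisimple left Leibniz algebra over a field $\F$ of characteristic zero. By Proposition~\ref{semisim}, the canonical Lie algebra $\lf_\lie=\lf/\leib(\lf)$ is semisimple. Since $\lf$ is finite dimensional, so is $\lf_\lie$, and hence by the classical structure theory of semisimple Lie algebras in characteristic zero (a result I am free to quote) we have $\lf_\lie=\lf_\lie^2$, i.e., $\lf_\lie/\lf_\lie^2=0$.

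Next I would invoke Corollary~\ref{triv}, which supplies the isomorphism $\lf/\lf^2\cong\lf_\lie/\lf_\lie^2$. Combining this with the previous step gives $\lf/\lf^2=0$, i.e., $\lf=\lf^2$, which is exactly the assertion that $\lf$ is perfect.

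There is essentially no obstacle: the work has already been done in Proposition~\ref{semisim} (reducing semisimplicity of $\lf$ to semisimplicity of $\lf_\lie$) and in Corollary~\ref{triv} (which transfers the abelianization from $\lf$ to $\lf_\lie$). The only external input is the standard fact that finite-dimensional semisimple Lie algebras in characteristic zero are perfect, and the proof therefore reduces to a two-line deduction stringing these together.
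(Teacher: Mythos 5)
Your proof is correct and follows exactly the route the paper intends: Proposition~\ref{semisim} to get semisimplicity of $\lf_\lie$, the classical fact that finite-dimensional semisimple Lie algebras in characteristic zero are perfect, and the isomorphism $\lf/\lf^2\cong\lf_\lie/\lf_\lie^2$ from Corollary~\ref{triv} to transfer perfectness back to $\lf$. No gaps.
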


Similar to the Killing form of a finite-dimensional semisimple Lie algebra over a field of characteristic
zero being non-degenerate, the Killing form of a finite-dimensional semisimple Leibniz algebra over
a field of characteristic zero is minimally degenerate (see \cite[Theorem 5.8]{DMS1}). As for Lie
algebras, this can be obtained from Cartan's solvability criterion for Leibniz algebras (see Theorem
\ref{cartan}).

\begin{pro}\label{mindeg}
The Killing form of every finite-dimensional semisimple left Leibniz algebra over a field of characteristic
zero is minimally degenerate.
\end{pro}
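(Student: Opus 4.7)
The plan is to show the reverse inclusion to the one already provided by Lemma \ref{leftrad}. By that lemma we have $\leib(\lf)\subseteq\lf^\perp$ and $\lf^\perp$ is an ideal of $\lf$. So it suffices to prove that $\lf^\perp\subseteq\leib(\lf)$, and by Proposition \ref{semisimrad} it is enough to show that $\lf^\perp$ is a solvable ideal of $\lf$, since then $\lf^\perp\subseteq\rad(\lf)=\leib(\lf)$.

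To establish the solvability of $\lf^\perp$ I would apply Cartan's solvability criterion (Theorem \ref{cartan}) to the left Leibniz algebra $\lf^\perp$. This requires the ``restriction lemma'' for trace forms: if $\If$ is any ideal of a finite-dimensional left Leibniz algebra $\lf$, and $\kappa_\If$ denotes the Killing form of $\If$ itself, then $\kappa_\If(x,y)=\kappa(x,y)$ for all $x,y\in\If$. The argument is standard: for $x,y\in\If$, the operator $L_y$ sends $\lf$ into $\If$ (because $\If$ is a left ideal), hence so does $L_x\circ L_y$; choosing a basis of $\lf$ that extends a basis of $\If$ puts $L_x\circ L_y$ into block-upper-triangular form with a zero block on the quotient $\lf/\If$, so $\tr(L_x\circ L_y)=\tr((L_x\circ L_y)|_\If)=\kappa_\If(x,y)$.

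Applying this with $\If:=\lf^\perp$, for every $x\in\lf^\perp$ and every $y\in(\lf^\perp)^2\subseteq\lf^\perp\subseteq\lf$ we get
\[
\kappa_{\lf^\perp}(x,y)=\kappa(x,y)=0
\]
by definition of $\lf^\perp$. Theorem \ref{cartan} then yields that $\lf^\perp$ is solvable, and combining with the semisimplicity of $\lf$ gives $\lf^\perp\subseteq\rad(\lf)=\leib(\lf)$, completing the proof.

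The main obstacle is the restriction lemma for the Killing form, which is the Leibniz analogue of the well-known fact that the Killing form of an ideal of a Lie algebra agrees with the restriction of the ambient Killing form. The verification is essentially a bookkeeping argument about left multiplication operators, but it is the one place where the two-sided nature of Leibniz multiplication could in principle cause trouble; the key observation that makes it go through is that only left multiplications enter into the definition of $\kappa$, and left multiplication by an element of an ideal preserves that ideal.
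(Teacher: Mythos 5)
Your proposal is correct and follows essentially the same route as the paper: the paper likewise reduces the statement to the solvability of $\lf^\perp$ via Cartan's criterion (Theorem \ref{cartan}), using the restriction identity $\kappa_{\If}=\kappa_{\vert\If\times\If}$ for the ideal $\If=\lf^\perp$ (which it obtains by citing the proof of Humphreys' Lemma 5.1 rather than writing out the block-triangular trace computation as you do), and then concludes from semisimplicity and Lemma \ref{leftrad}. The only slip is the parenthetical justification that $L_y(\lf)=y\lf\subseteq\If$ for $y\in\If$ ``because $\If$ is a left ideal'': this containment is the \emph{right}-ideal property $\If\lf\subseteq\If$, which is harmless here since $\lf^\perp$ is a two-sided ideal by Lemma \ref{leftrad}.
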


\begin{proof}
Let $\lf$ be a finite-dimensional semisimple left Leibniz algebra over a field of characteristic $0$,
and let $\If:=\lf^\perp$ denote the radical of the Killing form $\kappa$ of $\lf$. Recall that $\If$
is an ideal of $\lf$. The proof of \cite[Lemma 5.1]{H} shows that $\kappa_\If=\kappa_{\vert\If
\times\If}$, where $\kappa_\If$ denotes the Killing form of $\If$. Then we have that $\kappa_\If
(x,y)=\kappa(x,y)=0$ for every $x\in\If$ and every $y\in\If^2$. Hence Theorem \ref{cartan}
shows that $\If$ is solvable. Since $\lf$ is semisimple, we obtain that $\lf^\perp=\If\subseteq
\leib(\lf)$, and thus it follows from Lemma \ref{leftrad} that $\lf^\perp=\leib(\lf)$, i.e., $\kappa$
is minimally degenerate.
\end{proof}
\vspace{-.1cm}

\noindent {\bf Remark.} The example after Lemma \ref{leftrad} shows that contrary to
Lie algebras, where the non-degeneracy of the Killing form implies semisimplicity, the
converse of Proposition \ref{mindeg} is not true.
\vspace{.2cm}

Next, we give some insight into the structure of a finite-dimensional semisimple Leibniz algebra
in characteristic zero that can be obtained from the analogue of Levi's theorem for Leibniz
algebras and is due to Pirashvili \cite[Proposition 2.4]{P} and Barnes \cite[Theorem 1]{B3}.
The first part of Theorem \ref{strucsemisim} was already observed by Fialowski and Mih\'alka
\cite[Corollary 2.14]{FM} and the third part is \cite[Theorem~3.1]{GVKO}.

\begin{thm}\label{strucsemisim}
If $\lf$ is a finite-dimensional semisimple left Leibniz algebra over a field of characteristic zero,
then there exists a semisimple Lie subalgebra $\ssf$ of $\lf$ such that $\lf=\ssf\oplus\leib(\lf)$
and $\leib(\lf)$ is an anti-symmetric completely reducible $\ssf$-bimodule.\footnote{More precisely,
$\lf$ is the hemi-semidirect product of $\ssf$ and the $\ssf$-bimodule $\leib(\lf)$.} Moreover, if
$\lf$ is simple, then $\ssf$ is simple and $\leib(\lf)$ is irreducible.
\end{thm}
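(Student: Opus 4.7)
The plan is to reduce the statement to Levi's theorem for Leibniz algebras (due to Pirashvili and Barnes, cited in the paragraph preceding the statement) together with standard facts about Leibniz bimodules already established.

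First I would invoke Levi's theorem for Leibniz algebras to obtain a semisimple Lie subalgebra $\ssf$ of $\lf$ such that $\lf=\ssf\oplus\rad(\lf)$ as vector spaces. By Proposition~\ref{semisimrad}, the semisimplicity of $\lf$ gives $\rad(\lf)=\leib(\lf)$, yielding the decomposition $\lf=\ssf\oplus\leib(\lf)$. This immediately handles the first part.

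Next I would show that $\leib(\lf)$, viewed as an $\ssf$-bimodule via restriction of the adjoint bimodule $\lf_\ad$, is anti-symmetric. For any $m\in\leib(\lf)$ and $x\in\ssf\subseteq\lf$, the right action is the Leibniz product $m\cdot x=mx$; but Proposition~\ref{leftker} gives $\leib(\lf)\subseteq C_\ell(\lf)$, so $L_m=0$ and hence $mx=0$. Thus the right $\ssf$-action on $\leib(\lf)$ is identically zero, which is exactly anti-symmetry. Complete reducibility of the bimodule then reduces to complete reducibility of $\leib(\lf)$ as a left $\ssf$-module, since an anti-symmetric bimodule over a Lie subalgebra decomposes iff its underlying left module does (cf.\ Proposition~\ref{leftmod}\,(a)). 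Because $\ssf$ is a finite-dimensional semisimple Lie algebra over a field of characteristic zero, Weyl's theorem on complete reducibility applies to the finite-dimensional module $\leib(\lf)$, finishing the first claim.

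For the simple case, I note that if $\lf$ is simple then $\lf_\lie$ is a simple Lie algebra by Proposition~\ref{sim}; but $\ssf\cong\lf/\leib(\lf)=\lf_\lie$, so $\ssf$ is simple. To show $\leib(\lf)$ is irreducible as an $\ssf$-bimodule, let $0\ne M\subseteq\leib(\lf)$ be an $\ssf$-subbimodule. I claim $M$ is automatically an ideal of $\lf$: writing an arbitrary $x\in\lf$ as $x=s+k$ with $s\in\ssf$, $k\in\leib(\lf)$, one has $xM=sM+kM\subseteq M+\leib(\lf)\cdot\leib(\lf)=M$ (the last equality using $\leib(\lf)\subseteq C_\ell(\lf)$), and likewise $Mx=Ms+Mk\subseteq M+C_\ell(\lf)\cdot\lf=M$. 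Since $\lf$ is simple and $0\ne M\subsetneq\lf$, we must have $M=\leib(\lf)$.

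The only non-routine ingredient is Levi's theorem for Leibniz algebras itself; the remaining obstacles are minor verifications (the anti-symmetry calculation and the ideal-closure argument for irreducibility), both of which follow immediately from $\leib(\lf)\subseteq C_\ell(\lf)$.
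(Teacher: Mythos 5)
Your proposal is correct and follows essentially the same route as the paper: Levi's theorem for Leibniz algebras plus Proposition~\ref{semisimrad} for the decomposition, the inclusion $\leib(\lf)\subseteq C_\ell(\lf)$ from Proposition~\ref{leftker} for anti-symmetry, Weyl's theorem for complete reducibility, and Proposition~\ref{sim} together with an ideal argument for the simple case. The only (welcome) difference is that you verify explicitly that an $\ssf$-subbimodule of $\leib(\lf)$ is an ideal of $\lf$, a step the paper leaves implicit by working with $\lf$-subbimodules and then passing to $\ssf$ via Proposition~\ref{ker}.
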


\begin{proof}
The existence of the semisimple Lie subalgebra $\ssf$ with $\lf=\ssf\oplus\leib(\lf)$ is an immediate
consequence of Levi's theorem for Leibniz algebras and Proposition~\ref{semisimrad}. It follows
from Proposition \ref{leftker} that $\leib(\lf)$ is an ideal of $\lf$ such that $\leib(\lf)\lf=0$. Hence
$\leib(\lf)$ is an anti-symmetric $\lf$-bimodule. In particular, $\leib(\lf)$ is also an anti-symmetric
$\ssf$-bimodule. Finally, Weyl's theorem (see \cite[Theorem 6.3]{H}) yields that $\leib(\lf)$ is a
completely reducible $\ssf$-bimodule.

Suppose now that $\lf$ is simple. Then we obtain from Proposition \ref{sim} that $\ssf\cong\lf
/\leib(\lf)=\lf_\lie$ is also simple. Let $M$ be a non-zero proper $\lf$-subbimodule of $\leib(\lf)$.
Then $M$ is an ideal of $\lf$ that is different from $0$, $\leib(\lf)$, and $\lf$, which contradicts
the simplicity of $\lf$. Hence $\leib(\lf)$ is an irreducible $\lf$-bimodule. By virtue of Proposition
\ref{ker}, $\leib(\lf)$ is also an irreducible $\ssf$-bimodule.
\end{proof}

A left or right Leibniz algebra $\lf$ is called {\em Lie-simple\/} if $\lf_\lie$ is simple. According to
Proposition \ref{simsemisim} and Proposition \ref{semisim}, Lie-simple left or right Leibniz algebras
are semisimple, but they are not always simple.
\vspace{.2cm}

\noindent {\bf Example} (see also \cite[Example 5.3]{DMS1}){\bf .} Let $\ssf$ be any simple
Lie algebra, and let $M$ and $M^\prime$ be two non-trivial irreducible left $\ssf$-modules.
Then consider the left Leibniz algebra $\lf:=\ssf\times[M\oplus M^\prime]$ with multiplication
$(x,a)(y,b):=(xy,x\cdot b)$ for any $x,y\in\ssf$ and any $a,b\in M\oplus M^\prime$ (see also
Example 3 in Section 2). The identity $(x,a)(x,a)=(0,x\cdot a)$ for any $x\in\ssf$ and any
$a\in M\oplus M^\prime$ shows that $\leib(\lf)\subseteq M\oplus M^\prime$. Since $M$ and
$M^\prime$ are non-trivial irreducible left $\ssf$-modules, we have that $\ssf M=M$ and
$\ssf M^\prime=M^\prime$. This in conjunction with the above identity shows the other
inclusion, and therefore $\leib(\lf)=M\oplus M^\prime$. Hence $\lf_\lie=\lf/\leib(\lf)\cong
\ssf$ is simple, and thus $\lf$ is Lie-simple. But as $M$ and $M^\prime$ are ideals of $\lf$
that are different from $0$, $\leib(\lf)$, and $\lf$, we conclude that $\lf$ is not simple.
\vspace{.2cm}

\noindent {\bf Remark.} The same argument as in the previous example in conjunction with
Proposition~\ref{semisim} proves that $\lf=\ssf\times M$ with multiplication $(x,a)(y,b):=
(xy,x\cdot b)$ for any $x,y\in\ssf$ and any $a,b\in M$, where $\ssf$ is a semisimple Lie
algebra and $M$ is a direct sum of non-trivial irreducible left $\ssf$-modules, is a semisimple
left Leibniz algebra. Note also that there exist semisimple left Leibniz algebras that cannot
be decomposed as a direct product of (Lie-)simple left Leibniz algebras (see \cite[Examples
2 and 3]{GVKO}).
\vspace{.2cm}

We obtain as an immediate consequence of Proposition \ref{semisim} in conjunction with
Corollary \ref{asoutder}\,(a) and the first Whitehead lemma for Lie algebras (see \cite[Theorem
13 in Chapter III]{J}) the corresponding result for symmetric bimodules over semisimple
Leibniz algebras.

\begin{thm}\label{1stwhiteheadlem}
If $\lf$ is a finite-dimensional semisimple left Leibniz algebra over a field of characteristic zero,
then $\HL^1(\lf,M)=0$ holds for every symmetric finite-dimensional $\lf$-bimodule $M$.
\end{thm}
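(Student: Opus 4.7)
The plan is to reduce this statement directly to the classical first Whitehead lemma for Lie algebras via the machinery already developed in the paper. The key observation is that both hypotheses (semisimplicity of $\lf$ and symmetry of $M$) have been shown earlier to transfer cleanly to the associated Lie algebra $\lf_\lie$ and its representation theory.

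First, I would invoke Proposition \ref{semisim}, which tells us that the semisimplicity of the left Leibniz algebra $\lf$ is equivalent to the semisimplicity of its canonical Lie algebra $\lf_\lie = \lf/\leib(\lf)$. Since the ground field has characteristic zero and $\lf$ is finite-dimensional, $\lf_\lie$ is then a finite-dimensional semisimple Lie algebra in characteristic zero, so the classical first Whitehead lemma (e.g., \cite[Theorem 13 in Chapter III]{J}) applies to it.

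Next, I would use the symmetry of the $\lf$-bimodule $M$. By Lemma \ref{ann}, the Leibniz kernel $\leib(\lf)$ annihilates $M$ on both sides, so the left $\lf$-action on $M$ factors through $\lf_\lie$, giving $M$ the structure of a (finite-dimensional) $\lf_\lie$-module. At this point Corollary \ref{asoutder}(a) is exactly the bridge we need: it provides the isomorphism
\[
\HL^1(\lf, M) \cong \HL^1(\lf_\lie, M) = \HCE^1(\lf_\lie, M).
\]
Combining this with the classical first Whitehead lemma applied to the semisimple Lie algebra $\lf_\lie$ and the $\lf_\lie$-module $M$ yields $\HCE^1(\lf_\lie, M) = 0$, and therefore $\HL^1(\lf, M) = 0$.

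Since all the technical work (the identification of Leibniz 1-cohomology with Chevalley--Eilenberg 1-cohomology in the symmetric case, and the equivalence of semisimplicity for $\lf$ and $\lf_\lie$) has been carried out in earlier sections, there is no real obstacle here; the argument is essentially a one-line concatenation of Proposition \ref{semisim}, Corollary \ref{asoutder}(a), and the classical Lie-theoretic Whitehead lemma. The only thing to be mildly careful about is checking that $M$ is indeed a genuine $\lf_\lie$-module (which requires the symmetry hypothesis, since Lemma \ref{ann} needs $M$ to be symmetric or anti-symmetric), but this is immediate from the setup.
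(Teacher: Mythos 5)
Your proof is correct and follows exactly the route the paper takes: the theorem is stated there as an immediate consequence of Proposition \ref{semisim}, Corollary \ref{asoutder}\,(a), and the classical first Whitehead lemma for Lie algebras. Your additional remark that Lemma \ref{ann} makes $M$ a genuine $\lf_\lie$-module is a reasonable elaboration of what Corollary \ref{asoutder}\,(a) already encapsulates.
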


\noindent {\bf Remark.}
Note that it follows from Corollary \ref {asadj} that Theorem \ref{1stwhiteheadlem} does not
hold for anti-symmetric bimodules over semisimple Leibniz algebras.
\vspace{.2cm}


\noindent {\bf Acknowledgments.} The author would like to thank Friedrich Wagemann for
reading a preliminary version of this paper and for making many very valuable suggestions.
The author is also grateful to Bakhrom Omirov for finding a mistake in the proof of the second
Whitehead lemma for Leibniz algebras which was contained in a previous version of this paper.



\end{document}